\numberwithin{equation}{section}
\newtheorem{Thm}[equation]{Theorem}
\newtheorem{Prop}[equation]{Proposition}
\newtheorem{Lem}[equation]{Lemma}
\newtheorem{Cor}[equation]{Corollary}
\theoremstyle{remark}
\newtheorem{Rem}[equation]{Remark}
\newtheorem{Def}[equation]{Definition}
\newtheorem{Not}[equation]{Notation}
\newtheorem{Exa}[equation]{Example}
\newtheorem{Exas}[equation]{Examples}
\newtheorem{Conv}[equation]{Convention}
\newtheorem{Hyp}[equation]{Hypotheses}
\newtheorem{Rec}[equation]{Recollection}
\newcommand{\nc}{\newcommand}
\nc{\dmo}{\DeclareMathOperator}
\dmo{\Ab}{Ab}
\dmo{\add}{add} 
\dmo{\Aut}{Aut}
\dmo{\bicMack}{\biMack_{\mathsf{ic}}} 
\dmo{\biMack}{\mathsf{Mack}} 
\dmo{\Ch}{Ch}
\dmo{\CoInd}{CoInd}
\dmo{\Der}{D}
\dmo{\Db}{D^b}
\dmo{\End}{End}
\dmo{\Fun}{\mathrm{Fun}} 
\dmo{\Hom}{Hom}
\dmo{\Ho}{Ho}
\dmo{\img}{im}
\dmo{\incl}{incl}
\dmo{\Ind}{Ind}
\dmo{\inj}{in} 
\dmo{\Inj}{Inj} 
\dmo{\Ker}{Ker}
\dmo{\Mackey}{Mack} 
\dmo{\Map}{Map}%
\dmo{\Mod}{Mod}
\dmo{\Qcoh}{Qcoh}
\dmo{\coh}{coh}
\dmo{\fgmod}{mod}
\dmo{\stmod}{stmod}
\dmo{\StMod}{StMod}
\dmo{\Mor}{Mor}%
\dmo{\Obj}{Obj}
\dmo{\Proj}{Proj} 
\dmo{\fgproj}{proj} 
\dmo{\pr}{pr}
\dmo{\PsFunJJ}{\PsFun_{\JJ_!}^{\JJ^\prime\textrm{\!-}\mathsf{oplax}}}
\dmo{\PsFunJop}{\PsFun_{{{\JJ}_{{}_{*}}}}}
\dmo{\PsFunJ}{\PsFun_{\JJ_!}}
\dmo{\PsFunoplax}{\PsFun^{\mathsf{oplax}}}
\dmo{\PsFun}{\mathsf{PsFun}} 
\dmo{\Rad}{Rad}
\dmo{\Res}{Res}
\dmo{\SH}{SH}
\dmo{\Sh}{Sh}
\dmo{\Spanname}{{\sf Span}}
\dmo{\Spec}{Spec}
\dmo{\Stab}{Stab}
\dmo{\twoFun}{2\mathsf{Fun}}
\dmo{\tr}{tr}
\nc{\ababs}{{\sl ab absurdo}}
\nc{\Add}{\mathsf{Add}}
\nc{\ADD}{\mathsf{ADD}}
\nc{\ADDic}{\mathsf{ADD}_{\mathsf{ic}}}
\nc{\adhoc}{{\sl ad hoc}}
\nc{\adjto}{\rightleftarrows}
\nc{\adj}{\dashv\,}
\nc{\afortiori}{{\sl a fortiori}}
\nc{\aka}{{a.\,k.\,a.}\ }
\nc{\all}{\mathsf{all}}
\nc{\apriori}{{\sl a priori}}
\nc{\ass}{\mathrm{ass}} 
\nc{\bbA}{\mathbb{A}}
\nc{\bbB}{\mathbb{B}}
\nc{\bbC}{\mathbb{C}}
\nc{\bbD}{\mathbb{D}}
\nc{\bbF}{\mathbb{F}}
\nc{\bbI}{\mathbb{I}}
\nc{\bbM}{\mathbb{M}}
\nc{\bbN}{\mathbb{N}}
\nc{\bbP}{\mathbb{P}}
\nc{\bbQ}{\mathbb{Q}}
\nc{\bbR}{\mathbb{R}}
\nc{\bbZ}{\mathbb{Z}}
\nc{\bs}{\backslash}
\nc{\BurnG}{\cat{A}(G)}
\nc{\cat}[1]{\mathcal{#1}}
\nc{\Cat}{\mathsf{Cat}}
\nc{\CAT}{\mathsf{CAT}}
\nc{\cf}{{\sl cf.}\ }
\nc{\Cf}{{\sl Cf.}\ }
\nc{\colim}{\mathop{\mathrm{colim}}}
\nc{\costar}{**}
\nc{\co}{{\mathrm{co}}}
\nc{\DD}{\cat{D}}
\nc{\Displ}{\displaystyle}
\nc{\diag}[1]{\overline{#1}} 
\nc{\offdiag}[1]{{#1}^\dagger} 
\nc{\doublequot}[3]{#1\backslash #2/#3}
\nc{\Ecell}{\rotatebox[origin=c]{90}{$\Downarrow$}} 
\nc{\eg}{{\sl e.g.}\ } 
\nc{\Eg}{{\sl E.g.}\ } 
\nc{\eps}{\varepsilon}
\nc{\equalby}[1]{\overset{\textrm{#1}}{=}}
\nc{\exact}{\mathsf{ex}}
\nc{\faithful}{\mathsf{faithful}}
\nc{\faith}{\mathsf{faithf}}
\nc{\final}{\textrm{\scriptsize{\ding{93}}}} 
\nc{\Funadd}{\Fun_{\amalg}}
\nc{\Funplus}{\Fun_{+}}
\nc{\fun}{\mathrm{fun}} 
\nc{\GG}{\mathbb{G}}
\nc{\gpdG}{{\groupoidf_{\!\smallslash\!G}}} 
\nc{\gpdGzero}{{\groupoidf_{\!\smallslash\!G_0}}\!} 
\nc{\gpdfover}[1]{\groupoidf_{\!\smallslash\!#1}}
\nc{\gpd}{\groupoid}%
\nc{\gps}{\mathsf{groups}} 
\nc{\groconn}{\groupoid_{\mathsf{conn}}}
\nc{\groupoidf}{\groupoid{}^{\smallfaithful}}
\nc{\groupoid}{\mathsf{gpd}}
\nc{\group}{\mathsf{group}} 
\nc{\Gsets}{G\sset}
\nc{\HGfK}{\doublequot{H}{G}{f(K)}}%
\nc{\HGK}{\doublequot HGK}
\nc{\Homcat}[1]{\Hom_{\cat #1}}
\nc{\hooklongleftarrow}{\longleftarrow\joinrel\rhook}
\nc{\hooklongrightarrow}{\lhook\joinrel\longrightarrow}
\nc{\hook}{\hookrightarrow}
\nc{\Hsets}{H\mathsf{-sets}}
\nc{\ICAdd}{\Add_{\mathsf{ic}}}%
\nc{\ICADD}{\ADD_{\mathsf{ic}}}%
\nc{\Idcat}[1]{\Id_{\cat{#1}}}
\nc{\id}{\mathrm{id}}
\nc{\Id}{\mathrm{Id}}
\nc{\ie}{{\sl i.e.}\ }
\nc{\into}{\mathop{\rightarrowtail}}
\nc{\inv}{^{-1}}
\nc{\Iout}[1]{\Ivo{\sout{#1}}}
\nc{\isocell}[1]{\undersett{ #1}{\overset{\sim}{\Ecell}}} 
\nc{\backisocell}[1]{\undersett{ #1}{\overset{\sim}{\Wcell}}} 
\nc{\Isocell}[1]{\undersett{ #1}{\overset{\sim}{\Longrightarrow}}}
\nc{\isoEcell}{\overset{\sim}{\Rightarrow}} 
\nc{\isotoo}{\stackrel{\sim}\longrightarrow}
\nc{\isoto}{\buildrel \sim\over\to}
\nc{\Ivo}[1]{{\color{OliveGreen}#1}}
\nc{\JJ}{\mathbb{J}}
\nc{\kk}{\Bbbk}
\nc{\KK}{\mathrm{KK}}
\nc{\leps}{{}^{\ell}\eps}
\nc{\leta}{{}^{\ell}\eta}
\nc{\loccit}{{\sl loc.\ cit.}}
\nc{\lotoo}[1]{\overset{#1}{\,\longleftarrow\,}}
\nc{\loto}[1]{\overset{#1}{\leftarrow}}
\nc{\lto}{\leftarrow}
\nc{\lun}{\mathrm{lun}} 
\nc{\Mackintro}[1]{(Mack\,\ref{Mack-#1-intro})}
\nc{\Mack}[1]{(Mack\,\ref{Mack-#1})}
\nc{\Mid}{\,\big|\,}
\nc{\MMod}{\,\text{-}\Mod}%
\dmo{\mods}{mod}%
\nc{\mmods}{\,\text{-}\mathrm{mod}}%
\nc{\MM}{\cat{M}}
\nc{\Muniv}{\cat{M}_{\mathsf{univ}}}
\nc{\Ncell}{\rotatebox[origin=c]{0}{$\Uparrow$}} 
\nc{\NEcell}{\rotatebox[origin=c]{135}{$\Downarrow$}} 
\nc{\NN}{\cat{N}}
\nc{\noloc}{\nobreak\mspace{6mu plus 1mu}{:}\nonscript\mkern-\thinmuskip\mathpunct{}\mspace{2mu}}
\nc{\NWcell}{\rotatebox[origin=c]{-135}{$\Downarrow$}} 
\nc{\oEcell}[1]{\overset{\scriptstyle #1}{\Ecell}} 
\nc{\oWcell}[1]{\overset{\scriptstyle #1}{\Wcell}} 
\nc{\ointo}[1]{\overset{#1}{\rightarrowtail}}
\nc{\olto}[1]{\overset{#1}\lto}
\nc{\onto}{\mathop{\twoheadrightarrow}}
\nc{\op}{{\mathrm{op}}}
\nc{\xto}[1]{\xrightarrow{#1}}
\nc{\oto}[1]{\overset{#1}\to}
\nc{\Paul}[1]{{\color{Blue}#1}}
\nc{\pih}[1]{\tau_{1}#1}%
\nc{\Pout}[1]{\Paul{\sout{#1}}}
\nc{\PsFunJindex}{\PsFun_{{\JJ_!}} \ \ {{\JJ}_{!}}\textrm{-strong pseudo-functors}}
\nc{\qquadtext}[1]{\qquad\textrm{#1}\qquad}
\nc{\quadtext}[1]{\quad\textrm{#1}\quad}
\nc{\ra}{\rightarrow}
\nc{\reps}{{}^{r\!}\eps}
\nc{\restr}[1]{{|_{\scriptstyle #1}}}
\nc{\reta}{{}^{r\!}\eta}
\nc{\run}{\mathrm{run}} 
\nc{\Sad}{\mathsf{Sad}}
\nc{\SAD}{\mathsf{SAD}}
\nc{\sbull}{{\scriptscriptstyle\bullet}}
\nc{\Scell}{\rotatebox[origin=c]{0}{$\Downarrow$}} 
\nc{\SEcell}{\rotatebox[origin=c]{45}{$\Downarrow$}} 
\nc{\SET}[2]{\big\{\,#1\Mid#2\,\big\}}
\nc{\set}{\mathsf{set}} 
\nc{\Set}{\mathsf{Set}}
\nc{\smallfaithful}{\mathsf{f}}
\nc{\smallslash}{{}^{\scriptscriptstyle/}}
\nc{\smat}[1]{\left(\begin{smallmatrix} #1 \end{smallmatrix}\right)}
\nc{\spanG}{{\widehat{\mathsf{gp}\,\,}\!\!\mathsf{d}}{}^\smallfaithful_{\!{}^{\scriptscriptstyle/}\!G}}
\nc{\Spanhat}{\textrm{\sf S}\widehat{\textrm{\sf pan}}} %
\nc{\Span}{\Spanname}
\nc{\sset}{\textrm{-}\set}
\nc{\str}{\mathsf{str}}
\nc{\SWcell}{\rotatebox[origin=c]{-45}{$\Downarrow$}} 
\nc{\too}{\mathop{\longrightarrow}\limits}
\nc{\tristars}{\begin{center} $ *\;*\;* $ \end{center}}
\nc{\tSpan}{\pih{\Spanname}}
\nc{\undersett}[1]{\underset{\scriptstyle #1}}
\nc{\un}{\mathrm{un}} 
\nc{\vcorrect}[1]{{\vphantom{\vbox to #1em{}}}}
\nc{\Wcell}{\rotatebox[origin=c]{90}{$\Uparrow$}} 
\nc{\what}[1]{\widehat{\cat{#1}}}
\nc{\xra}{\xrightarrow}
\nc{\xBur}{\mathrm{B^c}} 
\nc{\xBurk}{ \mathrm{B}^{\mathrm{c}}_{\kk} } 
\nc{\Bur}{\mathrm{B}} 
\nc{\Burk}{\Bur_{\kk}} 
\nc{\isoTo}{\overset{\sim}{\Rightarrow}}
\nc{\isoc}[3]{#1\,{\diamond}_{_{\!#3}}#2}
\nc{\Isoc}[3]{(\isoc{#1}{#2}{#3})}
\begin{document}


\title{Green equivalences in equivariant mathematics}

\author{Paul Balmer}
\author{Ivo Dell'Ambrogio}
\date{\today}

\address{\ \vfill
\noindent PB: UCLA Mathematics Department, Los Angeles, CA 90095-1555, USA}
\email{balmer@math.ucla.edu}
\urladdr{http://www.math.ucla.edu/$\sim$balmer}

\address{\ \medbreak
\noindent ID: Univ.\ Lille, CNRS, UMR 8524 - Laboratoire Paul Painlev\'e, F-59000 Lille, France}
\email{ivo.dell-ambrogio@univ-lille.fr}
\urladdr{http://math.univ-lille1.fr/$\sim$dellambr}

\begin{abstract} \normalsize
We establish Green equivalences for all Mackey 2-functors, without assuming Krull-Schmidt.
By running through the examples of Mackey 2-functors, we recover all variants of the Green equivalence and Green correspondence known in representation theory and obtain new ones in several other contexts. Such applications include equivariant stable homotopy theory in topology and equivariant sheaves in geometry.
\end{abstract}

\thanks{First-named author supported by NSF grant~DMS-1901696. Second-named author supported by Project ANR ChroK (ANR-16-CE40-0003) and Labex CEMPI (ANR-11-LABX-0007-01).}

\subjclass[2010]{20J05, 18B40, 55P91}
\keywords{Mackey 2-functor, Green correspondence and equivalence.}

\maketitle


\tableofcontents

\vskip-\baselineskip\vskip-\baselineskip\vskip-\baselineskip
%
\section{Introduction}
\label{sec:introduction}%

The \emph{Green correspondence} \cite{Green59,Green64} is one of the fundamental and most useful results in modular representation theory of finite groups.
In its simplest form, it says that if~$\kk$ is a field of characteristic~$p$ and if $D\leq H\leq G$ are finite groups such that $D$ is a $p$-group and $H$ contains the normalizer~$N_G(D)$ of~$D$, then the induction and restriction functors yield a bijection between the isomorphism classes of indecomposable $\kk$-linear representations of~$G$ with vertex~$D$ and the isomorphism classes of indecomposable representations of~$H$ with same vertex~$D$. The \emph{vertex} of a representation $M$ is the smallest subgroup $D$ such that $M$ is a retract of a representation induced from~$D$. The Green correspondence reduces many questions to `$p$-local' representation theory, \cf \cite{Alperin80}.

Green~\cite{Green72} later showed that his correspondence follows easily by tracking indecomposable objects through what is now called the \emph{Green equivalence}
\begin{equation} \label{eq:intro-Greeneq}
\Ind_H^G \colon \frac{\mods(\kk H ; D)}{\mods(\kk H; \mathfrak{X})}
\isotoo
\frac{\mods(\kk G ; D)}{\mods(\kk G; \mathfrak{X})}
\end{equation}
an equivalence of categories between additive subquotients of the module categories; see \Cref{Rec:add-quotient}. Here $\mods(\kk G;D)\subseteq \mods(\kk G)$ is the full subcategory of all retracts of finitely-generated $\kk G$-modules induced up from~$D$, and similarly for $\mods(\kk G;\mathfrak{X})$ by allowing induction from all subgroups in the family
\begin{equation}
\label{eq:X}%
\mathfrak{X}=\mathfrak{X}(G,H,D):=\{ D \cap {}^{g\!} D \mid g\in G\smallsetminus H\}
\end{equation}
(where of course ${}^{g\!}D=gDg\inv$). Although Green's proof still used the Krull-Schmidt property of the categories of finite-dimensional modules, the statement of the Green equivalence~\eqref{eq:intro-Greeneq} makes sense more generally, \eg for infinite-dimensional representations. Indeed, the result was eventually extended to that case in~\cite{BensonWheeler01}. The recent preprint \cite{CarlsonWangZhang20pp} further extends the Green equivalence and correspondence to various derived categories of chain complexes of representations.

\tristars

In this paper we show that, in fact, the Green equivalence is not specific to $\kk$-linear representation theory. It is a general fact about `equivariant mathematics', a necessary consequence of nothing more than having induction and restriction satisfying some basic adjunction and Mackey-style relations, as commonly found throughout mathematics. Let us explain this idea.

Fix a finite group~$G$. To obtain a Green equivalence, we only need a \emph{Mackey 2-functor for~$G$} in the sense of~\cite{BalmerDellAmbrogio18pp}. This algebraic gadget consists of an additive category $\cat M(H)$ for each subgroup $H\leq G$, together with `induction' and `restriction' functors $\Ind_K^H \colon \cat M(K) \leftrightarrows \cat M(H) \,:\Res^H_K $ for all $K\leq H\leq G$, as well as conjugation functors and conjugation natural isomorphisms; this structure satisfies natural relations, most notably induction and restriction are adjoint on both sides and satisfy a suitably categorified version of the Mackey formula; see \Cref{sec:recollections} for details. None of those very general relations are mysterious and they have been in common use long before they were given the name `Mackey 2-functor' in~\cite{BalmerDellAmbrogio18pp}.

For any such Mackey 2-functor $\MM$ and for any subgroups $K\le H$ we may define
\[
\MM(H;K):= \add (\Ind^H_K(\MM(K))) \subseteq \MM(H)\,,
\]
the \emph{category of $K$-objects} in~$\MM(H)$, namely the full subcategory  of $\MM(H)$ containing all retracts of images of the induction functor~$\Ind_K^H$ (\cf \Cref{Def:D-object}). Similarly for~$\mathfrak{X}$-objects: $\MM(H;\mathfrak{X})=\add\big(\cup_{K\in \mathfrak{X}}\MM(H;K)\big)$.
Our general Green equivalence looks as follows:
\begin{Thm} [\Cref{Thm:Green-equivalence}]
\label{Thm:intro-Green-eq}%
For any Mackey 2-functor $\cat M$ for~$G$ (\Cref{Rec:2-Mackey}) and any subgroups $D\le H\le G$, induction $\Ind_H^G$ yields an equivalence
\begin{equation*}
\bigg(\frac{\MM(H ; D)}{\MM(H; \mathfrak{X})}\bigg)^\natural
\isotoo
\bigg(\frac{\MM(G ; D)}{\MM(G; \mathfrak{X})}\bigg)^\natural
\end{equation*}
between the idempotent-completions of the additive quotient categories of $D$-objects in $\MM(H)$ and~$\MM(G)$, by those of $\mathfrak{X}$-objects where $\mathfrak{X}=\{ D \cap {}^{g\!} D \mid g\in G\smallsetminus H\}$.
\end{Thm}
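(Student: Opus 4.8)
The plan is to mimic the classical proof of the Green equivalence but at the level of a Mackey 2-functor, isolating the two purely formal ingredients: the double-coset (Mackey) decomposition of $\Res^G_H\Ind^G_H$ and the adjunctions between $\Ind$ and $\Res$. First I would establish a \emph{mark formula}: for $D\le H\le G$, applying $\Res^G_H$ to an object $\Ind^G_H N$ with $N\in\MM(H;D)$ and using the categorified Mackey formula, one finds that $\Res^G_H\Ind^G_H N$ decomposes as a direct sum whose summand indexed by the trivial double coset $HeH$ is canonically $N$ itself (up to a conjugation isomorphism that is the identity for $g=e$), while every other summand is indexed by some $g\in G\setminus H$ and is of the form $\Ind^H_{H\cap{}^gH}(\cdots)$; restricting $N$ further down to $D$ and re-inducing shows these other summands all lie in $\MM(H;\mathfrak X)$ with $\mathfrak X=\{D\cap{}^gD\mid g\in G\setminus H\}$. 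Hence the unit $\eta\colon N\to\Res^G_H\Ind^G_H N$ becomes an isomorphism in the quotient $\MM(H;D)/\MM(H;\mathfrak X)$.

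Second I would exploit the \emph{two-sided} adjunction: $\Ind^G_H$ is both left and right adjoint to $\Res^G_H$. Using the counit $\varepsilon\colon\Ind^G_H\Res^G_H\to\Id$ on an object $M\in\MM(G;D)$, I would show — again via the Mackey formula, now applied to $\Res^G_H$ of a module induced from $D$, so that $M$ itself is a summand of $\Ind^G_D\Res^G_D M$ and the "diagonal" term reproduces $M$ — that $\varepsilon_M$ is a split epimorphism whose kernel-complement lies in $\MM(G;\mathfrak X)$; symmetrically the unit $M\to\Res^G_H\Ind^G_H$-type map on the $\Ind$ side, combined with the triangle identities, makes $\varepsilon_M$ an isomorphism in $\MM(G;D)/\MM(G;\mathfrak X)$. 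Thus $\Ind^G_H$ and $\Res^G_H$ induce functors between the two additive quotient categories, the composite $\Res\circ\Ind$ is isomorphic to the identity on the $H$-side quotient, and $\Ind\circ\Res$ is isomorphic to the identity on the $G$-side quotient; note $\Res^G_H$ does send $\MM(G;D)$ into $\MM(H;D)$ and $\MM(G;\mathfrak X)$ into $\MM(H;\mathfrak X)$ precisely by the same mark-formula bookkeeping (the summands of $\Res^G_H\Ind^G_K$ for $K\in\{D\}\cup\mathfrak X$ are all induced from subgroups subconjugate, inside $H$, to members of $\{D\}\cup\mathfrak X$).

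At this point $\Ind^G_H$ and $\Res^G_H$ are mutually inverse \emph{up to natural isomorphism} between the additive quotients, but the isomorphisms $\Res\Ind\cong\Id$ and $\Ind\Res\cong\Id$ need not come from genuine unit/counit data making this an equivalence on the nose; this is exactly why the statement passes to idempotent completions $(-)^\natural$. The final step is therefore soft: a pair of additive functors inducing mutually inverse equivalences up to isomorphism on idempotent-split hulls — or more precisely, the fact that $F\colon\cat A\rightleftarrows\cat B\,{:}\,G$ with $GF\cong\Id$ and $FG\cong\Id$ (as functors, no compatibility required) already forces $F^\natural\colon\cat A^\natural\to\cat B^\natural$ to be an equivalence — gives the result. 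I would either cite the relevant lemma on idempotent completions from \cite{BalmerDellAmbrogio18pp} or include the short argument: such $F,G$ are "quasi-inverse" in the weak sense, and idempotent completion rigidifies this into a genuine equivalence because splitting idempotents kills the obstruction.

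The main obstacle is the first, computational step: getting the categorified Mackey formula to yield, cleanly and naturally in $N$, the splitting "$\Res^G_H\Ind^G_H N \cong N \oplus (\text{something in }\MM(H;\mathfrak X))$" together with the statement that the structural unit map realizes the inclusion of the $N$-summand. One must be careful that the isomorphism is natural (so that it descends to a natural isomorphism of functors on the quotient), that the conjugation 2-cells for the double coset $g=e$ are suitably normalized to the identity, and that "induced from $D\cap{}^gD$" genuinely lands in $\MM(H;\mathfrak X)$ rather than in a larger additive subcategory — i.e.\ that restricting the $D$-object $N$ down and back up does not enlarge the relevant family. Once this mark/splitting lemma is in place, everything else is formal adjunction yoga plus the idempotent-completion trick.
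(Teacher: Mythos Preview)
Your plan has a genuine gap at the very first step, and it propagates through the rest. The off-diagonal summands of $\Res^G_H\Ind^G_H N$ for $N\in\MM(H;D)$ do \emph{not} lie in $\MM(H;\mathfrak X)$; they only lie in $\MM(H;\mathfrak Y)$ where $\mathfrak Y=\{H\cap{}^gD\mid g\in G\smallsetminus H\}$, and the subgroup-closure of~$\mathfrak Y$ is in general strictly larger than that of~$\mathfrak X$. Concretely, for a strict $D$-object $N=\Ind^H_D W$ the Mackey formula yields $\Res^G_H\Ind^G_H N\cong\bigoplus_{[g]\in H\backslash G/D}\Ind^H_{H\cap{}^gD}(\cdots)$, and for $g\notin H$ the subgroup $H\cap{}^gD$ need not be $H$-subconjugate to any $D\cap{}^{g'}D$. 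A counterexample: take $G=D_8=\langle r,s\mid r^4=s^2=1,\,srs=r^{-1}\rangle$, $H=\langle r^2,s\rangle$, $D=\langle s\rangle$ (so $N_G(D)=H$), and $\kk$ of characteristic~$2$. Then $\mathfrak X=\{1\}$, but $\Res^G_H\Ind^G_D\kk\cong\Ind^H_{\langle s\rangle}\kk\oplus\Ind^H_{\langle r^2s\rangle}\kk$, and the second summand is a two-dimensional non-projective module that lies neither in $\MM(H;\mathfrak X)=\fgproj(\kk H)$ nor even in $\MM(H;D)$. Thus your unit $\eta$ is not an isomorphism modulo~$\mathfrak X$, and your assertion that $\Res^G_H$ preserves $\MM(-;D)$ and $\MM(-;\mathfrak X)$ is false as well; there is then no well-defined functor $\Res^G_H$ between the two quotients in the statement, and your argument for $\Ind\circ\Res\cong\Id$ collapses.

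This is exactly the subtlety the paper's proof works around. Restriction $i^*$ is shown to land not in $\frac{\MM(H;D)}{\MM(H;\mathfrak X)}$ but in a third quotient $\frac{\MM(H;D\sqcup\mathfrak Y)}{\MM(H;\mathfrak Y)}$, and a separate ``second isomorphism'' argument (\Cref{Prop:proto-Green}, resting on \Cref{Lem:factor} and the groupoid computation $\Isoc{D}{\partial_i(H,D)}{H}\simeq\partial_i(D,D)$ of \Cref{Lem:geography}) identifies this third quotient with $\frac{\MM(H;D)}{\MM(H;\mathfrak X)}$ up to retracts. Moreover the paper never proves $i_*i^*\cong\Id$ on the $G$-side; instead it shows that $i^*$ is \emph{faithful} on $\frac{\MM(G;D)}{\MM(G;\mathfrak X)}$ (again via \Cref{Lem:factor}) and combines this with the equivalence $i^*i_*\cong(\text{proto-Green})$ to conclude that both $i^*$ and $i_*$ are equivalences up to retracts. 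Your outline is missing both the detour through~$\mathfrak Y$ and the faithfulness step that replaces your unproved $\Ind\circ\Res\cong\Id$.
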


We remind the reader of idempotent-completion in \Cref{Def:idempotent-completion}. Another formulation of the above is to say that induction $\Ind_H^G$ yields a \emph{fully-faithful} functor
\begin{equation}
\label{eq:intro-Green-eq-up-to-retracts}%
\frac{\MM(H ; D)}{\MM(H; \mathfrak{X})}
\hook
\frac{\MM(G ; D)}{\MM(G; \mathfrak{X})}
\end{equation}
and every object in the right-hand category is a retract of an object on the left. In other words, it is an \emph{equivalence-up-to-retracts} (\Cref{Def:up-to-retracts}).

As said, our theory does not assume Krull-Schmidt. Note that the price we pay to extend Green beyond Krull-Schmidt is very mild: We only need idempotent-completion. Furthermore, for triangulated categories with coproducts, we can get rid of the idempotent-completion altogether and~\eqref{eq:intro-Green-eq-up-to-retracts} is already an equivalence on the nose (see \Cref{Prop:idempotent-complete}). This applies to the first examples we list below.

There is another case where all quotients in sight are idempotent-complete and~\eqref{eq:intro-Green-eq-up-to-retracts} is an equivalence. It is when the categories $\cat M(H)$ do satisfy Krull-Schmidt. In that setting, we can speak of vertices as in representation theory (see \Cref{Rem:vertices-and-sources}) and we obtain a generalized Green correspondence:

\begin{Thm} [{\Cref{Cor:Green-corr}, \Cref{Prop:vertices-match} and \Cref{Cor:Green-corr-vertex}}]
\label{Thm:intro-Green-corr}%
If $\MM$ is a\break Mackey 2-functor for~$G$ taking values in Krull-Schmidt categories, the functors $\Ind_H^G$ and $\Res_H^G$ yield a bijection ``\`a la Green'' between isomorphism classes of those indecomposable objects in $\cat M(H;D)$, respectively in~$\cat M(G;D)$, whose vertex is not $G$-subconjugate to a subgroup in~$\mathfrak{X}$.
The bijection is vertex-preserving, and if $N_G(D)\leq H$, it restricts on both sides to indecomposable objects with vertex~$D$.
\end{Thm}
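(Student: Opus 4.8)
The plan is to bootstrap everything from the Green equivalence \Cref{Thm:intro-Green-eq}, using only formal consequences of the Krull--Schmidt hypothesis. First I would note that the categories of $D$-objects are again Krull--Schmidt, being full, additive, retract-closed subcategories of $\MM(H)$ and~$\MM(G)$; moreover $\MM(H;\mathfrak{X})\subseteq\MM(H;D)$ and $\MM(G;\mathfrak{X})\subseteq\MM(G;D)$, since every member of~$\mathfrak{X}$ is a subgroup of~$D$ and induction is transitive. A retract-closed additive quotient of a Krull--Schmidt category is still Krull--Schmidt --- the image of any indecomposable has local or zero endomorphism ring, so every object is a finite sum of such --- hence idempotent-complete; so in the present setting the idempotent-completions of \Cref{Thm:intro-Green-eq} are superfluous and $\Ind_H^G$ induces an honest equivalence
\[
E\colon\ \frac{\MM(H;D)}{\MM(H;\mathfrak{X})}\ \isotoo\ \frac{\MM(G;D)}{\MM(G;\mathfrak{X})}.
\]

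Next I would use the standard dictionary for a retract-closed additive quotient $\cat A/\cat B$ of a Krull--Schmidt category~$\cat A$: an object of~$\cat A$ dies in $\cat A/\cat B$ exactly when it lies in~$\cat B$ (as $\id_M$ factors through~$\cat B$ iff $M$ is a retract of an object of~$\cat B$); an indecomposable $M\notin\cat B$ remains indecomposable there, its endomorphism ring in the quotient being a nonzero quotient of the local ring $\End_{\cat A}(M)$; and two indecomposables $M,M'\notin\cat B$ that become isomorphic in $\cat A/\cat B$ were already isomorphic in~$\cat A$. For the last point, lift a mutually inverse pair to $f\colon M'\to M$ and $g\colon M\to M'$: then $\id_M-fg$ factors through~$\cat B$, so it is not invertible (else $M\in\cat B$); since $\End_{\cat A}(M)$ is local, $fg$ therefore lies outside its maximal ideal and is a unit, whence $g$ is a split monomorphism into the indecomposable $M'\neq 0$ and hence an isomorphism. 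So isomorphism classes of indecomposables of $\cat A/\cat B$ biject with isomorphism classes of indecomposables of~$\cat A$ not in~$\cat B$.

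Applying this to both quotients and composing with~$E$ yields a bijection between isomorphism classes of indecomposables of $\MM(H;D)$ outside $\MM(H;\mathfrak{X})$ and of $\MM(G;D)$ outside $\MM(G;\mathfrak{X})$, of the shape of Green's correspondence: it sends $M$ to the unique indecomposable retract $g(M)$ of $\Ind_H^G M$ (which lies in $\MM(G;D)$ by transitivity of induction) not belonging to $\MM(G;\mathfrak{X})$, and its inverse sends $L$ to the unique indecomposable retract $f(L)$ of $\Res_H^G L$ lying in $\MM(H;D)$ but not in $\MM(H;\mathfrak{X})$ --- the existence and uniqueness of this distinguished summand, and the compatibility $\overline{f(L)}=E\inv(\overline L)$, being where the Mackey formula for $\Res_H^G\Ind_H^G$ and $\Ind_H^G\Res_H^G$ together with the $\Ind_H^G\adj\Res_H^G$ adjunction enter. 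Two further points must then be checked. (i)~An indecomposable $M$ of $\MM(H;D)$ lies in $\MM(H;\mathfrak{X})$ iff a vertex $V\le D$ of~$M$ (\cf \Cref{Rem:vertices-and-sources}) is $H$-subconjugate to a member of~$\mathfrak{X}$, and for such $V$ a direct check using $\mathfrak{X}=\{D\cap{}^{g\!}D\}$ shows this is equivalent to $V$ being $G$-subconjugate to a member of~$\mathfrak{X}$ (indeed ${}^xV\le D$ already forces $V\le D\cap{}^{x\inv}D$, or $V\le D\cap{}^{(x\inv g)}D$ when $x\in H$); so the bijection is exactly between the indecomposables described in the statement. (ii)~It is vertex-preserving: $g(M)$ is a retract of $\Ind_H^G M$, hence of $\Ind_V^G(-)$, so a vertex of $g(M)$ is $G$-subconjugate to~$V$; conversely $M$ is a retract of $\Res_H^G g(M)$, and expanding $\Res_H^G\Ind_W^G$ by the Mackey formula for a vertex $W\le D$ of $g(M)$ exhibits $M$ as a retract, in $\MM(H)$, of something induced from a subgroup $H$-subconjugate to a $G$-conjugate of~$W$; the two subconjugacies (and a comparison of orders) force $V$ and~$W$ to agree up to conjugacy.

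Finally, suppose $N_G(D)\le H$. For $g\in G\smallsetminus H$ we have $g\notin N_G(D)$, hence $D\cap{}^{g\!}D\subsetneq D$, so no member of~$\mathfrak{X}$ contains a $G$-conjugate of~$D$; therefore every indecomposable of $\MM(H;D)$ or of $\MM(G;D)$ with vertex~$D$ automatically has vertex not $G$-subconjugate to~$\mathfrak{X}$ and so lies in the domain, resp.\ codomain, of the bijection. Combined with vertex-preservation, the bijection restricts to one between the indecomposables of vertex exactly~$D$ on the two sides. The main obstacle in this plan is items~(i)--(ii) above together with the existence and uniqueness of the Green summands $g(M)$ and $f(L)$ and the identification $\overline{f(L)}=E\inv(\overline L)$: this is the genuine content of the classical Green correspondence and rests on careful bookkeeping with retracts, vertices, the adjunctions and, above all, the Mackey formula; everything else is formal Krull--Schmidt manipulation on top of \Cref{Thm:intro-Green-eq}.
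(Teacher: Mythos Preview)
Your proposal is correct and follows essentially the same route as the paper: pass Krull--Schmidt to the subquotients (the paper's \Cref{Rem:KS-quotients}), use the dictionary between indecomposables upstairs and in the quotient (the paper's \Cref{Lem:quot-indec}), apply the Green equivalence to get the bijection (the paper's \Cref{Cor:Green-eq-KS} and \Cref{Cor:Green-corr}), and then run the two-sided subconjugacy argument via Mackey for vertex preservation (the paper's \Cref{Prop:vertices-match}) and the $N_G(D)\le H$ specialization (the paper's \Cref{Cor:Green-corr-vertex}). The only cosmetic difference is that the paper describes the inverse map as the $\partial_i(H,D)$-free part of restriction (built via an explicit functor $p'_E$ in \Cref{Prop:second-iso-thm-inverse}) rather than your ``unique summand of $\Res^G_H L$ in $\MM(H;D)\smallsetminus\MM(H;\mathfrak X)$'', but these coincide because $\MM(H;D)\cap\MM(H;\partial_i(H,D))=\MM(H;\partial_i(D,D))$ by \Cref{Lem:geography} and \Cref{Cor:filtered}.
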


\tristars

By specializing the above results to the multitude of readily available Mackey 2-functors~$\MM$, we obtain Green equivalences to every equivariant heart's delight.
For instance, in topology, we may consider equivariant stable homotopy theory:

\begin{Cor} [Apply \Cref{Thm:intro-Green-eq} to \Cref{Exa:top}]
\label{Cor:intro-top}%
For each subgroup $H\leq G$, consider $\MM(H):=\SH(H)= \Ho(\mathcal Sp^H)$ the stable homotopy category of genuine $H$-spectra. Then for $D\le H\le G$, the functor $\Ind_H^G = G_+ \wedge_H -$ yields an equivalence
\begin{equation*}
\frac{\SH(H ; D)}{\SH(H; \mathfrak{X})}
\isotoo
\frac{\SH(G ; D)}{\SH(G; \mathfrak{X})},
\end{equation*}
between additive quotient categories of $D$-objects, as above.
\end{Cor}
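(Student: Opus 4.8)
The plan is to obtain this statement as an immediate specialization of the general Green equivalence, so the work reduces to checking that the hypotheses of \Cref{Thm:intro-Green-eq} hold in this example and then removing the idempotent-completions. First I would recall from \Cref{Exa:top} that the assignment $H\mapsto\SH(H)$, together with the usual induction $\Ind^H_K=H_+\wedge_K-$, restriction $\Res^H_K$, conjugation functors and the evident conjugation isomorphisms, assembles into a Mackey 2-functor for~$G$ in the sense of \Cref{Rec:2-Mackey}: the two-sided adjunction $\Ind^H_K\adj\Res^H_K\adj\Ind^H_K$ is the Wirthm\"uller isomorphism, and the categorified Mackey formula is the familiar base-change isomorphism for genuine equivariant spectra. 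Granting this, \Cref{Thm:intro-Green-eq} applies verbatim to the subgroups $D\le H\le G$ and immediately produces an equivalence
\begin{equation*}
\Ind_H^G\colon\bigg(\frac{\SH(H ; D)}{\SH(H; \mathfrak{X})}\bigg)^{\natural}
\isotoo
\bigg(\frac{\SH(G ; D)}{\SH(G; \mathfrak{X})}\bigg)^{\natural}
\end{equation*}
between the idempotent-completions, with $\mathfrak{X}$ as in~\eqref{eq:X}.

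It then remains to dispose of the idempotent-completions, and here I would invoke \Cref{Prop:idempotent-complete}: since each $\SH(H)$ is a triangulated category with arbitrary coproducts, the additive quotient categories $\SH(H;D)/\SH(H;\mathfrak{X})$ appearing above are already idempotent-complete, so the displayed functor is an equivalence without any completion. Rewriting $\Ind_H^G$ as $G_+\wedge_H-$ then yields precisely the asserted equivalence of additive quotient categories of $D$-objects; equivalently, in the reformulation of~\eqref{eq:intro-Green-eq-up-to-retracts}, the fully-faithful functor induced by $\Ind_H^G$ is now automatically essentially surjective.

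As this is a corollary, there is no real obstacle internal to the argument: all the weight is carried by the two cited inputs, namely that $\SH$ verifies the full list of (2-)functoriality and coherence axioms of a Mackey 2-functor (\Cref{Exa:top}, following \cite{BalmerDellAmbrogio18pp}) and that \Cref{Prop:idempotent-complete} applies to the large triangulated categories~$\SH(H)$. The only point I would double-check in passing is that the subcategories $\SH(H;D)=\add(\Ind^H_D\SH(D))$ are indeed the natural ones to write here and that \Cref{Prop:idempotent-complete} is formulated for exactly such additive (not necessarily triangulated) subquotients — which it is.
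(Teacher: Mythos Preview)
Your proposal is correct and follows exactly the paper's own argument: apply \Cref{Thm:Green-equivalence} to the Mackey 2-functor $\SH(-)$ of \Cref{Exa:top}, then invoke \Cref{Prop:idempotent-complete} (using that each $\SH(H)$ is triangulated with coproducts and exact restrictions) to drop the idempotent-completions. The paper's proof is a one-line version of precisely this.
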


In noncommutative geometry, we may consider equivariant KK-theory:

\begin{Cor} [Apply \Cref{Thm:intro-Green-eq} to \Cref{Exa:KK}]
\label{Cor:intro-KK}
For each $H\leq G$, consider $\MM(H):=\KK(H):= \KK^H$ the Kasparov category of separable complex $H$-C*-algebras. Then for $D\le H\le G$, the functor $\Ind_H^G= {\rm C}(G,-)^H$ yields an equivalence
\begin{equation*}
\frac{\KK(H ; D)}{\KK(H; \mathfrak{X})}
\isotoo
\frac{\KK(G ; D)}{\KK(G; \mathfrak{X})}
\end{equation*}
between additive quotient categories of $D$-objects, as above.
\end{Cor}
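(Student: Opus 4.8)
The plan is to feed the data of equivariant Kasparov theory into \Cref{Thm:intro-Green-eq}; once the structural input is granted, the rest is bookkeeping. First I would invoke \Cref{Exa:KK}, where it is verified that the assignment $H \mapsto \KK(H) = \KK^H$ on the subgroups of $G$ --- equipped with the restriction functors $\Res^H_K$, the induction functors $\Ind^H_K = {\rm C}(H,-)^K$ (continuous functions $H \to -$, with $H$ acting by translation and $K$ diagonally), the conjugation functors, and the coherence 2-cells --- satisfies the axioms of a Mackey 2-functor for $G$ in the sense of \Cref{Rec:2-Mackey}. The non-formal ingredients of that verification are Kasparov's two-sided adjunction $\Ind^H_K \dashv \Res^H_K \dashv \Ind^H_K$ and the categorified Mackey formula, which expresses restriction-after-induction as a coproduct of induction--conjugation--restriction terms indexed by the $K$-orbits of the $G$-set $G/H$; these are classical (Kasparov; Meyer--Nest) and are precisely what \Cref{Exa:KK} records.

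Granting this, \Cref{Thm:intro-Green-eq} applied with $\MM = \KK$ and to the subgroups $D \le H \le G$ yields at once an equivalence between the idempotent-completions of the additive quotient categories $\KK(H;D)/\KK(H;\mathfrak{X})$ and $\KK(G;D)/\KK(G;\mathfrak{X})$, induced by $\Ind_H^G = {\rm C}(G,-)^H$, with $\mathfrak{X}$ the family~\eqref{eq:X}. It then remains only to dispose of the idempotent-completions and obtain an equivalence on the nose, as stated. For this I would note that each $\KK(H)$ is a triangulated category with (countable) coproducts, so that \Cref{Prop:idempotent-complete} applies and the quotient categories occurring in~\eqref{eq:intro-Green-eq-up-to-retracts} are already idempotent-complete; hence the fully-faithful functor~\eqref{eq:intro-Green-eq-up-to-retracts} is essentially surjective, \ie an equivalence.

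Consequently there is no real obstacle within this deduction itself: the genuine content has been isolated in \Cref{Exa:KK}, namely the verification of the Mackey 2-functor axioms for $\KK^{(-)}$ --- above all the coherent, natural decomposition of $\Res \circ \Ind$ over double cosets, which is what produces the subgroup family $\mathfrak{X}$ in the final statement. Once that equivariant $C^*$-algebraic input is in place, the corollary follows formally from \Cref{Thm:intro-Green-eq} and \Cref{Prop:idempotent-complete}; the very same argument, with \Cref{Exa:top} in place of \Cref{Exa:KK}, gives \Cref{Cor:intro-top}.
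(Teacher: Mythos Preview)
Your proposal is correct and follows exactly the paper's own argument: apply \Cref{Thm:Green-equivalence} to the Mackey 2-functor $\MM(-)=\KK(-)$ of \Cref{Exa:KK}, then invoke \Cref{Prop:idempotent-complete} (using that each $\KK(H)$ is triangulated with countable coproducts and exact restriction functors) to drop the idempotent-completions. Your added commentary on the ingredients behind \Cref{Exa:KK} is accurate but not needed for the deduction itself.
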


In algebraic geometry, we may consider equivariant sheaves over varieties. In this setting we can even find Mackey 2-functors satisfying Krull-Schmidt:
\begin{Cor}  [Apply Theorems \ref{Thm:intro-Green-eq} and \ref{Thm:intro-Green-corr} to \Cref{Exa:sheaves}]
\label{Cor:intro-geom}%
Let $X$ be a regular and proper (\eg smooth projective) algebraic variety over a field~$\kk$, equipped with an algebraic action of~$G$. For each subgroup $H\leq G$ consider $\MM(H):=\Db(X/\!\!/H)$, the bounded derived category of coherent $H$-equivariant sheaves on~$X$.
Then for $D\le H\le G$, the induction functor $\Ind_H^G$ yields an equivalence
\begin{equation*}
\frac{\Db(X/\!\!/ H ; D)}{\Db(X/\!\!/H; \mathfrak{X})}
\isotoo
\frac{\Db(X /\!\!/ G ; D)}{\Db(X /\!\!/ G; \mathfrak{X})}
\end{equation*}
of $\kk$-linear categories. If $N_G(D)\leq H$, the above yields a bijection
\[
\left\{{\textrm{indecomposable objects in} \atop \Db(X/\!\!/H) \textrm{ with vertex }D}\right\}
\overset{\sim}{\longleftrightarrow}
\left\{{\textrm{indecomposable objects in } \atop \Db(X/\!\!/G) \textrm{ with vertex }D}\right\}
\]
of isomorphism classes of indecomposable complexes of equivariant sheaves.
\end{Cor}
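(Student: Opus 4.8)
The plan is to recognize $X/\!\!/(-)$ as a Mackey 2-functor for~$G$ and then feed it into the two abstract statements. First I would appeal to \Cref{Exa:sheaves}, which provides exactly this: the assignment $H\mapsto \MM(H):=\Db(X/\!\!/H)=\Db(\coh[X/H])$, with $\Ind_H^G$ and $\Res_H^G$ the derived push-forward and pull-back along the finite étale representable map $[X/H]\to[X/G]$ of quotient stacks, satisfies all the Mackey 2-functor axioms. This is where the hypotheses on~$X$ enter: coherence together with properness over~$\kk$ forces all $\Hom$-groups of $\MM(H)$ to be finite-dimensional $\kk$-vector spaces, and regularity of~$X$ keeps the six-functor formalism (in particular ambidexterity and the categorified Mackey isomorphisms) inside the \emph{bounded} coherent derived categories; all this is arranged in \Cref{Exa:sheaves}. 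Granting it, \Cref{Thm:intro-Green-eq} applies directly and yields the equivalence in the first display of the corollary, \apriori only after idempotent-completion.

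The second step is to remove the symbol $(-)^\natural$. For this I would observe that each $\MM(H)=\Db(\coh[X/H])$ is a \emph{Krull--Schmidt} $\kk$-linear category: its $\Hom$-groups are finite-dimensional over~$\kk$ by properness, so endomorphism rings are finite-dimensional $\kk$-algebras and in particular semiperfect; and idempotents split in $\Db(\coh[X/H])$ because, for the Noetherian stack~$[X/H]$, this category is equivalent to the full subcategory of $\Db(\Qcoh[X/H])$ on the bounded complexes with coherent cohomology, which is thick (being closed under direct summands, as $\coh$ is) inside the idempotent-complete category $\Db(\Qcoh[X/H])$. A $\kk$-linear additive category with finite-dimensional $\Hom$-spaces in which idempotents split is Krull--Schmidt. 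As explained in the paragraph preceding \Cref{Thm:intro-Green-corr} (the complementary, triangulated-with-coproducts situation being \Cref{Prop:idempotent-complete}), when the values $\MM(H)$ are Krull--Schmidt the additive subcategories $\MM(H;D)$ and $\MM(H;\mathfrak{X})$ are closed under direct summands, so the additive quotients $\MM(H;D)/\MM(H;\mathfrak{X})$ are again Krull--Schmidt, in particular idempotent-complete. Hence $(-)^\natural$ is redundant and the functor~\eqref{eq:intro-Green-eq-up-to-retracts} is already an equivalence on the nose, which is the first assertion of \Cref{Cor:intro-geom}.

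For the final bijection I would simply feed this Krull--Schmidt-valued~$\MM$ into \Cref{Thm:intro-Green-corr}: the functors $\Ind_H^G$ and $\Res_H^G$ then induce a vertex-preserving bijection ``\`a la Green'' between isomorphism classes of indecomposable objects of $\MM(H)$ and of $\MM(G)$ whose vertex is not $G$-subconjugate to a member of~$\mathfrak{X}$, and under the extra hypothesis $N_G(D)\le H$ this bijection restricts on both sides to the indecomposables of vertex exactly~$D$ (``vertex'' being the minimal $K\le G$ with the object in $\MM(-;K)$, \cf \Cref{Rem:vertices-and-sources}). Unwinding this for $\MM=\Db(X/\!\!/-)$ is precisely the stated bijection of isomorphism classes of indecomposable complexes of equivariant coherent sheaves.

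The genuine content — and the step I expect to be the real obstacle — is thus entirely packaged in \Cref{Exa:sheaves}: checking that derived push/pull along $[X/H]\to[X/G]$ assemble into a Mackey 2-functor (the two-sided adjunction, which rests on the map being finite étale with trivial relative dualizing complex, together with the categorified base-change/Mackey isomorphisms), and that regularity and properness of~$X$ confine the whole formalism to $\Db(\coh)$ with $\kk$-finite $\Hom$'s. Once that input is in hand, \Cref{Cor:intro-geom} follows formally from \Cref{Thm:intro-Green-eq} and \Cref{Thm:intro-Green-corr} together with the elementary Krull--Schmidt bookkeeping above.
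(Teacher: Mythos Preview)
Your proposal is correct and follows essentially the same route as the paper: invoke \Cref{Exa:sheaves} to obtain the Mackey 2-functor $\MM(-)=\Db(\coh(X/\!\!/-))$, apply \Cref{Thm:intro-Green-eq} (equivalently \Cref{Thm:Green-equivalence}), then use the Krull--Schmidt property to drop $(-)^\natural$ via \Cref{Rem:KS-quotients}, and finally appeal to \Cref{Thm:intro-Green-corr} for the vertex-$D$ bijection. Your expanded justification of Krull--Schmidt (Hom-finiteness from properness plus idempotent-completeness) is exactly what \Cref{Exa:sheaves} records, citing \cite{Atiyah56} and~\cite{SGA6}.
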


As in representation theory, which is the special case $X=\Spec(\kk)$ of the above, the bijection in \Cref{Cor:intro-geom} is non-trivial only when the field~$\kk$ has positive characteristic~$p$ and $D$ is a $p$-group.

We trust the reader gets the idea from the above sample: Such applications are limitless. We also easily recover all versions of the correspondence known in representation theory; see \Cref{Exa:Green-corr-modrep} and the following remarks. In particular, we obtain the Green equivalence for derived categories of (unbounded) chain complexes, and the Green correspondence for indecomposable complexes.

\begin{Rem}
The theory of Mackey 2-functors, as developed in \cite{BalmerDellAmbrogio18pp} and used in this article, is formulated in terms of finite \emph{groupoids}, rather then just finite groups. This is not a gratuitous generalization but is done out of convenience, for instance because groupoids allow us to formulate \emph{canonical} Mackey formulas, without having to choose any coset representatives. This uses the notion of \emph{Mackey square} and is briefly recalled in \Cref{sec:recollections}. In the present article, we also use Mackey squares to give a unified and conceptual treatment of the somewhat mysterious classes of subgroups traditionally denoted $\mathfrak{X}, \mathfrak{Y}$ and the like, that typically come up in the proof of the Green correspondence; see \Cref{sec:partial}.
\end{Rem}

\begin{Rem}
At least two works, \cite{AuslanderKleiner94} and \cite{CarlsonWangZhang20pp}, prove their representation-theoretic versions of the Green correspondence by way of some abstract results on adjoint functors, which do not even mention finite groups. Of course, these are \emph{not} recovered by our equivariant methods. However \cite{AuslanderKleiner94} is rather complicated and hard to relate to the examples. Although a significant improvement over~\cite{AuslanderKleiner94}, the recent \cite[\S\,6]{CarlsonWangZhang20pp} still involves a big diagram of categories and a list of several technical conditions, which are not trivial to understand intuitively. On the other hand, it is very simple to derive new Green equivalences with our approach, because it is easy to produce new examples of Mackey 2-functors. Moreover, every reader can remember the concept of ``additive categories $\MM(G)$ depending on finite groups~$G$, with induction, restriction and a Mackey formula".
\end{Rem}

\begin{Rem}
As alluded to above, the quotient categories appearing in the Green correspondence~\eqref{eq:intro-Greeneq} sometimes have more structure than just the additive one. Notably, they are often subcategories of triangulated categories, with somewhat exotic triangulations; see~\cite[Section~7]{Beligiannis00}. See also \Cref{Prop:idempotent-complete}. The reader interested in further details is referred to Zimmermann~\cite{Zimmermann20pp}.
\end{Rem}

The organization of the paper should be clear from the above introduction and the table of contents.

%
\section{Additive preliminaries}
\label{sec:basics}%

We recall a few basics and fix some terminology mostly about additive categories.

\begin{Not}
The symbol $\simeq$ denotes isomorphisms. We reserve $\cong$ for natural and canonical isomorphisms.
\end{Not}

\begin{Not}
We will write $x\leq y$ to express the fact that an object $x$ is a retract of an object~$y$, meaning that there are maps $\alpha\colon x\to y$ and $\beta\colon y\to x$ such that $\beta\alpha=\id_x$. In an additive category that is \emph{idempotent-complete} (\ie every idempotent endomorphism $e=e^2\colon y\to y$ yields a splitting $y\simeq \textrm{Im}(e)\oplus \Ker(e)$), an object $x$ is a retract of $y$ if and only if it is a direct summand: $y\simeq x \oplus x'$ for some object~$x'$.
\end{Not}

\begin{Def}
\label{Def:idempotent-completion}%
The idempotent-completion $\cat{C}^\natural$ (\aka Karoubi envelope) is the universal idempotent-complete category receiving~$\cat{C}$. It can be explicitly constructed as pairs $(x,e)$ where $x\in \Obj(\cat{C})$ and $e=e^2\colon x\to x$ is an idempotent, with morphisms $f\colon (x,e)\to (x',e')$ given by $f\colon x\to x'$ in~$\cat{C}$ such that $e'fe=f$. The fully-faithful embedding $\cat{C}\hook \cat{C}^\natural$ maps $x$ to~$(x,\id_x)$.
\end{Def}

\begin{Exa}
The idempotent-completion of the category of free modules is the category of projective modules.
\end{Exa}

\begin{Def}
\label{Def:up-to-retracts}%
A functor $F\colon\cat{C}\to \cat{D}$ is \emph{surjective-up-to-retracts} if every object of~$\cat{D}$ is a retract of $F(x)$ for some object $x$ of~$\cat{C}$. If moreover $F$ is fully faithful, we say that $F$ is an \emph{equivalence-up-to-retracts}. The latter is equivalent to the induced functor $F^\natural\colon \cat{C}^\natural\to \cat{D}^\natural$ on idempotent-completions being an equivalence.
\end{Def}

\begin{Not}
\label{Not:add}%
If $\cat E\subseteq \cat A$ is a collection of objects in an additive category, $\add(\cat E)$ will denote the smallest full subcategory of $\cat A$ containing $\cat E$ and closed under taking finite directs sums and retracts. If $\cat A$ is idempotent-complete then so is $\add(\cat E)$.
\end{Not}

\begin{Rec}
\label{Rec:add-quotient}%
Let $\cat{B}\subseteq\cat{A}$ be a full additive subcategory of an additive category~$\cat{A}$. The \emph{additive quotient} $\cat{A}\onto \cat{A}/\cat{B}$ is the universal additive functor mapping~$\cat{B}$ to zero. It is realized by keeping the same objects as~$\cat{A}$ and taking the following quotients of abelian groups as hom groups:
\[
\cat{A}/\cat{B}(x,y)=\frac{\cat{A}(x,y)}{\left\{\alpha\colon x\to y\ \Big|\ \exists \ \vcenter{\xymatrix@C=1em@R=.6em{x\ar[rr]^-{\alpha}\ar@{..>}[rd]&& y\\ & z \ar@{..>}[ru]}}\textrm{ with }z\in\cat{B}\right\}}
\]
Composition is well-defined on representatives: $[\beta]\circ[\alpha]=[\beta\alpha]$. Note that this construction does not change if~$\cat{B}$ is replaced by~$\add(\cat{B})$, so we can as well assume~$\cat{B}$ closed under retracts in~$\cat{A}$.
\end{Rec}

\begin{Rem}
\label{Rem:retract-in-quotient}%
An object $x\in\cat{A}$ is a retract of~$y\in\cat{A}$ in the quotient $\cat{A}/\cat{B}$ if and only if $x$ is a retract of $y\oplus z$ in~$\cat{A}$ for some~$z\in\cat{B}$. Indeed, let $\alpha\in\cat{A}(x,y)$ and $\beta\in\cat{A}(y,x)$ be such that $[\beta\alpha]=\id_{x}$ in~$\cat{A}/\cat{B}$. This means there exists $z\in\cat{B}$ and a factorization $\id_x-\beta\alpha=\delta\gamma$ for $\gamma\in\cat{A}(x,z)$ and $\delta\in\cat{A}(z,x)$. But then the composite
\[
\xymatrix{x \ar[r]^-{\smat{\alpha\\\gamma}}&y\oplus z \ar[r]^-{\smat{\beta & \delta}}&x}
\]
is the identity of~$x$. The converse is obvious since $z\cong 0$ in~$\cat{A}/\cat{B}$ for all~$z\in\cat{B}$.
\end{Rem}

Let us say a word about situations where those quotients are idempotent-complete. Recall first the following very convenient fact:
\begin{Prop}[{B\"ockstedt-Neeman~\cite[Proposition~3.2]{BoekstedtNeeman93}}]
\label{Prop:BN}%
Let $\cat{T}$ be a triangulated category admitting countable coproducts. Then $\cat{T}$ is idempotent-complete.
\end{Prop}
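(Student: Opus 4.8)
The plan is to split idempotents directly, exploiting the countable coproducts to form Milnor telescopes (homotopy colimits of sequences). Given an object $X$ of $\cat{T}$ and an idempotent $e=e^2\colon X\to X$, with complementary idempotent $f=\id_X-e$ (so $e+f=\id_X$ and $ef=fe=0$), it suffices to produce an object $Z$ and maps $p\colon X\to Z$, $i\colon Z\to X$ with $i p=e$ and $p i=\id_Z$, since this exhibits an arbitrary idempotent as a split idempotent, which is exactly idempotent-completeness.

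First I would build the telescope. For $d\in\{e,f,\id_X,0\}$ set $Y:=\coprod_{n\ge 1}X$ with canonical inclusions $\iota_n$, let $s_d\colon Y\to Y$ be the ``$d$-shift'' determined by $s_d\iota_n=\iota_{n+1}d$, and let $Z_d$ be a cone of $\id_Y-s_d$, fitting in a triangle $Y\xrightarrow{\id_Y-s_d}Y\xrightarrow{q_d}Z_d\to\Sigma Y$; this is the homotopy colimit of the telescope $X\xrightarrow{d}X\xrightarrow{d}\cdots$. The comparison maps are automatic: the cocone $Y\to X$ with every component equal to $d$ annihilates $\id_Y-s_d$ (as $d-d\circ d=0$), hence factors as $i_d\circ q_d$ for some $i_d\colon Z_d\to X$; and I set $p_d:=q_d\circ\iota_1\colon X\to Z_d$. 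A short diagram chase gives $i_d\circ p_d=d$, and, using that all the structure maps $q_d\iota_n$ of the telescope coincide, that $p_d\circ i_d$ equals $\id_{Z_d}$ after precomposition with $q_d$ — so $p_d\circ i_d-\id_{Z_d}$ factors through $\Sigma Y$. One also has the two degenerate cases $Z_0=0$ (cone of an identity) and $Z_{\id_X}\simeq X$.

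The device that pins everything down is to pass to $X\oplus X$. The idempotent $\mathrm{diag}(e,f)$ there is conjugate to $\mathrm{diag}(\id_X,0)$ by the \emph{honest} isomorphism (indeed involution) $g:=\smat{e&f\\ f&e}$, since a two-line matrix computation using $e^2=e$, $f^2=f$, $ef=fe=0$ gives $g^2=\id$ and $g\cdot\mathrm{diag}(e,f)=\mathrm{diag}(\id_X,0)\cdot g$. Because the telescope construction is additive and takes direct sums of telescopes to direct sums of homotopy colimits, applying it yields
\[
Z_e\oplus Z_f\;\simeq\;\operatorname{hocolim}\big(X\oplus X\xrightarrow{\mathrm{diag}(e,f)}\cdots\big)\;\simeq\;\operatorname{hocolim}\big(X\oplus X\xrightarrow{\mathrm{diag}(\id_X,0)}\cdots\big)\;\simeq\;Z_{\id_X}\oplus Z_0\;\simeq\;X .
\]
Tracking $e$ through this chain — via the naturality of $\operatorname{hocolim}$ and the conjugation by $g$ — identifies $e\colon X\to X$ with the projection of $Z_e\oplus Z_f$ onto the summand $Z_e$; hence $e$ is split, and $\cat{T}$ is idempotent-complete.

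The main obstacle is precisely the identity $p_d\circ i_d=\id_{Z_d}$ — equivalently, the vanishing of the residual self-map of $Z_d$ that factors through $\Sigma\coprod_{\bbN}X$ — which cannot be forced by purely formal triangulated-category manipulations; the same difficulty is concealed in the ``standard'' fact $Z_{\id_X}\simeq X$. The way around it is the genuine homological input: the Milnor-type exact sequence expressing $\Hom(Z_d,-)$ (and $\Hom(-,Z_d)$) in terms of $\varprojlim$ and $\varprojlim^1$ over the tower of the telescope, together with the vanishing of $\varprojlim^1$ for the Mittag--Leffler towers occurring here (those whose transition maps are split, e.g.\ eventually identities). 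It is at exactly this point that the existence of countable coproducts does its real work; everything else is formal.
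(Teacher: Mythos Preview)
The paper does not supply a proof of this proposition; it is simply quoted from B\"ockstedt--Neeman. Your argument is essentially theirs (the conjugation by the involution $\smat{e&f\\f&e}$ is exactly the device in Neeman's \emph{Triangulated Categories}, Proposition~1.6.8), and your identification of the Mittag--Leffler\,/\,$\varprojlim^1$-vanishing as the one non-formal ingredient is spot on. If anything, the final ``tracking $e$ through the chain'' step can be tightened: since fill-ins on cones are not canonical, the cleanest route is to use that very $\varprojlim^1$-vanishing to show $q_e^*\colon \Hom(Z_e,-)\hook \Hom(Y,-)$ is injective, which upgrades your observation $(p_e i_e-\id_{Z_e})\,q_e=0$ to $p_e i_e=\id_{Z_e}$ (and likewise for~$f$); then $(i_e,i_f)$ and $\smat{p_e\\p_f}$ are mutually inverse and $e=i_e p_e$ is visibly the projection onto~$Z_e$, so the conjugation trick becomes optional.
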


It therefore becomes interesting to know when quotients admit a triangulation.
\begin{Prop}[{Beligiannis~\cite[Section~7]{Beligiannis00} or~\cite{BalmerStevenson20pp}}]
\label{Prop:triangulation-T/S}%
Let $R\colon \cat{T}\to \cat{S}$ be an exact functor of triangulated categories admitting a two-sided adjoint $I\colon \cat{S}\to \cat{T}$. Then the additive quotient $\cat{T}/\add(I(\cat{S}))$ admits a triangulated structure.
\end{Prop}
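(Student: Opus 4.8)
The plan is to exhibit, for the exact functor $R\colon \cat{T}\to\cat{S}$ with two-sided adjoint $I$, a candidate triangulated structure on the quotient $\cat{T}/\add(I(\cat{S}))$ by transporting the triangulation of $\cat{T}$. Write $\cat{S}_I := \add(I(\cat{S}))$ and $Q\colon\cat{T}\to\cat{T}/\cat{S}_I$ for the quotient functor. The first step is the key structural observation: because $I$ is both left and right adjoint to $R$, the subcategory $\cat{S}_I$ is \emph{closed under $\Sigma$ and $\Sigma^{-1}$} (being a thick-closure of the image of an exact functor) and is furthermore what one might call ``doubly Frobenius'' relative to $\cat{T}$; concretely, for every $x\in\cat{T}$ the unit $x\to IR(x)$ is a split monomorphism up to the ideal of maps factoring through $\cat{S}_I$ and dually the counit $IR(x)\to x$ is a split epimorphism up to that ideal — this is the abstract content of the Mackey/adjunction bookkeeping and is what makes the two-sided adjointness (as opposed to one-sided) essential.

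Next I would define the suspension on $\cat{T}/\cat{S}_I$ to be the functor induced by $\Sigma$ on $\cat{T}$; this is well-defined and an auto-equivalence precisely because $\cat{S}_I$ is stable under $\Sigma^{\pm1}$. For the distinguished triangles in $\cat{T}/\cat{S}_I$, I would take those isomorphic (in the quotient) to images $Q(x)\to Q(y)\to Q(z)\to \Sigma Q(x)$ of distinguished triangles of $\cat{T}$. Then one verifies the octahedral axiom and the rotation/completion axioms. Verification of (TR1) and (TR2) is essentially formal once suspension is under control. The heart of the matter is (TR3) and the octahedron: one must show that a morphism in the quotient can be lifted to $\cat{T}$, completed to a triangle there, and that the resulting triangle in the quotient is independent of choices up to (non-unique) isomorphism. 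This is where the ``doubly Frobenius'' property enters: given a morphism $[f]\colon Q(x)\to Q(y)$ represented by $f\colon x\to y$ in $\cat{T}$, and another representative $f' = f + (\text{map through }\cat{S}_I)$, one must produce an isomorphism of cones; the two-sided adjointness guarantees that maps factoring through $\cat{S}_I$ are, up to the quotient ideal, ``split'' on both sides, which is exactly the input needed to kill the ambiguity. I would organize this as: (i) cones of maps that factor through $\cat{S}_I$ become zero in the quotient (using that such a map fits in a triangle whose third term lies in $\add(\Sigma^{\pm}\cat{S}_I) = \cat{S}_I$, hence is killed); (ii) deduce that $[f]=[f']$ implies $Q(\mathrm{cone}(f))\simeq Q(\mathrm{cone}(f'))$; (iii) run the standard octahedral diagram in $\cat{T}$ and push it down.

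The main obstacle I anticipate is step (ii)/(iii): matching cones of different lifts and verifying octahedron in the quotient. The clean way to handle it — and what I would actually write, citing \cite{Beligiannis00} or \cite{BalmerStevenson20pp} for the details — is to recognize $\cat{T}/\cat{S}_I$ as the \emph{stable category} of an exact (indeed Frobenius-type) structure on $\cat{T}$ whose projective-injective objects are exactly $\cat{S}_I$: one equips $\cat{T}$ with the exact structure in which the admissible exact sequences are the (images of) distinguished triangles, checks that $\cat{S}_I$ has enough projectives and injectives and that projectives coincide with injectives (this coincidence is precisely the two-sided adjointness of $I$), and then invokes Happel's theorem that the stable category of a Frobenius exact category is triangulated. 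This repackages all the diagram-chasing into one citable black box and isolates the only genuinely new verification — that $\cat{S}_I$ is simultaneously the class of enough-projectives and enough-injectives — which again is a direct consequence of $I\dashv R\dashv I$ together with the unit/counit triangles of the adjunctions.
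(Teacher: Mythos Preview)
The paper does not prove this proposition; it simply records the statement and cites \cite{Beligiannis00} and \cite{BalmerStevenson20pp} for the proof. So there is no ``paper's proof'' to compare with. That said, your sketch contains a genuine error that the paper itself flags in the Remark immediately following the statement.

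Your main proposed construction---let the suspension on $\cat{T}/\cat{S}_I$ be the one induced by the suspension $\Sigma$ of~$\cat{T}$, and declare the distinguished triangles to be the $Q$-images of distinguished triangles of~$\cat{T}$---does not work. As the paper's Remark points out, the quotient functor $Q\colon \cat{T}\to\cat{T}/\cat{S}_I$ is \emph{not} exact and in general does \emph{not} commute with suspension; the triangulated structure one obtains on the quotient is ``exotic'' and has a different shift. Your step~(i) exposes the problem concretely: it is simply false that cones of maps factoring through~$\cat{S}_I$ lie in~$\cat{S}_I$. (Take $f=0\colon x\to y$; it factors through~$0\in\cat{S}_I$, but its cone is $y\oplus\Sigma x$, which has no reason to be an $I$-object.) Consequently two lifts $f,f'$ of the same morphism in the quotient can have non-isomorphic cones in $\cat{T}/\cat{S}_I$, and the ``images of triangles'' candidate class is not well-behaved.

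Your fallback via a Frobenius exact structure is the correct instinct and is essentially Beligiannis's approach, but it also needs a fix: the admissible exact sequences are \emph{not} all distinguished triangles (with that choice every object would be projective-injective and the stable category would collapse), but rather a relative class---for instance the $R$-split triangles, equivalently the triangles for which the maps into $I$-objects behave like split epimorphisms. With that relative exact structure, the projective and the injective objects both coincide with $\cat{S}_I$ (here is exactly where $I\dashv R\dashv I$ is used, via the unit $x\to IR(x)$ and counit $IR(x)\to x$), so the structure is Frobenius and Happel's theorem yields a triangulation on the stable category $\cat{T}/\cat{S}_I$. In this picture the suspension of~$x$ is the cone of the unit $x\to IR(x)$, not $\Sigma x$, which explains the paper's warning that $Q$ fails to commute with suspension.
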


\begin{Rem}
Two comments are in order. First, the above quotient is \emph{not} a Verdier quotient. Second, the canonical functor $\cat{T}\onto \cat{T}/\add(I(\cat{S}))$ is usually not exact; it does not even commute with suspension in general. So the triangulated structure on $\cat{T}/\add(I(\cat{S}))$ is somewhat exotic. But the beauty of \Cref{Prop:BN} is that there is no assumption made on the triangulated structure: any one will do.
\end{Rem}

%
\section{Mackey squares and Mackey 2-functors}
\label{sec:recollections}%

%
\begin{Rem}
As indicated in the Introduction, instead of finite groups we use finite groupoids, \ie finite categories in which every morphism is an isomorphism. Every finite group~$G$ is seen as a finite groupoid~$G$ with one object and, up to equivalence, a finite groupoid is simply a disconnected union of groups. We denote by $\gpd$ the 2-category of finite groupoids, functors (1-morphisms) and natural transformations (2-morphisms). We often speak of \emph{morphisms} of groupoids $u\colon H\to G$ instead of functors, because there are many other functors around and also to evoke the special case of group homomorphisms. We write $H\into G$ to indicate faithfulness.
\end{Rem}

The us recall the (iso)comma construction in~$\gpd$, which is a 2-categorical variation on the concept of pullback.
\begin{Rec}
\label{Rec:isocomma}%
Let $i\colon H\to G$ and $u\colon K\to G$ be two morphisms of finite groupoids with same target. The \emph{isocomma} groupoid~$\isoc HKG$, also denoted~$(i/u)$
\begin{equation}
\label{eq:isocomma}%
\vcenter{
\xymatrix@C=14pt@R=14pt{
& \isoc HKG \ar[dl]_-{\pr_1} \ar[dr]^-{\pr_2}
\\
H \ar[dr]_-{i} \ar@{}[rr]|-{\isocell{\gamma_{H,K}}}
&& K \ar[dl]^-{u}
\\
& G
}}
\end{equation}
is defined by letting the objects of~$\isoc HKG$ consist of triples $(x,y,g)$ where $x$ and $y$ are objects of~$H$ and~$K$ respectively and $g\colon i(x)\isoto u(y)$ is an isomorphism in~$G$; morphisms $(x,y,g)\to (x',y',g')$ consist of pairs $(h,k)$ where $h\colon x\to x'$ and $k\colon y\to y'$ are morphisms in~$H$ and~$K$ such that $g'i(h)=u(k)g$. This groupoid $\isoc HKG$ comes equipped with two morphisms~$\pr_1$ and~$\pr_2$ as in~\eqref{eq:isocomma}, namely the obvious projections onto the~$H$- and~$K$-parts. Finally the 2-cell $\gamma_{H,K}\colon i\circ\pr_1\isoTo u\circ\pr_2$ is given by~$(\gamma_{H,K})_{(x,y,g)}=g$. This construction enjoys a universal property, see~\cite[\S\,2.1]{BalmerDellAmbrogio18pp}. In particular, any 2-cell
\begin{equation}
\label{eq:Mackey-square}%
\vcenter{
\xymatrix@C=14pt@R=14pt{
& L \ar[dl]_-{v} \ar[dr]^-{j}
\\
H \ar[dr]_-{i} \ar@{}[rr]|-{\isocell{\gamma}}
&& K \ar[dl]^-{u}
\\
& G
}}
\end{equation}
induces a functor $\langle v,j,\gamma\rangle\colon L\to \isoc HKG$, mapping $z\in L$ to~$(v(z),j(z),\gamma_z)$. When this functor $L\to \isoc HKG$ is an equivalence the square~\eqref{eq:Mackey-square} is called a \emph{Mackey square}; such squares enjoy a (weaker) universal property, see \cite{BalmerDellAmbrogio18pp} for details.
\end{Rec}

\begin{Rem}
Instead of the 2-categorical notation~$(i/u)$ adopted in~\cite{BalmerDellAmbrogio18pp}, we systematically write $\isoc HKG$ in this paper to avoid conflict with additive quotients. Writing $\isoc HKG$ requires the morphisms~$i$ and~$u$ to be unambiguous from context.
\end{Rem}

\begin{Exa}
\label{Exa:isoc-triv}%
We have canonical equivalences $H\cong\isoc HGG$ and $H\cong \isoc GHG$ given respectively by $\langle \Id_H,i,\id_i\rangle$ and~$\langle i,\Id_H,\id_i\rangle$.
\end{Exa}

\begin{Exa}[{\cite[Remark~2.2.7]{BalmerDellAmbrogio18pp}}]
\label{Exa:groupist-translation}%
Here is the link with the `usual' Mackey double-coset formula. Let $H,K\le G$ be subgroups of a finite group, considered as one-object groupoids. Even in that case, $\isoc HKG$ is usually not connected, \ie it does not boil down to a single group. In fact, $\isoc HKG$ has one connected component for each class in~$\doublequot HGK$. The choice of a representative in each such coset (\ie one object in each component) yields a non-canonical equivalence
\[
\coprod_{[g]\in \doublequot HGK} \! H\cap{}^g K
\ \isotoo \ \isoc HKG
\]
that can be used to turn our canonical Mackey formulas for groupoids into the more traditional but non-canonical Mackey double-coset formulas for groups.
\end{Exa}

\begin{Lem} \label{Lem:MS-sum}
The isocomma construction commutes with coproducts, in the sense that for any $H,K,L\rightarrowtail G$ there are canonical isomorphisms of groupoids
\[
\Isoc{H}{(K\sqcup L)}{G} \cong \Isoc H K G \sqcup \Isoc H L G
\quad \textrm{ and } \quad
\Isoc{(K\sqcup L)}{H}{G} \cong \Isoc  K H G \sqcup \Isoc L H G
\]
which are compatible with the projections and the 2-cells.
\end{Lem}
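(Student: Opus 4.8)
The plan is to use the explicit description of the isocomma groupoid from \Cref{Rec:isocomma} and simply observe that its defining data — triples $(x,y,g)$ and pairs $(h,k)$ — distributes over a disjoint union in either variable. I will treat the first isomorphism, $\Isoc{H}{(K\sqcup L)}{G} \cong \Isoc H K G \sqcup \Isoc H L G$, in detail; the second one follows by the symmetric argument (or, if one prefers, by applying the first to the opposite morphisms, though it is just as quick to redo it directly).

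First I would recall that an object of $K\sqcup L$ is, tautologically, either an object of $K$ or an object of $L$, and similarly a morphism of $K\sqcup L$ is a morphism of $K$ or a morphism of $L$ — there are no morphisms between the two components. The morphism $u\sqcup u'\colon K\sqcup L\to G$ restricts to $u$ on $K$ and to $u'$ on $L$. Hence an object $(x,y,g)$ of $\Isoc{H}{(K\sqcup L)}{G}$, with $x\in H$, $y\in K\sqcup L$, and $g\colon i(x)\isoto (u\sqcup u')(y)$, falls into exactly one of two disjoint cases according to whether $y$ lies in $K$ or in $L$; in the first case it is precisely an object of $\Isoc H K G$ and in the second an object of $\Isoc H L G$. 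The same case distinction applies to a morphism $(h,k)\colon (x,y,g)\to(x',y',g')$: since $k\colon y\to y'$ forces $y$ and $y'$ to lie in the same component, the whole morphism lives entirely in $\Isoc H K G$ or entirely in $\Isoc H L G$. This defines a bijective-on-objects, bijective-on-morphisms functor $\Isoc{H}{(K\sqcup L)}{G} \to \Isoc H K G \sqcup \Isoc H L G$, hence an isomorphism of groupoids. Compatibility with the projections $\pr_1,\pr_2$ is immediate since on each summand $\pr_1$ returns $x$ and $\pr_2$ returns $y$, matching the projections of the corresponding isocomma; compatibility with the 2-cells is equally immediate, as $\gamma_{H,K\sqcup L}$ evaluated at $(x,y,g)$ is $g$, which is exactly the value of $\gamma_{H,K}$ or $\gamma_{H,L}$ at that object.

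The second isomorphism is handled identically, splitting instead on the $H$-object and $H$-morphism components of $\Isoc{(K\sqcup L)}{H}{G}$: an object $(x,y,g)$ with $x\in K\sqcup L$ lies in $\Isoc K H G$ or in $\Isoc L H G$ according to the component of $x$, and likewise for morphisms, since $h\colon x\to x'$ forces $x,x'$ into the same component. Again bijectivity on objects and morphisms is transparent, and compatibility with $\pr_1,\pr_2,\gamma$ is by construction.

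There is essentially no obstacle here: the statement is a bookkeeping consequence of the concrete model of the isocomma, together with the fact that a coproduct of groupoids has \emph{no} morphisms across components, so that any coherent family of data (objects, morphisms, and the filler $g$) forced to stay within a single component of the variable summand. The only thing to be mildly careful about is to phrase ``canonical'' correctly — the isomorphisms are the evident identity-on-underlying-data maps, so there is no choice involved, in contrast to the non-canonical decomposition over double cosets recalled in \Cref{Exa:groupist-translation}. If one wanted a slicker proof, one could instead invoke the universal property of the isocomma from \cite[\S\,2.1]{BalmerDellAmbrogio18pp} and the fact that $\gpd$-coproducts are computed componentwise, but the direct verification is short enough that it is not worth the detour.
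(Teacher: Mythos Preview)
Your proof is correct and is exactly the direct verification the paper has in mind: the paper's own proof is simply ``Easy exercise.'' You have faithfully carried out that exercise using the explicit description of the isocomma from \Cref{Rec:isocomma}, and there is nothing to add.
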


\begin{proof}
Easy exercise.
\end{proof}

\begin{Lem} \label{Lem:MS-2-out-of-3}
In a configuration of square 2-cells as follows
\[
\xymatrix@C=18pt@R=14pt{
&& \ar[dl] \ar[dr] & \\
& \ar[dl] \ar[dr] \ar@{}[rr]|{\isocell{\sigma}} && \ar[dl] \\
\ar[dr] \ar@{}[rr]|{\isocell{\gamma}} && \ar[dl]& \\
&&&
}
\]
if both $\gamma$ and the composite square are Mackey squares, then so is~$\sigma$.
\end{Lem}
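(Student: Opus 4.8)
The plan is to reduce the statement to two elementary properties of the isocomma construction, invoking throughout the characterization in \Cref{Rec:isocomma}: a square is a Mackey square precisely when its comparison functor to the relevant isocomma groupoid is an equivalence. First I fix notation for the displayed configuration. Write the top square $\sigma$ with apex $L$, feet $H$ and $K$, target $P$, and edges $a\colon L\to H$, $b\colon L\to K$, $i\colon H\to P$, $u\colon K\to P$, so that $\sigma\colon ia\Rightarrow ub$; and the bottom square $\gamma$ with apex $H$, feet $H'$ and $P$, target $R$, and edges $a'\colon H\to H'$, $p\colon H'\to R$, $r\colon P\to R$, so that $\gamma\colon pa'\Rightarrow ri$. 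Thus the two squares share the edge $i\colon H\to P$, and the composite square has apex $L$, feet $H'$ and $K$, target $R$, edges $a'a$, $b$, $p$, $ru$, and 2-cell $\tau$ (the pasting composite) with $\tau_x=r(\sigma_x)\circ\gamma_{a(x)}$ for $x\in L$. The three assertions in play are that the comparison functors $\langle a,b,\sigma\rangle\colon L\to\isoc HKP$, $\langle a',i,\gamma\rangle\colon H\to\isoc{H'}{P}{R}$ and $\langle a'a,b,\tau\rangle\colon L\to\isoc{H'}{K}{R}$ are equivalences; we are given the last two and want the first.

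The two properties I need are: (i) the isocomma is functorial in each of its two variables and sends equivalences to equivalences, which is immediate from its universal property (\cite[\S\,2.1]{BalmerDellAmbrogio18pp}); and (ii) a cancellation property, namely that for morphisms $H'\to R\leftarrow P$ and $K\to P$ the canonical functor
\[
\isoc{(\isoc{H'}{P}{R})}{K}{P}\ \longrightarrow\ \isoc{H'}{K}{R},\qquad \big((x,y,g),k,\varphi\big)\longmapsto\big(x,k,r(\varphi)\circ g\big),
\]
formed using $\pr_2\colon\isoc{H'}{P}{R}\to P$ and $u\colon K\to P$ on the left and $p\colon H'\to R$, $ru\colon K\to R$ on the right, is an equivalence, a quasi-inverse being $(x,k,\psi)\mapsto\big((x,u(k),\psi),k,\id_{u(k)}\big)$. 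One of the two composites is the identity, and the other differs from the identity by the evident natural isomorphism with component $\big((\id_x,\varphi),\id_k\big)$; that both assignments respect morphisms is the routine diagram chase with the defining relations of \Cref{Rec:isocomma}. (Property (ii) is essentially the associativity of the isocomma and could instead be quoted from \cite{BalmerDellAmbrogio18pp}.)

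These combine at once. Since $\gamma$ is a Mackey square, $\langle a',i,\gamma\rangle\colon H\to\isoc{H'}{P}{R}$ is an equivalence, and $\pr_2\circ\langle a',i,\gamma\rangle=i$ on the nose; so by~(i), applied in the first variable of the isocomma, it induces an equivalence $\isoc HKP\to\isoc{(\isoc{H'}{P}{R})}{K}{P}$ sending $(h,k,g)\mapsto\big((a'(h),i(h),\gamma_h),k,g\big)$. Composing with the cancellation equivalence of~(ii) produces an equivalence $F\colon\isoc HKP\to\isoc{H'}{K}{R}$ given by $(h,k,g)\mapsto\big(a'(h),k,r(g)\circ\gamma_h\big)$. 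One then checks directly that $F\circ\langle a,b,\sigma\rangle=\langle a'a,b,\tau\rangle$: on an object $x\in L$ both sides return $\big(a'(a(x)),b(x),r(\sigma_x)\circ\gamma_{a(x)}\big)$, and the verification on morphisms is just as immediate. Since $\langle a'a,b,\tau\rangle$ is an equivalence (the composite square is a Mackey square) and $F$ is an equivalence, $\langle a,b,\sigma\rangle$ is an equivalence, \ie $\sigma$ is a Mackey square. I expect the only genuine work to be property~(ii) --- a short but slightly fiddly check straight from the definition of the isocomma --- together with the bookkeeping needed to see that the comparison functors above agree on the nose and not merely up to isomorphism, which is what reduces the final step to a one-line implication; everything else is formal from the universal property in \Cref{Rec:isocomma}.
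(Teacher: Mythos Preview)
Your argument is correct. The paper's own proof is a single sentence referring to the universal property of Mackey squares in \cite[Definition~2.1.1 and \S\,2.2]{BalmerDellAmbrogio18pp}; your proposal is precisely a careful unpacking of that reference, via the comparison functors to isocommas and the associativity equivalence~(ii), and lands on the same implication the paper intends.
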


\begin{proof}
Straightforward from the universal property of a Mackey square; see~\cite[Definition~2.1.1 and \S\,2.2]{BalmerDellAmbrogio18pp}.
\end{proof}

\begin{Not} \label{Not:strict-fun-isocommas}
Given two isocomma squares
\[
\vcenter{
\xymatrix@C=14pt@R=14pt{
& \isoc EFG \ar[dl]_-{\pr_1} \ar[dr]^-{\pr_2}
\\
E \ar[dr]_-{u} \ar@{}[rr]|-{\isocell{\gamma}}
&& F \ar[dl]^-{v}
\\
& G
}}
\qquadtext{and}
\vcenter{
\xymatrix@C=14pt@R=14pt{
& \isoc {E'}{F'}{G'} \ar[dl]_-{\pr_1} \ar[dr]^-{\pr_2}
\\
E' \ar[dr]_-{u'} \ar@{}[rr]|-{\isocell{\gamma'}}
&& F' \ar[dl]^-{v'}
\\
&G'
}}
\]
as well as morphisms $i\colon G\to G'$, $k\colon E\to E'$ and $\ell\colon F\to F'$ making the diagram
\[
\xymatrix@C=18pt@R=4pt{
E \ar[dr]^u \ar[dd]_-k & & F \ar[dl]_v \ar[dd]^-\ell \\
& G \ar[dd]_-i & \\
E' \ar[dr]_{u'} & & F' \ar[dl]^{v'} \\
&G' &
}
\]
(strictly) commute, we will write
\[
\Isoc{k}{\ell}{i} \colon \Isoc{E}{F}{G} \too \Isoc{E'}{F'}{G'}
\]
for the canonical morphism with components $\langle  k \pr_1 , \ell \pr_2 , i \gamma  \rangle $. Explicitly, this functor $\isoc{k}{\ell}{i}$ maps $(x,y,g)$ to $(k(x),\ell(y),i(g))$. If the functor $i$ happens to be the identity $i= \Id_G$, we will write $\isoc{k}{\ell}{G}$, and similarly with $k$ and~$\ell$.
\end{Not}

\tristars

\begin{Not}
\label{Not:groupoids}%
It is convenient to consider other 2-categories of groupoids, like $\groupoidf$ the subcategory of~$\gpd$ in which we only take faithful morphisms~$H\into G$. More interesting is the 2-category $\gpdGzero$ of~\cite[Definition~B.0.6]{BalmerDellAmbrogio18pp} consisting of groupoids $(G,i_G)$ together with a chosen faithful embedding $i_G\colon G\into G_0$ into a fixed `ambient' groupoid~$G_0$. This allows us to treat in the same breath the `global' theory for the 2-category~$\GG=\gpd$ (or $\GG=\groupoidf$) and the `$G_0$-local' theory for a given~$G_0$ by using~$\GG=\gpdGzero$. In glorious generality, $\GG$ could be any 2-category as in~\cite[Hypothesis~5.1.1]{BalmerDellAmbrogio18pp} but the reader can keep one of the above in mind:
\[
\GG=\groupoidf\qquadtext{or}
\GG=\gpdGzero.
\]
\end{Not}

\begin{Rec}
\label{Rec:2-Mackey}%
Let $\GG$ be our 2-category of finite groupoids of interest, as above. A \emph{Mackey 2-functor} $\MM\colon \GG^{\op}\to \ADD$ is a strict 2-functor taking values in additive categories and additive functors satisfying the following four axioms:
\begin{enumerate}[(\textrm{Mack}~1)]
\item
Additivity: $\MM(G_1\sqcup G_2)=\MM(G_1)\oplus \MM(G_2)$.
\smallbreak
\item
Existence of adjoints: For every faithful $i\colon H\into G$, the restriction functor $i^*=\MM(i)\colon \MM(G)\to \MM(H)$ admits a left adjoint $i_!$ and a right adjoint~$i_*$.
\smallbreak
\item
For every Mackey square~\eqref{eq:Mackey-square}, the mates of the 2-cell~$\gamma^*$ with respect to the adjunctions of (Mack~2) give Base-Change isomorphisms
\[
j_!\circ v^* \isoTo u^* \circ i_!
\qquadtext{and}
u^*\circ i_* \isoTo j_*\circ v^*\,.
\]
\smallbreak
\item
Ambidexterity: Induction and coinduction coincide: $i_!\simeq i_*$.
\end{enumerate}
If $G_0$ is a fixed finite group, a \emph{Mackey 2-functor for~$G_0$} is one where $\GG=\gpdGzero$.
\end{Rec}

\begin{Rem} \label{Rem:can-Theta}
The reader will find a detailed discussion of these axioms, and their beautification, in~\cite[Remark~1.1.10, \S\,1.2 and Chapter~3]{BalmerDellAmbrogio18pp}.
In particular, if there exists \emph{any} isomorphism $i_!\simeq i_*$ as in (Mack~4), then a certain canonical and well-behaved natural transformation $\Theta_i\colon i_!\Rightarrow i_*$ is also invertible and lets us combine $i_!$ and $i_*$ into a single functor, in the following denoted by~$i_*$, adjoint to $i^*$ on both sides and satisfying further extra properties \cite[Theorem~1.2.1]{BalmerDellAmbrogio18pp}.
\end{Rem}

\begin{Rem}
One virtue of Mackey 2-functors is the profusion of examples extending beyond representation theory; see~\cite[Chapter~4]{BalmerDellAmbrogio18pp} or~\Cref{sec:examples}.
\end{Rem}

%
\section{The operator~$\partial$ on groupoids}
\label{sec:partial}%

Let us fix a faithful morphism between two finite groupoids
\[H\ointo{i} G.\]
The construction~$\partial_i(-,-)$ introduced in this section measures the difference between isocommas $\isoc{-}{-}{H}$ and~$\isoc{-}{-}{G}$. See \Cref{Rec:isocomma} and \Cref{Not:strict-fun-isocommas}.

\begin{Prop}
\label{Prop:partial-prep}%
Let $E\ointo{\iota_E}H$ and $F\ointo{\iota_F}H$ be faithful and consider the isocommas
\begin{equation}
\label{eq:isoc-FDH-FDG}%
\vcenter{
\xymatrix@C=14pt@R=14pt{
& \isoc EFH \ar[dl]_-{\pr_1} \ar[dr]^-{\pr_2}
\\
E \ar[dr]_-{\iota_E} \ar@{}[rr]|-{\isocell{\kappa_{E,F}}}
&& F \ar[dl]^-{\iota_F}
\\
& H
}}
\qquadtext{and}
\vcenter{
\xymatrix@C=14pt@R=14pt{
& \isoc EFG \ar[dl]_-{\pr_1} \ar[dr]^-{\pr_2}
\\
E \ar[dr]_-{i\iota_E} \ar@{}[rr]|-{\isocell{\gamma_{E,F}}}
&& F.\! \ar[dl]^-{i\iota_F}
\\
&G
}}
\end{equation}
Then the functor $\Isoc EF{i}\colon \Isoc EFH\to \Isoc EFG$ is fully-faithful.
\end{Prop}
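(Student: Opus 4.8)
The plan is to argue directly from the explicit description of isocomma groupoids recalled in \Cref{Rec:isocomma}, since full-faithfulness concerns only Hom-sets and the behaviour on objects is irrelevant.

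First I would fix two objects $a=(x,y,g)$ and $a'=(x',y',g')$ of $\Isoc EFH$, so that $x,x'$ are objects of $E$, $y,y'$ are objects of $F$, and $g\colon\iota_E(x)\isoto\iota_F(y)$, $g'\colon\iota_E(x')\isoto\iota_F(y')$ are isomorphisms in~$H$. By the description of the comparison functor in \Cref{Not:strict-fun-isocommas}, the functor $\Isoc EF{i}$ sends $a$ and $a'$ to $(x,y,i(g))$ and $(x',y',i(g'))$ --- the isomorphisms now being read inside $G$ via $i$ --- and sends a morphism of the shape $(h,k)$ to the pair $(h,k)$ itself.

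Next I would unwind the two relevant Hom-sets. A morphism $a\to a'$ in $\Isoc EFH$ is a pair $(h,k)$ with $h\colon x\to x'$ in $E$ and $k\colon y\to y'$ in $F$ subject to the relation $g'\,\iota_E(h)=\iota_F(k)\,g$ in~$H$; a morphism $\Isoc EF{i}(a)\to\Isoc EF{i}(a')$ in $\Isoc EFG$ is a pair $(h,k)$ of exactly the same kind, subject instead to $i(g')\cdot i\iota_E(h)=i\iota_F(k)\cdot i(g)$ in~$G$, that is, $i\big(g'\,\iota_E(h)\big)=i\big(\iota_F(k)\,g\big)$. Since $\Isoc EF{i}$ acts as the identity on the underlying pairs $(h,k)$, the induced map on Hom-sets is injective; and for surjectivity I would invoke that $i\colon H\into G$ is faithful, so the equality $i\big(g'\,\iota_E(h)\big)=i\big(\iota_F(k)\,g\big)$ holds in $G$ if and only if $g'\,\iota_E(h)=\iota_F(k)\,g$ holds in~$H$. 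Hence any pair $(h,k)$ defining a morphism on the $G$-side already defines a morphism $a\to a'$ on the $H$-side, the Hom-map is bijective, and $\Isoc EF{i}$ is fully-faithful.

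I do not expect any genuine obstacle here: the argument is a pure unravelling of definitions whose only substantive input is that a faithful functor reflects equalities of parallel morphisms. (Note that faithfulness of $\iota_E$ and $\iota_F$ plays no role in this particular statement; it will matter only in the subsequent analysis of the operator $\partial_i$.)
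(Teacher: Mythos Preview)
Your proof is correct and follows essentially the same approach as the paper's own proof: unwind the Hom-sets on both sides, observe that the functor is the identity on the underlying pairs, and use faithfulness of~$i$ to see that the two compatibility conditions are equivalent. Your parenthetical remark that the faithfulness of $\iota_E$ and $\iota_F$ is not used here is accurate and worth noting.
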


\begin{proof}
The functor $\isoc EF{i}$ maps an object $(x,y,h)$ to $(x,y,i(h))$, for all~$x\in \Obj (E)$, $y\in \Obj(F)$ and $h\colon \iota_E(x)\isoto \iota_F(y)$ in~$H$, and it is the identity on morphisms. More precisely, morphisms $(x,y,h)\to (x',y',h')$ in~$\isoc EFH$ and morphisms $(x,y,i(h))\to (x',y',i(h'))$ in~$\isoc EFG$ both consist of pairs of morphisms $(e\in E(x,x'),f\in F(y,y'))$ such that, respectively, $h'\iota_E(e)=\iota_F(f)h$ or $i(h')i\iota_E(e)=i\iota_F(f)i(h)$. But these two conditions are equivalent since $i\colon H\into G$ is faithful.
\end{proof}

\begin{Rem}
In fact, the image of~$\isoc EFH$ in~$\isoc EFG$ is even \emph{replete}, \ie closed under isomorphisms. So it is exactly a union of connected components of~$\isoc EFG$.
\end{Rem}

\begin{Not}
\label{Not:partial}%
Let $E,F\into H$ as above. By \Cref{Prop:partial-prep}, the groupoid $\isoc EFH$ is equivalent to a union of connected components of~$\isoc EFG$. We denote by
\[
\partial_i(E,F)
\]
the full subgroupoid of~$\isoc EFG$ consisting of the union of the connected components which do not meet the image of~$\isoc EFH$. By construction, we thus have
\begin{equation}
\label{eq:partial-def}%
\Isoc EFG\ \cong \ \Isoc EFH \ \sqcup \ \partial_i(E,F).
\end{equation}
Of course, the groupoid $\partial_i(E,F)$ not only depends on~$i\colon H\into G$ and on~$E$ and~$F$ but really depends on the embeddings~$\iota_E\colon E\into H$ and $\iota_F\colon F\into H$.
\end{Not}

\begin{Rem}
\label{Rem:partial-proj}%
The two projections~$\pr_1\colon \Isoc EFG\to E$ and $\pr_2\colon \Isoc EFG\to F$ and the 2-cell $\gamma_{E,F}\colon i\iota_E\,\pr_1\Rightarrow i\iota_F\pr_2$ of~\eqref{eq:isoc-FDH-FDG} restrict to~$\partial_i(E,F)$
\[
\xymatrix@C=14pt@R=14pt{
&\partial_i(E,F) \ar[ld]_-{\pr_1} \ar[rd]^-{\pr_2} \ar@{}[dd]|-{\isocell{\gamma_{E,F}}}
\\
E \ar[rd]_-{i\iota_E} && F \ar[ld]^-{i\iota_F}
\\
& G
}
\]
and we keep the same notation for these restrictions when no confusion ensues.
\end{Rem}

\begin{Exas}
\label{Exas:partial-HD}%
In our application, we shall be given three groupoids $D\ointo{j} H\ointo{i} G$. In that setting, there will be three pairs $(E,F)$ to which we need to apply the $\partial_i(E,F)$ construction, namely $(D,D)$, $(H,D)$ and $(H,H)$.
\begin{enumerate}[(1)]
\item
\label{Exa:partial-DD}%
Taking $E=F=D\ointo{j}H$, we have a decomposition
\[
\Isoc DDG\cong \Isoc DDH\sqcup \partial_i(D,D).
\]
The groupoid~$\partial_i(D,D)$ will play an important role in the Green equivalence.
\smallbreak
\item
\label{Exa:partial-HD}%
Taking $E=H$ itself and $F=D\into H$, we have a decomposition
\[
\Isoc HDG\cong \Isoc HDH\sqcup \partial_i(H,D)\cong D\sqcup \partial_i(H,D),
\]
using the equivalence $D\isoto \Isoc HDH$ of \Cref{Exa:isoc-triv}.
\item
\smallbreak
\label{Exa:partial-HH}%
Taking $E=F=H\ointo{\id}H$, we have as a special case of~\eqref{Exa:partial-HD} a decomposition
\[
\Isoc HHG\cong \Isoc HHH\sqcup \partial_i(H,H)\cong H\sqcup \partial_i(H,H).
\]
\end{enumerate}
\end{Exas}

\begin{Rem}
\label{Rem:groupist-translation}%
The reader can keep the following special case in mind throughout the article. Suppose that $D\ointo{j} H\ointo{i} G$ are subgroup inclusions for some good old finite groups $G$, $H$ and~$D$. Then the three $\partial$-constructions of \Cref{Exas:partial-HD} are equivalent to the following coproducts of finite groups (\cf \Cref{Exa:groupist-translation}):
\[
\coprod_{[g]\in \doublequot DGD\atop g\notin H}\!\!\!\!\!\!\!\! D\cap\,{}^g\!D \simeq \partial_i(D,D),
\quad\coprod_{[g]\in \doublequot HGD\atop g\notin H}\!\!\!\!\!\!\!\! H\cap\,{}^g\!D \simeq \partial_i(H,D),
\quad\coprod_{[g]\in \doublequot HGH\atop g\notin H}\!\!\!\!\!\!\!\! H\cap\,{}^g\!H \simeq \partial_i(H,H).
\]
In the representation-theoretic literature, the first two coproducts correspond to two collections of subgroups typically denoted $\mathfrak{X}$ and $\mathfrak{Y}$ respectively; see~\eqref{eq:X}. We shall sometimes write $\mathfrak{U} :=\{H\cap {}^{g\!}H\mid g\in G\smallsetminus H\}$ for the third one.
\end{Rem}

Sooner or later we are going to apply the constructions $\partial_i(E,F)$ to morphisms $E,F\into H$ which are themselves obtained as isocommas or as $\partial_i(E',F')$ for other $E',F'\into H$. Since there are more than one faithful functor out of such more complicated objects into~$H$, we need to specify which one we use.

\begin{Conv}
\label{Conv:partial}%
In all cases, we tacitly embed $\isoc EFG$ into~$H$ via the \emph{first} projection $\Isoc EFG\ointo{\pr_1}E \into H$ and similarly for the subgroupoid~$\partial_i(E,F)$ of~$\isoc EFG$.
\end{Conv}

\begin{Prop}[Diagonality]
\label{Prop:diag}%
Let $E\ointo{k} E'\ointo{\iota_{E'}} H$ with $\iota_E=\iota_{E'}k$ and let\break $F\ointo{\ell} F'\ointo{\iota_{F'}} H$ with $\iota_F=\iota_{F'}\ell$. Consider the groupoids $\partial_i(E,F)$ and $\partial_i(E',F')$ and the associated decompositions as in~\eqref{eq:partial-def}
\[
\Isoc EFG\cong \Isoc EFH \sqcup \partial_i(E,F)
\qquadtext{and}
\Isoc {E'}{F'}G\cong \Isoc {E'}{F'}H \sqcup \partial_i(E',F').
\]
Then the canonical functor~$\isoc k{\ell}G\colon \Isoc EFG\to \Isoc {E'}{F'}G$ induced by~$k\colon E\to E'$ and $\ell\colon F\to F'$ is `diagonal', \ie it respects the above decompositions:
\[
\xymatrix@R=2em{
\isoc EFG \ar@{}[r]|-{\cong} \ar[d]_-{\isoc k{\ell}G}
& \isoc EFH \ar@{}[r]|-{\sqcup} \ar[d]_-{\isoc k{\ell}H}
& \partial_i(E,F) \ar@{..>}[d]_-{\exists\,!}^-{\partial_i(k,\ell)}
\\
\isoc {E'}{F'}G \ar@{}[r]|-{\cong}
& \isoc {E'}{F'}H \ar@{}[r]|-{\sqcup}
& \partial_i(E',F')
}
\]
hence defines a unique functor $\partial_i(k,\ell)\colon \partial_i(E,F)\into \partial_i(E',F')$, which is faithful.
\end{Prop}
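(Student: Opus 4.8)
The plan is to obtain $\partial_i(k,\ell)$ simply as the restriction of $\isoc{k}{\ell}{G}$ to a union of connected components; all the content lies in checking that this restriction lands where it should.

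First I would fix notation matching~\eqref{eq:partial-def}. Write $\cat{I}\subseteq\Isoc{E}{F}{G}$ for the image of the fully faithful functor $\Isoc{E}{F}{i}\colon \Isoc{E}{F}{H}\into\Isoc{E}{F}{G}$ of~\Cref{Prop:partial-prep}; it is replete (by the remark following that proposition), hence a union of connected components, so that under~\eqref{eq:partial-def} the summand $\partial_i(E,F)$ is identified with the full subgroupoid of $\Isoc{E}{F}{G}$ on the objects \emph{not} lying in~$\cat{I}$. Define $\cat{I}'\subseteq\Isoc{E'}{F'}{G}$ and $\partial_i(E',F')$ likewise. Recall from~\Cref{Not:strict-fun-isocommas} that $\isoc{k}{\ell}{G}$ sends $(x,y,g)\mapsto (k(x),\ell(y),g)$ and $(e,f)\mapsto (k(e),\ell(f))$, while $\Isoc{E}{F}{i}$ sends $(x,y,h)\mapsto (x,y,i(h))$, and similarly for the primed versions.

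Second, I would record the commuting square $\Isoc{E'}{F'}{i}\circ\isoc{k}{\ell}{H}=\isoc{k}{\ell}{G}\circ\Isoc{E}{F}{i}$ of functors $\Isoc{E}{F}{H}\to\Isoc{E'}{F'}{G}$: by the formulas above both composites send $(x,y,h)\mapsto (k(x),\ell(y),i(h))$ and $(e,f)\mapsto (k(e),\ell(f))$, where one uses $\iota_E=\iota_{E'}k$ and $\iota_F=\iota_{F'}\ell$ to see that an isomorphism $h\colon\iota_E(x)\isoto\iota_F(y)$ in $H$ is literally the same datum as $h\colon\iota_{E'}(k(x))\isoto\iota_{F'}(\ell(y))$. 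Since $\Isoc{E}{F}{i}$ is an isomorphism of groupoids onto the full subgroupoid $\cat{I}$ (being injective on objects, fully faithful, and surjective onto $\Obj(\cat{I})$ by definition), and likewise $\Isoc{E'}{F'}{i}$ onto $\cat{I}'$, this square says precisely that $\isoc{k}{\ell}{G}$ carries $\cat{I}$ into $\cat{I}'$ and that its restriction $\cat{I}\to\cat{I}'$ corresponds, under these isomorphisms, to $\isoc{k}{\ell}{H}$. This takes care of the left-hand square of the displayed diagram.

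The key step is to show that $\isoc{k}{\ell}{G}$ carries $\partial_i(E,F)$ into $\partial_i(E',F')$, equivalently that $(\isoc{k}{\ell}{G})^{-1}(\cat{I}')\subseteq\cat{I}$. As $\cat{I}$ and $\cat{I}'$ are full subgroupoids, it is enough to compare objects. An object $(x,y,g)$ of $\Isoc{E}{F}{G}$ lies in $\cat{I}$ if and only if $g=i(h)$ for some $h\in H(\iota_E(x),\iota_F(y))$, whereas $\isoc{k}{\ell}{G}(x,y,g)=(k(x),\ell(y),g)$ lies in $\cat{I}'$ if and only if $g=i(h')$ for some $h'\in H(\iota_{E'}(k(x)),\iota_{F'}(\ell(y)))$; by the hypotheses $\iota_{E'}k=\iota_E$ and $\iota_{F'}\ell=\iota_F$ these two hom-sets are equal, so the two conditions coincide. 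Hence in fact $(\isoc{k}{\ell}{G})^{-1}(\cat{I}')=\cat{I}$, and $\isoc{k}{\ell}{G}$ sends the complementary full subgroupoid $\partial_i(E,F)$ into $\partial_i(E',F')$; its restriction there is the desired $\partial_i(k,\ell)$, the unique functor making the diagram commute because the horizontal identifications are the coproduct decompositions. Faithfulness of $\partial_i(k,\ell)$ is then immediate, as it is a restriction of $\isoc{k}{\ell}{G}$, which acts on a morphism $(e,f)$ by $(k(e),\ell(f))$ and so is faithful since $k$ and $\ell$ are. I do not anticipate a genuine obstacle; the only point requiring care — and the precise reason the hypotheses $\iota_E=\iota_{E'}k$ and $\iota_F=\iota_{F'}\ell$ cannot be dropped — is keeping track of which faithful functor into~$H$ is meant when forming the hom-sets above, in accordance with~\Cref{Conv:partial}.
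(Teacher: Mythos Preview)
Your proof is correct and follows the same strategy as the paper's: establish the commuting square $\Isoc{E'}{F'}{i}\circ\isoc{k}{\ell}{H}=\isoc{k}{\ell}{G}\circ\Isoc{E}{F}{i}$, then show the preimage of the ``$H$-part'' is the ``$H$-part'', and deduce diagonality and faithfulness. The only difference is that you invoke the repleteness remark to test membership in $\cat{I}'$ on the nose (via $g=i(h')$), whereas the paper checks it up to isomorphism and constructs $h=\iota_{F'}(f')^{-1}\,h'\,\iota_{E'}(e')$ explicitly; your version is a mild streamlining of the same argument.
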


\begin{proof}
The functor $\isoc k{\ell}G\colon \Isoc EFG\to \Isoc{E'}{F'}G$ simply maps objects $(x,y,g)$ to $(k(x),\ell(y),g)$. Similarly for $\isoc k{\ell}H\colon \Isoc EFH\to \Isoc{E'}{F'}H$. Hence the following square commutes (on the nose), as can be readily checked:
\[
\xymatrix@R=1.8em{
\isoc EFG \ar[d]_-{\isoc k{\ell}G}
& \ \isoc EFH \ar@{_(->}[l] \ar[d]^-{\isoc k{\ell}H}
\\
\isoc {E'}{F'}G
& \ \isoc {E'}{F'}H \ar@{_(->}[l]
}
\]
Here the horizontal inclusions are the fully-faithful functors of \Cref{Prop:partial-prep}. To show that $\isoc k{\ell}G$ also preserves the `complements', \ie maps $\partial_i(E,F)$ into~$\partial_i(E',F')$, it suffices to show that if the image under~$\isoc k{\ell}G$ of an object $(x,y,g)$ of $\isoc EFG$ is isomorphic to the image of an object of~$\isoc{E'}{F'}H$ inside~$\isoc{E'}{F'}G$ then the given object~$(x,y,g)$ is already equal to the image of an object of~$\isoc EFH$. So suppose that we have $(x',y',h')$ in~$\isoc{E'}{F'}H$ and an isomorphism $(e',f')\colon (k(x),\ell(y),g)\isoto (x',y',i(h'))$ in~$\isoc{E'}{F'}G$ given by~$e'\colon k(x)\isoto x'$ in~$E'$ and $f'\colon \ell(y)\isoto y'$ in~$F$, such that the following square commutes in~$G$:
\[
\xymatrix@R=1.8em{
i\iota_{E'}k(x) \ar[d]_-{i\iota_{E'}(e')} \ar[r]^-{g}
& i\iota_{F'}\ell(y) \ar[d]^-{i\iota_{F'}(f')}
\\
i\iota_{E'}(x') \ar[r]_-{i(h')}
& i\iota_{F'}(y').
}
\]
Let then $h:=\iota_{F'}(f')\inv\circ h'\circ \iota_{E'}(e')\colon \iota_E(x)=\iota_{E'}k(x)\isoto \iota_{F'}\ell(y)=\iota_F(y)$ in~$H$ and note that $i(h)=g$ by the above square. Then the object $(x,y,h)$ of~$\isoc EFH$ maps to our~$(x,y,g)$ in~$\isoc EFG$, as claimed. This proves that~$\isoc k{\ell}G$ is indeed `diagonal'.

Finally, $\partial_i(k,\ell)$ is faithful because so is~$\isoc k{\ell}G$, by faithfulness of~$k$ and~$\ell$.
\end{proof}

\begin{Not}
\label{Not:when-id}%
As usual we simply write $\partial_i(k,F)$ for~$\partial_i(k,\Id_F)\colon$ $\partial_i(E,F)\to \partial_i(E',F)$ and $\partial_i(E,\ell)$ for~$\partial_i(\Id_E,\ell)\colon \partial_i(E,F)\to \partial_i(E,F')$.
\end{Not}

\begin{Exa}
Applying the above in Example~\ref{Exas:partial-HD} we get faithful functors
\[ \partial_i(j,D)\colon \partial_i(D,D)\into \partial_i(H,D)
\quad \textrm{ and } \quad
\partial_i(H,j)\colon \partial_i(H,D)\into \partial_i(H,H) \,.\]
For subgroup inclusions $D\ointo{j} H\ointo{i} G$ as in \Cref{Rem:groupist-translation}, these simply correspond to inclusions $\overline{\mathfrak{X}}\subseteq \overline{\mathfrak{Y}}\subseteq \overline{\mathfrak{U}}$, where for every collection~$\mathfrak{S}$ of subgroups of~$G$ we denote by~$\overline{\mathfrak{S}}$ its closure under taking subgroups: $\overline{\mathfrak{S}}=\SET{K\le G}{\exists\, L\in\mathfrak{S}\textrm{ s.t.\ }K\le L}$.
\end{Exa}

The next statement encapsulates the situation needed in the Green equivalence.

\begin{Lem}
\label{Lem:geography}%
Let $j_1\colon D_1\into H$ and $j_2\colon D_2\into H$. Then we have Mackey squares
\begin{equation}
\label{eq:geography}%
\vcenter{
\xymatrix@C=8pt@R=14pt{
&&&& \Isoc{D_1}{D_2}H \sqcup \partial_i(D_1,D_2) \ar[ld]_(.65){\pr_1\sqcup \partial_i(D_1,j_2)\quad\;} \ar[rd]^(.65){\;\quad\pr_2\sqcup\partial_i(j_1,D_2)} \ar@{}[dd]|(.4){\isoEcell}
\\
&&& \kern-1.5em D_1\sqcup \partial_i(D_1,H) \kern-1.5em \ar[rd]_(.45){ j_1 \sqcup \partial_i(j_1,H) \quad} \ar[llld]_(.65){( \Id \;,\; \pr_1)\;} 
&& \kern-1.5em D_2\sqcup \partial_i(H,D_2) \kern-1.5em \ar[ld]^(.45){\quad j_2\sqcup \partial_i(H,j_2)} \ar[rrrd]^(.65){\; ( \Id \;,\; \pr_2)} 
\\
D_1 \ar[rrrd]_-{j_1}
&&&& \kern-1.5em H\sqcup \partial_i(H,H) \kern-1.5em \ar[ld]_(.6){( \Id \;,\; \pr_1)\;} \ar[rd]^(.6){\; ( \Id \;,\; \pr_2)} \ar@{}[dd]|{\isoTo}
&&&& D_2 \kern-1.5em \ar[llld]^-{j_2}
\\
&&& H \ar[rd]_-{i}
&& H \ar[ld]^-{i}
\\
&&&& G
}}\kern-1em
\end{equation}
where the top square is a coproduct of two Mackey squares. In particular, we have an equivalence $\Isoc{D_1}{\partial_i(H,D_2)}H\cong \partial_i(D_1,D_2)$.
\end{Lem}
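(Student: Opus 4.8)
The plan is to observe that the displayed diagram~\eqref{eq:geography} is really nothing but the single isocomma square $\Isoc{D_1}{D_2}{G}$---a Mackey square by construction---with each of its corners and legs broken up along the ``inside~$H$ versus outside~$H$'' decomposition $\Isoc EFG\cong\Isoc EFH\sqcup\partial_i(E,F)$ of~\eqref{eq:partial-def}; the content of the lemma is then that these various decompositions are mutually compatible and produce Mackey squares. I would first prove the ``in particular'' equivalence $\Isoc{D_1}{\partial_i(H,D_2)}{H}\cong\partial_i(D_1,D_2)$, which is the crux, and only afterwards read off the diagram.

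For the key equivalence, first note that the decomposition $\Isoc{H}{D_2}{G}\cong D_2\sqcup\partial_i(H,D_2)$ of \Cref{Exas:partial-HD} identifies the first projection $\pr_1\colon\Isoc{H}{D_2}{G}\to H$ with $j_2$ on the $D_2$-summand and with the chosen faithful embedding on $\partial_i(H,D_2)$ (\Cref{Conv:partial}). Form $\Isoc{D_1}{\partial_i(H,D_2)}{H}$ along that embedding: an object is a triple $\big(x,(a,b,g),h\big)$ with $x\in D_1$, with $(a,b,g)\in\partial_i(H,D_2)\subseteq\Isoc{H}{D_2}{G}$---so that $g\colon i(a)\isoto ij_2(b)$ is an isomorphism of $G$ \emph{not} lying in the image of $i\colon H\into G$---and with $h\colon j_1(x)\isoto a$ in $H$. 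Absorbing the comma-isomorphism $h$ (the elementary inspection underlying the pasting law for isocommas) identifies $\Isoc{D_1}{\partial_i(H,D_2)}{H}$, via $\big(x,(a,b,g),h\big)\mapsto\big(x,b,g\circ i(h)\big)$, with the full replete subgroupoid of $\Isoc{D_1}{D_2}{G}$ consisting of the triples $(x,b,g)$ whose comma-isomorphism $g\colon ij_1(x)\isoto ij_2(b)$ is not in the image of $i$. On the other hand, by \Cref{Prop:partial-prep} and the Remark following it, $\partial_i(D_1,D_2)$ is by definition the union of the connected components of $\Isoc{D_1}{D_2}{G}$ missing the (replete) image of $\Isoc{D_1}{D_2}{H}$, and such a component meets that image exactly when its comma-isomorphism lies in the image of $i$. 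Hence the two subgroupoids of $\Isoc{D_1}{D_2}{G}$ coincide, giving the desired equivalence; and reading off the projections through this identification shows it intertwines $\pr_1$ with $\pr_1$ and $\pr_2$ with $\pr_2\circ\pr_2$, compatibly with the comma-2-cells---exactly the compatibilities that occur in~\eqref{eq:geography}.

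With the key equivalence in hand I would assemble the diagram. Writing $T,L_1,R_1,M$ for its four upper groupoids: the bottom square is literally the isocomma square $\Isoc HHG$, hence Mackey, with decomposition $M\cong H\sqcup\partial_i(H,H)$ given again by \Cref{Exas:partial-HD}. Each lateral square strictly commutes---a one-line check on each $\sqcup$-summand, using the explicit formulas for $\partial_i(j_1,H)$ and $\partial_i(H,j_2)$ from \Cref{Prop:diag}---and is a Mackey square because its comparison functor $L_1=D_1\sqcup\partial_i(D_1,H)\to\Isoc{D_1}{(H\sqcup\partial_i(H,H))}{H}$ (and its mirror image) is an equivalence: decompose the target by \Cref{Lem:MS-sum} and apply $\Isoc{D_1}{H}{H}\cong D_1$ (\Cref{Exa:isoc-triv}) together with the case $D_2=H$ of the key equivalence, namely $\Isoc{D_1}{\partial_i(H,H)}{H}\cong\partial_i(D_1,H)$. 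For the top square, \Cref{Lem:MS-sum} applied to the decompositions of $L_1$, $R_1$ and $M$ turns the comparison $T\to\Isoc{L_1}{R_1}{M}$ into a coproduct of four isocommas, of which the two mixed ones vanish (the $H$-part and the $\partial_i(H,H)$-part of $M$ lie in distinct connected components) while the other two are the isocomma square $\Isoc{D_1}{D_2}{H}$---trivially Mackey---and $\Isoc{\partial_i(D_1,H)}{\partial_i(H,D_2)}{\partial_i(H,H)}$, which the same absorption argument as above identifies with $\partial_i(D_1,D_2)$. Thus the top square is the coproduct of these two Mackey squares. (Alternatively, since the whole composite square of~\eqref{eq:geography} is the isocomma $\Isoc{D_1}{D_2}{G}$, hence Mackey, the top square is Mackey by the 2-out-of-3 property~\Cref{Lem:MS-2-out-of-3}.) Throughout, \Cref{Prop:diag} is what guarantees that the $\sqcup$-decompositions in sight are compatible with the projection legs, so that the pasting in~\eqref{eq:geography} is forced.

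The hard part will be the bookkeeping rather than any single mathematical point: at every step one must verify that the equivalence in play respects not just objects but also the two projection legs and the comma-2-cell, so that the words ``Mackey square'' are warranted; and one must stay disciplined about \Cref{Conv:partial}, since $\partial_i(H,D_2)$ is embedded into $H$ via~$\pr_1$ and it is precisely that embedding which makes the absorption of the comma-isomorphism $h$ above go through. \Cref{Prop:diag} is the device that keeps all the coproduct decompositions coherent and thereby does most of the organizational work.
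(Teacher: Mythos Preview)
Your proposal is correct, and the argument goes through as described. Your route, however, is essentially the reverse of the paper's. The paper first builds the \emph{undecomposed} diamond of iterated isocommas (with corners $\Isoc{D_1}{D_2}{G}$, $\Isoc{D_1}{H}{G}$, $\Isoc{H}{D_2}{G}$, $\Isoc{H}{H}{G}$), observes that each inner square is Mackey by repeated applications of the 2-out-of-3 property (\Cref{Lem:MS-2-out-of-3}), then unpacks all four $\partial$-decompositions via diagonality (\Cref{Prop:diag}) to obtain~\eqref{eq:geography}; only \emph{afterwards} does it read off the key equivalence $\Isoc{D_1}{\partial_i(H,D_2)}{H}\cong\partial_i(D_1,D_2)$ by pasting the two upper-left Mackey squares and checking that the resulting equivalence is diagonal on the $\sqcup$-summands. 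You instead establish the key equivalence first, by an explicit absorption-of-comma-isomorphism computation, and then use it (and its $D_2=H$ specialization) to verify each square of~\eqref{eq:geography} is Mackey by hand. Your approach is more elementary and makes the equivalence concrete, at the cost of more object-level bookkeeping; the paper's approach is slicker, leaning on \Cref{Lem:MS-2-out-of-3} to avoid ever touching elements, but requires the extra diagonality check at the end. You correctly note the 2-out-of-3 shortcut as an alternative for the top square; that is in fact the paper's primary mechanism throughout.
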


\begin{proof}
Taking the isocommas of $D_1\into H\into G$ against $D_2\into H\into G$  in~$G$, we obtain (using \Cref{Not:strict-fun-isocommas}) the following diagram, where each square is a Mackey square by repeated applications of \Cref{Lem:MS-2-out-of-3}:
\[
\xymatrix@C=14pt@R=14pt{
&& \isoc {D_1}{D_2}G \ar@/_2.5em/[lldd]_-{\pr_1\ } \ar[rd]^(.6){\isoc {j_1}{D_2}G} \ar[ld]_(.6){\isoc {D_1}{j_2}G} \ar@/^2.5em/[rrdd]^-{\pr_2} 
\\
& \isoc{D_1}HG \ar[ld]^-{\pr_1\ } \ar[rd]^-{\isoc {j_1}HG}
&& \isoc H{D_2}G \ar[ld]_-{\isoc H{j_2}G} \ar[rd]_-{\pr_2} 
\\
\kern1em D_1 \kern1em \ar[rd]_-{j_1}
&& \isoc HHG \ar[ld]_-{\pr_1} \ar[rd]^-{\pr_2} \ar@{}[dd]|{\isoTo}
&& \kern1em D_2 \kern1em \ar[ld]^-{j_2}
\\
& H \ar[rd]_-{i}
&& H \ar[ld]^-{i}
\\
&& G
}
\]
By unpacking the decompositions~\eqref{eq:partial-def} for all four isocommas and using diagonality as in \Cref{Prop:diag}, we obtain the diagram of Mackey squares in~\eqref{eq:geography}.
Patching together the two upper-left Mackey squares, we see that the top groupoid $\Isoc {D_1}{D_2}H\sqcup \partial_i(D_1,D_2)$ is equivalent to the isocomma $\Isoc{D_1}{(D_2\sqcup \partial_i(H,D_2))}H\cong\Isoc {D_1}{D_2}H\sqcup\Isoc{D_1}{\partial_i(H,D_2)}H$, where the last isomorphism is by \Cref{Lem:MS-sum}.

We still want to verify that this equivalence
\begin{equation} \label{eq:comp-equiv}
\Isoc {D_1}{D_2}H\sqcup \partial_i(D_1,D_2) \cong \Isoc {D_1}{D_2}H\sqcup\Isoc{D_1}{\partial_i(H,D_2)}H
\end{equation}
is diagonal, so as to deduce the claimed equivalence $\partial_i(D_1,D_2)\cong\Isoc{D_1}{\partial_i(H,D_2)}H$ between the right summands.
These being finite groupoids, it suffices to show that the equivalence~\eqref{eq:comp-equiv} restricts to an equivalence on the \emph{left} summands. Indeed, by construction, \eqref{eq:comp-equiv} makes the following diagram commute
\[
\xymatrix@C=0pt{
\Isoc{D_1}{D_2}{H} \sqcup \partial_i(D_1,D_2)
 \ar[rr]_-{\simeq}^-{\textrm{\eqref{eq:comp-equiv}}}
  \ar[dr]^\simeq_{\langle (\pr_1,\pr_1) , \ldots \rangle \quad} &&
\Isoc {D_1}{D_2}H\sqcup\Isoc{D_1}{\partial_i(H,D_2)}H
 \ar[dl]_\simeq^{\quad \langle (\pr_1,\pr_1) , \ldots \rangle} \\
& \Isoc{D_1}{(D_2\sqcup \partial_i(H,D_2))}{H} &
}
\]
where the left equivalence is the canonical comparison of Mackey squares (whose first component $(\pr_1,\pr_1)$ is computed by the top-left composite in \eqref{eq:geography}) and the right equivalence is as in (the proof of) \Cref{Lem:MS-sum}. We deduce from this triangle that \eqref{eq:comp-equiv} restricts to the identity of $\isoc{D_1}{D_2}{H}$.
\end{proof}

We shall need one more, slightly tricky, observation.
\begin{Lem}
\label{Lem:tricky}%
Let $D\ointo{j} H\ointo{i} G$. Then the functor $\pr_1\colon\Isoc H{\partial_i(D,D)}G\into H$ factors via $\pr_1\colon \partial_i(H,D)\into H$.
\end{Lem}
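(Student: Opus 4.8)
The plan is to unwind both isocommas explicitly and exhibit a functor realizing the claimed factorization, then check it is faithful. Recall that $\partial_i(D,D)$ lives inside $\Isoc DDG$ (\Cref{Not:partial}), and by \Cref{Conv:partial} it is embedded into $H$ via $\pr_1\colon\partial_i(D,D)\into\Isoc DDG\xrightarrow{\pr_1}D\xhookrightarrow{j}H$. An object of $\partial_i(D,D)$ is thus a triple $(x,x',g)$ with $x,x'\in\Obj(D)$ and $g\colon j(x)\isoto j(x')$ an isomorphism in $G$ which is \emph{not} in the image of $H$ (up to the repleteness of \Cref{Rem:partial-proj}), and it is sent to $x\in D$, then to $j(x)\in H$. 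Now an object of $\Isoc H{\partial_i(D,D)}G$ (formed against $i\colon H\into G$ on the left and $i\circ\pr_1\colon\partial_i(D,D)\into H\into G$ on the right) is a tuple $\big(z,(x,x',g),a\big)$ with $z\in\Obj(H)$ and $a\colon i(z)\isoto i(j(x))$ in $G$, and $\pr_1$ sends it to $z\in H$.

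Next I would observe that such a tuple canonically produces an object of $\partial_i(H,D)\subseteq\Isoc HDG$: indeed $\partial_i(H,D)$ consists of triples $(z,x,b)$ with $z\in\Obj(H)$, $x\in\Obj(D)$ and $b\colon i(z)\isoto i(j(x))$ in $G$ lying in the ``complementary'' components, \ie not of the form $i(\textrm{iso in }H)$ — see \Cref{Exas:partial-HD}\eqref{Exa:partial-HD}. Given $\big(z,(x,x',g),a\big)$ I would send it to $(z,x,a)$. The map $a$ does lie in the complementary components: if $a$ came from an isomorphism $z\isoto j(x)$ in $H$, then composing with $g$ (which by hypothesis is \emph{not} realized in $H$) would not help — one needs to argue carefully here that $a$ being ``internal to $H$'' would force the component of $(x,x',g)$ in $\Isoc DDG$ to meet $\Isoc DDH$ via the Mackey-square structure, contradicting $(x,x',g)\in\partial_i(D,D)$. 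This is precisely where \Cref{Lem:geography} should be invoked: the equivalence $\Isoc{D_1}{\partial_i(H,D_2)}H\cong\partial_i(D_1,D_2)$ (with $D_1=D$, $D_2=D$) identifies objects of $\partial_i(D,D)$ with triples involving an object of $\partial_i(H,D)$ together with an isomorphism living in $H$; feeding the outer $H$-datum $(z,a)$ through gives exactly an object of $\partial_i(H,D)$ and the needed compatibility on morphisms.

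Concretely, then, I would define the comparison functor $\Phi\colon\Isoc H{\partial_i(D,D)}G\to\partial_i(H,D)$ on morphisms by dropping the $D$-component of the pair (the one relating $x$ to $x'$), keeping only the $H$-component between the $z$'s, and check functoriality — this is routine since composition in isocommas is componentwise. The required factorization is the commuting triangle
\[
\xymatrix@C=14pt@R=14pt{
\Isoc H{\partial_i(D,D)}G \ar[rr]^-{\Phi} \ar[rd]_-{\pr_1} && \partial_i(H,D) \ar[ld]^-{\pr_1}
\\
& H &
}
\]
which holds on the nose because both legs send a tuple to its $z$-component. Faithfulness of $\pr_1\colon\Isoc H{\partial_i(D,D)}G\into H$ follows from faithfulness of $\partial_i(D,D)\into H$ combined with \Cref{Prop:partial-prep} (the isocomma against a faithful morphism is faithful over the base), so the factorization is through a faithful functor as the statement's ``$\into$'' decoration demands; alternatively, one gets faithfulness of $\pr_1$ on $\Isoc H{\partial_i(D,D)}G$ directly from \Cref{Prop:diag} applied to $\partial_i(H,\partial_i(D,j))$ or simply from the explicit description above.

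The main obstacle I anticipate is the bookkeeping in the second step: verifying that the $G$-isomorphism $a\colon i(z)\isoto i(j(x))$ genuinely lands in $\partial_i(H,D)$ rather than in the $\Isoc HDH$-part of $\Isoc HDG$. The subtlety is that ``$(x,x',g)$ avoids $\Isoc DDH$'' is a statement about the $D$-to-$D$ isomorphism $g$, while ``$(z,x,a)$ avoids $\Isoc HDH$'' is a statement about $a$, and these are \emph{a priori} unrelated; the bridge is exactly the Mackey-square decomposition in \Cref{Lem:geography}, which one must set up with the correct choice of embeddings (\Cref{Conv:partial}) so that the ``outer'' datum attached to an object of $\partial_i(D,D)$ \emph{is} an object of $\partial_i(H,D)$. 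Once that identification is in place the rest is formal.
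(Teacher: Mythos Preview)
Your single-formula approach $\Phi\big(z,(x,x',g),a\big)=(z,x,a)$ has a genuine gap that \Cref{Lem:geography} does not repair. The issue is exactly the one you flag as the ``main obstacle'' and then talk yourself out of: the $G$-isomorphism $a\colon i(z)\isoto ij(x)$ can perfectly well lie in the image of~$H$, and when it does, $(z,x,a)$ lands in the $\Isoc HDH$-component of $\Isoc HDG$, not in~$\partial_i(H,D)$. For a concrete witness, take any $(x,x',g)\in\partial_i(D,D)$ and form the object $\big(j(x),(x,x',g),\id_{ij(x)}\big)$ of $\Isoc H{\partial_i(D,D)}G$; your $\Phi$ sends it to $(j(x),x,\id)$, which is in the image of $\Isoc HDH$. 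The contradiction you sketch (``$a$ internal to~$H$ would force $(x,x',g)\in\Isoc DDH$'') is simply false: the datum~$a$ is independent of~$g$, and the equivalence $\partial_i(D,D)\cong\Isoc{D}{\partial_i(H,D)}H$ of \Cref{Lem:geography} unpacks $(x,x',g)$ in terms of an object of $\partial_i(H,D)$ \emph{together with an $H$-isomorphism from~$j(x)$} --- it says nothing about the external pair $(z,a)$.

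The paper's proof handles this by first decomposing $\Isoc H{\partial_i(D,D)}G\cong\partial_i(D,D)\sqcup\partial_i(H,\partial_i(D,D))$ as in~\eqref{eq:partial-def} and then using \emph{two different} formulas. On the second summand $\partial_i(H,\partial_i(D,D))$, where $a$ is by construction not realized in~$H$, your map $(z,(x,x',g),a)\mapsto(z,x,a)$ does work: it is exactly the functor $\partial_i(H,\pr_1)$ of \Cref{Prop:diag}, and this is where \Cref{Conv:partial} is needed so that $\pr_1\colon\partial_i(D,D)\to D\ointo{j}H$ is the tacit embedding. On the first summand $\partial_i(D,D)$, however, one must instead use $\partial_i(j,D)\colon(x,x',g)\mapsto(j(x),x',g)$, which lands in $\partial_i(H,D)$ because the governing $G$-isomorphism is now~$g$ rather than~$a$. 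No uniform formula works (the alternative $(z,x',g\cdot a)$ fails on the second summand for symmetric reasons), so the case split is essential.
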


\begin{proof}
We need to construct $u\colon \Isoc H{\partial_i(D,D)}G\to \partial_i(H,D)$ such that $\pr_1\circ u=\pr_1$. As in~\eqref{eq:partial-def}, there is a decomposition of the isocomma $\Isoc H{\partial_i(D,D)}G$ over~$G$ as the disjoint union of the `same' isocomma over~$H$, that is $\Isoc H{\partial_i(D,D)}H\cong\partial_i(D,D)$, with the rest, that is called $\partial_i(H,\partial_i(D,D))$ by definition:
\begin{equation}
\label{eq:aux-decomp-HDDG}%
\Isoc H{\partial_i(D,D)}G \cong \partial_i(D,D)\sqcup\partial_i(H,\partial_i(D,D)).
\end{equation}
The morphism~$u$ is given by two different formulas on those two components.

On the first component, $\partial_i(D,D)$, define $u:=\partial_i(j,D)\colon \partial_i(D,D)\to \partial_i(H,D)$ and note that $\pr_1\circ u=j\pr_1$. The latter is also the restriction of $\pr_1\colon \Isoc H{\partial_i(D,D)}G\to H$ to that component~$\partial_i(D,D)$, as the following diagram is easily seen to commute
\[
\xymatrix@C=6em{
\partial_i(D,D) \ar[r]^-{\langle j\pr_1,\Id,\id_{j\pr_1}\rangle}_-{\simeq} \ar[d]_-{\pr_1}
& \isoc H{\partial_i(D,D)}H \ \ar@{^(->}[r]^-{\isoc H{\partial_i(D,D)}i}_-{\textrm{as in Prop.\,\ref{Prop:partial-prep}}}
& \isoc H{\partial_i(D,D)}G \ar[d]^-{\pr_1}
\\
D \ar[rr]^-{j}
&& H
}
\]
(The top fully-faithful functor is the explicit way $\partial_i(D,D)$ is equivalent to the component $\isoc H{\partial_i(D,D)}H$ of $\isoc H{\partial_i(D,D)}G$.)

On the other component $\partial_i(H,\partial_i(D,D))$ in~\eqref{eq:aux-decomp-HDDG}, we can use the functoriality of~$\partial_i(H,-)$ to define $u:=\partial_i(H,\pr_1)\colon \partial_i(H,\partial_i(D,D))\to \partial_i(H,D)$ for the morphism $\pr_1\colon \partial_i(D,D)\to D$. Note that we are allowed to do this, \ie to apply \Cref{Prop:diag} with $F\into F'\into H$ being $\partial_i(D,D)\ointo{\pr_1} D\ointo{j}H$ as this composite is the (tacit) morphism $\partial_i(D,D)\into H$; see \Cref{Conv:partial}. (This would be inaccurate with $\pr_2$!)
By the construction of $\partial_i(H,\pr_1)$ as a diagonal component of $\isoc{H}{\pr_1}{i}$, we have $\pr_1\circ \partial_i(H,\pr_1)=\pr_1$ as wanted.
\end{proof}

%
\section{The Green equivalence}
\label{sec:Green}%

We fix for the section a Mackey 2-functor $\MM\colon \GG^\op\to \ADD$ (\Cref{Rec:2-Mackey}). Recall from \Cref{Not:groupoids} that $\GG$ typically denotes either the 2-category of finite groupoids~$\GG=\groupoidf$ with faithful morphisms or the 2-category $\GG=\gpdGzero$ of groupoids embedded in a given ambient groupoid~$G_0$.

\begin{Def}
\label{Def:D-object}%
Let $j\colon D\into H$ be faithful in~$\GG$. We call an object~$m\in \MM(H)$ a \emph{$D$-object} (or \emph{$j$-object}), if $m$ is a retract of some object of the form $j_*(n)$ with $n\in\MM(D)$. We denote the full subcategory of $D$-objects in~$\cat M(H)$ by
\[
\MM(H;D)=\MM(H;j) := \add (j_*(\cat M(D))) \subseteq \cat M(H)
\]
An object of the form $j_*(n)$ for some $n\in \MM(D)$ will be called a \emph{strict} $D$-object.
\end{Def}

\begin{Rem}
Representation theorists call our $D$-objects \emph{relatively $D$-projective} (for having the left lifting property against $j^*$-split morphisms). We find this terminology cumbersome and not so helpful in our broader context. In view of the ambidextrous adjunction $j_*\adj j^* \adj j_*$, our $D$-objects are also the relatively $D$-\emph{injective} ones (with dual lifting property), or equivalently those~$m$ for which the unit $m\to j_*j^*(m)$ (or the counit $j_*j^*(m)\to m$) has a retraction (resp.\ a section).
\end{Rem}

\begin{Rem}
\label{Rem:partial-trivial}%
Let us get a few elementary observations out of the way:
\begin{enumerate}[(1)]
\item
\label{it:MHD-iso}%
The subcategory $\MM(H;j)$ is independent of the isomorphism class of~$j$, that is, if $j\isoEcell j'\colon D\into H$ then $\MM(H;j)=\MM(H;j')$.
\smallbreak
\item
\label{it:MHE<MHD}%
If $E\into D\into H$ then $\MM(H;E)\subseteq\MM(H;D)$.
\end{enumerate}
\end{Rem}

Next we study the behavior of $D$-objects under induction and restriction.
\begin{Prop}
\label{Prop:D-obj-i_*}%
Let $D\ointo{j} H\ointo{i} G$. Then induction $i_*\colon \MM(H)\to \MM(G)$ preserves $D$-objects: $i_*(\MM(H;j))\subseteq \MM(G;ij)$, that is, $i_*(\MM(H;D))\subseteq \MM(G;D)$.
\end{Prop}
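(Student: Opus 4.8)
The statement is that $i_*$ preserves $D$-objects, i.e.\ $i_*(\MM(H;D))\subseteq\MM(G;D)$, where by definition $\MM(H;D)=\add(j_*\MM(D))$ and $\MM(G;D)=\add((ij)_*\MM(D))$. Since $i_*$ is an additive functor, it preserves retracts and finite direct sums, so it suffices to check that $i_*$ sends a \emph{strict} $D$-object $j_*(n)$ (for $n\in\MM(D)$) into $\MM(G;D)$. First I would recall that composition of restriction functors is strict, $\MM(ij)=\MM(j)\MM(i)$, and hence by uniqueness of adjoints (and using \Cref{Rem:can-Theta} to view $i_!\simeq i_*$ as a single two-sided adjoint) we get a canonical isomorphism $i_*\circ j_*\cong (ij)_*$. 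Applying this to $n\in\MM(D)$ gives
\[
i_*(j_*(n))\;\cong\;(ij)_*(n),
\]
which is by definition a strict $(ij)$-object, hence lies in $\MM(G;D)=\MM(G;ij)$.

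So the only real content is the compatibility of induction functors under composition, $i_*\circ j_*\cong (ij)_*$. I would obtain this as the mate of the evident strict equality $j^*\circ i^* = (ij)^*$: taking left adjoints gives $(ij)_!\cong i_!\circ j_!$, taking right adjoints gives $(ij)_*\cong i_*\circ j_*$, and these agree with the combined functor of \Cref{Rem:can-Theta} because the comparison $\Theta$ is compatible with composition (this is part of the structure of a Mackey 2-functor recorded in \cite[Theorem~1.2.1]{BalmerDellAmbrogio18pp}). Strictly speaking one should be a touch careful that all the relevant morphisms $j\colon D\into H$, $i\colon H\into G$ and $ij\colon D\into G$ are faithful, so that (Mack~2) applies and the adjoints $j_*$, $i_*$, $(ij)_*$ all exist; this is immediate since a composite of faithful morphisms is faithful, and in the $\gpdGzero$ setting the composite embedding $D\into H\into G\into G_0$ is still faithful.

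I do not expect any genuine obstacle here: the whole proposition is a one-line consequence of additivity of $i_*$ together with the standard functoriality $i_*j_*\cong(ij)_*$ of the ambidextrous adjoints, both of which are already built into the definition of a Mackey 2-functor. The mildly delicate point — if one wants to be scrupulous — is bookkeeping the identification of $i_!\simeq i_*$ throughout (i.e.\ that the isomorphism $i_*j_*\cong(ij)_*$ one gets from the right adjoints matches the one from the left adjoints under $\Theta$), but this is exactly what \cite[\S\,1.2 and Theorem~1.2.1]{BalmerDellAmbrogio18pp} guarantees, so it can simply be cited. Accordingly the write-up should be just a few lines.
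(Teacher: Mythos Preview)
Your proposal is correct and matches the paper's approach: the paper's entire proof is the single sentence ``Direct from the definition,'' and your argument is precisely the unpacking of that sentence (reduce to strict $D$-objects by additivity, then use $i_*j_*\cong (ij)_*$ from uniqueness of adjoints). Your extra care about $\Theta$-compatibility is harmless but more than the authors deemed necessary.
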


\begin{proof}
Direct from the definition.
\end{proof}

\begin{Prop}
\label{Prop:D-obj-i^*}%
Let $E\ointo{k} G$ and $H\ointo{i} G$. Then restriction $i^*\colon \MM(G)\to \MM(H)$ maps $E$-objects to $\isoc HEG$-objects, where $\isoc HEG$ embeds into~$H$ via the first projection $\pr_1\colon \Isoc HEG\into H$. Namely, if we have the left-hand isocomma
\[
\vcenter{\xymatrix@C=14pt@R=14pt{
& \isoc HEG \ar[dl]_-{\pr_1} \ar[dr]^-{\pr_2}
\\
H \ar[dr]_-{i} \ar@{}[rr]|-{\isocell{\gamma}}
&& E \ar[dl]^-{k}
\\
&G
}}
\qquadtext{then}
i^*(\MM(G;E))\subseteq\MM(H;\isoc{H}{E}{G})
\]
or more precisely $i^*(\MM(G;k))\subseteq\MM(H;\pr_1)$.
\end{Prop}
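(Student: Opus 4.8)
**Proof proposal for Proposition \ref{Prop:D-obj-i^*}.**

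The plan is to reduce the statement to the Base-Change isomorphism (Mack~3). First I would observe that, since $i^*$ is additive and the formation of $\add$ commutes with additive functors, it suffices to check that $i^*$ sends a \emph{strict} $E$-object $k_*(n)$, for $n \in \MM(E)$, into $\MM(H;\pr_1)$; indeed $\MM(G;k) = \add(k_*(\MM(E)))$ and $i^*(\add(\cat S)) \subseteq \add(i^*(\cat S))$ for any collection $\cat S$, so the general case follows by taking retracts and finite sums. Thus the whole content is to identify $i^* k_* (n)$ up to isomorphism.

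The key step is then to apply (Mack~3) to the isocomma square displayed in the statement, which is in particular a Mackey square (every isocomma square is, by \Cref{Rec:isocomma}). Using the ambidextrous form of induction/coinduction (\Cref{Rem:can-Theta}), the Base-Change isomorphism reads
\[
i^* \circ k_* \;\isoTo\; (\pr_1)_* \circ (\pr_2)^*,
\]
where $\pr_1 \colon \isoc HEG \into H$ and $\pr_2 \colon \isoc HEG \to E$ are the two projections. (One should be slightly careful: the original (Mack~3) is stated for the two mates $j_! v^* \isoTo u^* i_!$ and $u^* i_* \isoTo j_* v^*$; here I use the second one with the roles $u = i$, $i = k$, $v = \pr_1$, $j = \pr_2$, and then invoke ambidexterity to write both sides with the single functor $(-)_*$. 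This is exactly the combination recorded in \cite[Theorem~1.2.1]{BalmerDellAmbrogio18pp}.) Consequently $i^* k_*(n) \simeq (\pr_1)_*\big((\pr_2)^*(n)\big)$, which by definition is a strict $\pr_1$-object in $\MM(H)$, hence lies in $\MM(H;\pr_1)$.

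Finally, running this over all $n \in \MM(E)$ and closing up under finite sums and retracts gives $i^*(\MM(G;k)) \subseteq \MM(H;\pr_1)$, and by \Cref{Rem:partial-trivial}\eqref{it:MHD-iso} the subcategory $\MM(H;\pr_1)$ depends only on the isomorphism class of $\pr_1 \colon \isoc HEG \into H$, so the abbreviated statement $i^*(\MM(G;E)) \subseteq \MM(H;\isoc HEG)$ is justified. I expect no serious obstacle here; the only point requiring care is the bookkeeping of mates and the invocation of ambidexterity to make both legs of the Base-Change isomorphism involve $(-)_*$ rather than a mismatched $(-)_!$ and $(-)_*$. Note also that faithfulness of $\pr_1$ (needed for $\MM(H;\pr_1)$ to be defined within our framework $\GG = \groupoidf$ or $\gpdGzero$) holds because $\pr_1 \colon \isoc HEG \to H$ is faithful whenever $k \colon E \to G$ is, which is part of our standing hypothesis that all morphisms in sight are faithful.
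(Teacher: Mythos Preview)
Your proposal is correct and is exactly the paper's approach: the paper's own proof is the single sentence ``This follows immediately from the Mackey formula: $i^*k_*\cong (\pr_1)_*\pr_2^*$,'' which is precisely the Base-Change isomorphism you invoke, applied to a strict $E$-object. Your additional bookkeeping (reduction to strict objects, care with mates and ambidexterity, faithfulness of $\pr_1$) is all sound and simply spells out what the paper leaves implicit.
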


\begin{proof}
This follows immediately from the Mackey formula: $i^*k_*\cong (\pr_1)_*\pr_2^*$.
\end{proof}

\begin{Rem}
We are going to consider categories $\MM(G;E)$ and $\MM(H;E)$, with respect to isocommas~$E=\Isoc HDG$ for $D\into G$, or with respect to subgroupoids of such isocommas like $E=\partial_i(H,D)$ or $E=\partial_i(D,D)$ as in \Cref{sec:partial}. When the chosen faithful morphism $E\into H$ is not specified, we always use the first projection as in~\Cref{Conv:partial}. This is in line with \Cref{Prop:D-obj-i^*}.
\end{Rem}

The following will be a useful tool in the Green equivalence.
\begin{Lem}
\label{Lem:factor}%
Let $j\colon D\into H$ and $k\colon E\into H$. Let $\alpha\colon x\to y$ be a morphism in~$\MM(H)$, where $x$ is a $D$-object. Suppose that $\alpha$ factors via an $E$-object, then $\alpha$ factors via a $\isoc DEH$-object.
\end{Lem}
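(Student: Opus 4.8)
\emph{Strategy.} The plan is to reduce the claim to a morphism between \emph{strict} objects and then read the conclusion off the Mackey formula for the isocomma $\isoc DEH$.

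\emph{Reduction.} As $x$ is a $D$-object it is a retract of a strict one: fix $x\xrightarrow{\sigma}j_*(n)\xrightarrow{\rho}x$ with $\rho\sigma=\id_x$ and $n\in\MM(D)$. Writing the given factorization as $\alpha=\beta\gamma$ with $\gamma\colon x\to w$, $\beta\colon w\to y$ and $w$ an $E$-object, present $w$ likewise as a retract $w\xrightarrow{s}k_*(p)\xrightarrow{r}w$ with $rs=\id_w$ and $p\in\MM(E)$. Then $\alpha=(\beta r)\circ\phi\circ\sigma$ where $\phi:=s\gamma\rho\colon j_*(n)\to k_*(p)$, so it is enough to show that $\phi$, a morphism between strict objects, factors via a $\isoc DEH$-object.

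\emph{Factoring $\phi$.} Since $k^*$ admits the right adjoint $k_*$ (Mack~2), the triangle identities give $\phi=k_*(\hat\phi)\circ\eta_{j_*(n)}$, with $\eta\colon\Id\Rightarrow k_*k^*$ the unit and $\hat\phi\colon k^*j_*(n)\to p$ the adjunct of $\phi$; so $\phi$ factors through $k_*k^*j_*(n)$. It remains to recognise this object. The base-change isomorphism (Mack~3) for the Mackey square $\isoc DEH$, with projections $\pr_1\colon\isoc DEH\to D$ and $\pr_2\colon\isoc DEH\to E$, reads $k^*j_*\cong(\pr_2)_*(\pr_1)^*$; applying $k_*$ and using that $\MM$ is a 2-functor whose restrictions have adjoints, we get $k_*k^*j_*(n)\cong(k\pr_2)_*(\pr_1)^*(n)$, which is in particular an object of $\MM(H;k\pr_2)$. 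Since the structural 2-cell of the isocomma is an isomorphism $j\pr_1\isoEcell k\pr_2$, \Cref{Rem:partial-trivial}\,\eqref{it:MHD-iso} yields $\MM(H;k\pr_2)=\MM(H;j\pr_1)=\MM(H;\isoc DEH)$, where the last category is the one fixed by \Cref{Conv:partial}. Hence $\phi$, and therefore $\alpha$, factors via a $\isoc DEH$-object.

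\emph{The delicate point.} Nothing is deep here; the one step requiring care is the final identification, namely ensuring that the object produced by the Mackey formula is a $\isoc DEH$-object \emph{for the embedding fixed by \Cref{Conv:partial}} and not, say, a $\isoc EDH$-object. Running the base-change isomorphism against the square $\isoc DEH$ (rather than the opposite isocomma) lands on the correct groupoid, and the structural 2-cell of the isocomma together with \Cref{Rem:partial-trivial}\,\eqref{it:MHD-iso} absorbs the difference between the two faithful projections of $\isoc DEH$ into $H$.
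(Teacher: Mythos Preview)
Your proof is correct and follows essentially the same approach as the paper: reduce to a morphism between strict objects, use an adjunction to factor it through an object of the form $(\ldots)_*(\ldots)^*$ applied to something, and identify that object as a $\isoc DEH$-object via the Mackey formula. The only cosmetic difference is that the paper uses the counit of $j_*\dashv j^*$ to factor through $j_*j^*k_*(z)\cong (j\pr_1)_*\pr_2^*(z)$, landing directly on the embedding $j\pr_1$ of \Cref{Conv:partial}, whereas you use the unit of $k^*\dashv k_*$ to factor through $k_*k^*j_*(n)\cong (k\pr_2)_*\pr_1^*(n)$ and then invoke the structural 2-cell and \Cref{Rem:partial-trivial}\,\eqref{it:MHD-iso} to pass to~$j\pr_1$; either choice works.
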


\begin{proof}
One easily retracts to the case where $x=j_*(w)$ is a strict $D$-object, for $w\in \MM(D)$, and where $\alpha$ factors via some strict $E$-object $k_*(z)$, with $z\in \MM(E)$. Let us write the assumed factorization, say, $\alpha=\gamma\beta$, in the top triangle below:
\[
\xymatrix@R=2em{
j_*(w) \ar[rr]^-{\alpha} \ar[rd]^-{\beta} \ar[d]_-{j_*(\tilde \beta)}
&& y
\\
j_*j^*k_*(z) \ar[r]_-{\eps}
& k_*(z) \ar[ru]^-{\gamma}
}
\]
Using the adjunction $\MM(H)(j_*w,k_*z)\simeq \MM(D)(w,j^*k_*z)$, there exists a morphism $\tilde\beta\colon w\to j^*k_* z$ making the above left-hand triangle commute, where $\eps$ is the counit of the $j_*\adj j^*$ adjunction. By Mackey for the isocomma~$\isoc DEH$, this new object $j_*j^*k_*(z)\simeq j_*{\pr_1}_*\pr_2^*(z)= (j \pr_1)_*\pr_2^*(z)$ is induced from $\pr_2^*(z)\in \MM(\isoc DEH)$ along $j \pr_1\colon \Isoc DEH\into H$, hence is a $\isoc DEH$-object, through which $\alpha$ factors.
\end{proof}

In particular, the $D$'s for which a fixed $x\in \MM(H)$ is a $D$-object are `filtering':

\begin{Cor} \label{Cor:filtered}
If $x\in \MM(H)$ is both a $D$-object and an $E$-object, then it is also a $\isoc{D}{E}{H}$-object (for either embedding of $\isoc{D}{E}{H}$ into~$H$).
\end{Cor}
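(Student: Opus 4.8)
The statement is an immediate consequence of \Cref{Lem:factor}, so the plan is short. First I would apply that lemma to the identity morphism $\id_x\colon x\to x$. Since $x$ is a $D$-object and $\id_x$ factors (trivially, as $\id_x\circ\id_x$) through $x$ itself, which is an $E$-object by hypothesis, \Cref{Lem:factor} yields a factorization $\id_x=\gamma\circ\beta$ through some $\isoc DEH$-object~$z$, where $\Isoc DEH$ is embedded in~$H$ via $j\pr_1$ (in the notation $j\colon D\into H$, $k\colon E\into H$ of that lemma). But $\gamma\circ\beta=\id_x$ says exactly that $x$ is a retract of~$z$; since the $\isoc DEH$-objects form a subcategory closed under retracts (\Cref{Def:D-object}, \Cref{Not:add}), it follows that $x$ is itself a $\isoc DEH$-object for the embedding $j\pr_1\colon\Isoc DEH\into H$.

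For the parenthetical clause ``for either embedding'', I would simply rerun the same argument with the roles of $D$ and~$E$ interchanged: viewing $x$ as an $E$-object whose identity factors through the $D$-object~$x$, \Cref{Lem:factor} now gives that $x$ is a $\isoc EDH$-object for the embedding $k\pr_1\colon\Isoc EDH\into H$. I would then transport this along the canonical flip equivalence $\Isoc EDH\isoto\Isoc DEH$, $(y,x',g)\mapsto(x',y,g\inv)$, which carries $\pr_1$ on $\Isoc EDH$ to $\pr_2$ on $\Isoc DEH$; using that the category of objects relative to a faithful embedding depends only on its equivalence class (as recorded for $2$-isomorphic embeddings in \Cref{Rem:partial-trivial}, and true more generally by $2$-functoriality of~$\MM$), this gives that $x$ is also a $\isoc DEH$-object with respect to the other embedding $k\pr_2\colon\Isoc DEH\into H$.

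I do not expect any real obstacle here: all the content is in \Cref{Lem:factor}. The only two points deserving a line of care are (i) observing that the factorization of $\id_x$ genuinely exhibits $x$ as a retract of the produced object, hence as an object of the retract-closed subcategory $\MM(H;\isoc DEH)$, and (ii) bookkeeping the flip equivalence $\Isoc EDH\cong\Isoc DEH$ carefully enough to confirm that it swaps the two projections, so that the ``either embedding'' clause is honestly covered rather than only one of the two.
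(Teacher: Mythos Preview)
Your proposal is correct and takes essentially the same approach as the paper: apply \Cref{Lem:factor} to $\alpha=\id_x$, and conclude that $x$ is a retract of a $\isoc DEH$-object, hence lies in $\MM(H;\isoc DEH)$. The paper's proof is the one-line version of exactly this.

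One small simplification for the ``either embedding'' clause: you do not need to swap $D$ and~$E$ and invoke the flip equivalence. The isocomma square itself provides a 2-isomorphism $\gamma\colon j\pr_1\isoTo k\pr_2$ between the two embeddings of $\isoc DEH$ into~$H$, so \Cref{Rem:partial-trivial}\,\eqref{it:MHD-iso} applies directly to give $\MM(H;j\pr_1)=\MM(H;k\pr_2)$. Your route via the flip is also correct, just slightly longer.
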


\begin{proof}
If $x\in \MM(H;D)$ is also an $E$-object, then $\alpha=\id_x$ factors through an $E$-object, hence through a $\isoc{D}{E}{H}$-object by \Cref{Lem:factor}, hence $x\in \MM(H;\isoc{D}{E}{H})$.
\end{proof}

We shall now consider quotients $\frac{\MM(H)}{\MM(H;D)}$ of~$\MM(H)$ by~$D$-objects, as additive categories. See \Cref{Rec:add-quotient} if necessary.
\begin{Lem}
\label{Lem:ff-quotient}%
Let $D\overset{j}{\into} H$ and $E\overset{k}{\into} H$ and consider $\Isoc DEH\into H$ (either way). Then we have the right-hand commutative diagram of inclusions below
\[
\vcenter{\xymatrix@C=14pt@R=14pt{
& \isoc DEH \ar[dl]_-{\pr_1} \ar[dr]^-{\pr_2}
\\
D \ar[dr]_-{j} \ar@{}[rr]|-{\isoEcell{}}
&& E \ar[dl]^-{k}
\\
& H
}}
\qquad\leadsto\qquad\quad
\vcenter{\xymatrix@C=1pt@R=12pt{
& \MM(H;\isoc DEH) \kern-2em \ar@{^(->}[dl] \ar@{_(->}[dr]^-{}
\\
\kern-2em \MM(H;D) \ar@{_(->}[dr]_-{}
&& \MM(H;E) \ar@{^(->}[dl]^-{}
\\
&\MM(H)
}}
\]
Furthermore, the resulting canonical functor on the quotients
\begin{equation}
\label{eq:ff-quotient}%
\frac{\MM(H;D)}{\MM(H;\isoc DEH)}\too\frac{\MM(H)}{\MM(H;E)}
\end{equation}
is fully faithful.
\end{Lem}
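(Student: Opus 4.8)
The plan is to prove fullness and faithfulness of the functor \eqref{eq:ff-quotient} separately, both by direct diagram chasing in the additive quotients, using \Cref{Lem:factor} as the key tool. Throughout one should freely pass to strict objects: every $D$-object is a retract of a strict $j_*(w)$, and both sides of \eqref{eq:ff-quotient} are additive quotient categories, so it suffices to check fullness and faithfulness on hom-groups between strict $D$-objects.

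First I would handle \emph{fullness}. Let $x,y\in\MM(H;D)$ and let $\varphi\colon x\to y$ be a morphism in $\MM(H)/\MM(H;E)$; lift it to an honest $\varphi\colon x\to y$ in $\MM(H)$. This $\varphi$ is automatically a morphism in the source quotient $\MM(H;D)/\MM(H;\isoc DEH)$ (the hom-group there is a \emph{quotient} of $\MM(H;D)(x,y)=\MM(H)(x,y)$, since $\MM(H;D)$ is full in $\MM(H)$), and it maps to the given $\varphi$ on the right. Hence the functor is full essentially for free.

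The substance is \emph{faithfulness}, which is where I expect the main obstacle to lie. Suppose $\varphi\colon x\to y$ in $\MM(H;D)$ maps to zero in $\MM(H)/\MM(H;E)$, i.e.\ $\varphi$ factors through an $E$-object $e\in\MM(H;E)$. Here $x$ is a $D$-object by hypothesis. Now apply \Cref{Lem:factor} with the roles: $\alpha=\varphi\colon x\to y$ where $x$ is a $D$-object and $\alpha$ factors through the $E$-object $e$; the lemma yields that $\varphi$ factors through a $\isoc DEH$-object. Therefore $[\varphi]=0$ already in $\MM(H;D)/\MM(H;\isoc DEH)$, which is exactly faithfulness. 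The only care needed is the embedding of $\isoc DEH$ into $H$: \Cref{Lem:factor} produces the factorization through an object induced along $j\pr_1\colon\isoc DEH\into H$, i.e.\ with respect to the \emph{first} projection, which is precisely the convention recorded in the statement and in \Cref{Conv:partial}; for the other embedding one invokes the evident symmetry of the isocomma (swap the roles of $D$ and $E$, noting $\isoc DEH\cong\isoc EDH$), or simply observes that \Cref{Cor:filtered} already guarantees the two resulting subcategories $\MM(H;\pr_1)$ and $\MM(H;\pr_2)$ are interchangeable for objects, and a parallel rerun of \Cref{Lem:factor} with $x$ regarded as an $E$-object covers the factorization side. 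Assembling these, \eqref{eq:ff-quotient} is fully faithful, and one notes in passing that the commutative triangle of inclusions on the right of the displayed diagram is immediate from \Cref{Prop:D-obj-i^*}-style bookkeeping together with \Cref{Rem:partial-trivial}\,\eqref{it:MHE<MHD} applied to $\isoc DEH\into D$ and $\isoc DEH\into E$.
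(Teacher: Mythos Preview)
Your proposal is correct and follows essentially the same approach as the paper: fullness is immediate because $\MM(H;D)\hookrightarrow\MM(H)$ is a full subcategory and the quotient functor is full, while faithfulness is exactly \Cref{Lem:factor}. The only notable difference is your handling of the ``either way'' clause: you invoke symmetry of the isocomma and \Cref{Cor:filtered}, whereas the paper dispatches this in one stroke by observing that the isocomma 2-cell gives an isomorphism $j\pr_1\cong k\pr_2$ of functors into~$H$, so $\MM(H;j\pr_1)=\MM(H;k\pr_2)$ directly by \Cref{Rem:partial-trivial}\,\eqref{it:MHD-iso}.
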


\begin{proof}
The diagram of inclusions is immediate from $\MM(H;j \pr_1)=\MM(H;k \pr_2)=:\MM(H;\isoc DEH)$. Since both the inclusion $\MM(H;D)\hook \MM(H)$ and the projection $\MM(H)\onto \MM(H)/\MM(H;E)$ are full, the composite $\MM(H;D)\to \MM(H)/\MM(H;E)$ is certainly full and so is the induced functor~\eqref{eq:ff-quotient}. We need to show that the latter is faithful, which follows immediately from \Cref{Lem:factor}.
\end{proof}

\begin{Rem}
\label{Rem:disambig-D-objects}%
In \Cref{Lem:ff-quotient}, we saw quotients $\MM(H;D)/\MM(H;D')$ appear for the first time, with $D'\ointo{k} D\ointo{j} H$. Let us lift a possible ambiguity. By definition, this quotient consists of $D$-objects in~$\MM(H)$, with maps modulo $D'$-objects. It is however also the category of $D$-objects in~$\MM(H)/\MM(H;D')$, for $\MM(-)/\MM(-;D')$, that is, retracts of images of $j_*\colon \MM(D)/\MM(D;D')\to\MM(H)/\MM(H;D')$, which is well-defined by \Cref{Prop:D-obj-i_*}. Indeed, by \Cref{Rem:retract-in-quotient}, if $x\in \MM(H)$ is a retract in the quotient $\MM(H)/\MM(H;D')$ of some $j_*(y)$ for $y\in\MM(D)$ then $x$ is a retract of $j_*(y)\oplus (jk)_*(z)$ in~$\MM(H)$ for some $z\in\MM(D')$, and therefore $x$ is a retract of $j_*(y\oplus k_*(z))\in \MM(H;D)$. In other words, $\MM(H;D)/\MM(H;D')$ consists of the retract-closure in~$\MM(H)/\MM(H;D')$ of objects of the form $j_*(y)$ for $y\in\MM(D)$.
\end{Rem}

With \Cref{Prop:BN}, we get an easy condition for idempotent-completeness.

\begin{Prop}
\label{Prop:idempotent-complete}%
Let $\MM\colon \GG^\op\to \ADD$ be a Mackey 2-functor satisfying the following hypothesis:
\begin{enumerate}
  \item[$(\sqcup\Delta)$] For each $H\in \GG$, the category $\MM(H)$ admits countable coproducts and a triangulation such that $i^*\colon \MM(H)\to \MM(K)$ is exact for every $i\colon K\into H$.
\end{enumerate}
Then for every triple $D'\into D\into H$, the additive quotient $\MM(H;D)/\MM(H;D')$ is idempotent-complete.
\end{Prop}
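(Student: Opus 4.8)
The plan is to combine the B\"ockstedt--Neeman criterion (\Cref{Prop:BN}) with the ``exotic'' triangulation supplied by \Cref{Prop:triangulation-T/S}. First I would show that the \emph{larger} additive quotient $\cat{T}:=\MM(H)/\MM(H;D')$ is triangulated and admits countable coproducts, so that it is idempotent-complete by \Cref{Prop:BN}. Then I would observe that $\MM(H;D)/\MM(H;D')$ sits inside $\cat{T}$ as a \emph{retract-closed full subcategory}, which forces it to be idempotent-complete as well: any idempotent $e=e^2$ on one of its objects splits in $\cat{T}$, and its image, being a retract of that object, already lies in the subcategory, so the splitting takes place there. Throughout, write $j\colon D\into H$ for the given faithful morphism and $j'\colon D'\into H$ for the composite $D'\into D\into H$; then $\MM(H;D')=\add(j'_*\MM(D'))\subseteq\MM(H;D)\subseteq\MM(H)$, the first inclusion being the observation in \Cref{Rem:partial-trivial} that $\MM(H;E)\subseteq\MM(H;D)$ whenever $E\into D\into H$.

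For the triangulation on $\cat{T}$, I would apply \Cref{Prop:triangulation-T/S} to the exact functor $R:=(j')^{*}\colon\MM(H)\to\MM(D')$ — exact by hypothesis $(\sqcup\Delta)$, since $j'$ is a faithful morphism into $H$ — whose two-sided adjoint is $I:=j'_{*}\simeq j'_{!}$, combining (Mack~2) and (Mack~4) via \Cref{Rem:can-Theta}. Since $\add(I(\MM(D')))=\MM(H;D')$ by definition of $D'$-objects, that proposition triangulates $\MM(H)/\MM(H;D')=\cat{T}$. For countable coproducts in $\cat{T}$: hypothesis $(\sqcup\Delta)$ provides them in $\MM(H)$ and in $\MM(D')$. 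Since $j'_{*}\simeq j'_{!}$ is a left adjoint it preserves coproducts, so a countable coproduct of $D'$-objects is a retract of $\bigoplus_{n}j'_*(a_n)\cong j'_*\big(\bigoplus_{n}a_n\big)$ for suitable $a_n\in\MM(D')$, hence (as $\MM(H;D')$ is closed under retracts, \Cref{Not:add}) again lies in $\MM(H;D')$; thus $\MM(H;D')$ is closed under countable coproducts in $\MM(H)$. This closure is precisely what makes the $\MM(H)$-coproduct of a countable family $(x_{n})_{n}$ remain a coproduct in $\cat{T}$: a morphism $\bigoplus_{n}x_{n}\to w$ factors through a $D'$-object if and only if each of its components does, the forward implication being obvious and the backward one obtained by taking the coproduct of the individual factorizations (whose target is a $D'$-object by the closure just proved). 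Hence $\cat{T}$ is triangulated with countable coproducts, and \Cref{Prop:BN} makes it idempotent-complete.

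It remains to see that $\MM(H;D)/\MM(H;D')$ is a retract-closed full subcategory of $\cat{T}$. It is full: its objects are exactly the $D$-objects of $\MM(H)$, and for two such objects its Hom-group is $\MM(H)(x,y)$ modulo maps factoring through a $D'$-object — the same group as in $\cat{T}$, because $\MM(H;D')\subseteq\MM(H;D)$ are both full subcategories of $\MM(H)$ (\cf \Cref{Rem:disambig-D-objects}). It is retract-closed: if $y$ is a retract in $\cat{T}$ of a $D$-object $x$, then by \Cref{Rem:retract-in-quotient} $y$ is a retract \emph{in $\MM(H)$} of $x\oplus z$ for some $z\in\MM(H;D')\subseteq\MM(H;D)$, and since $\MM(H;D)$ is closed under finite direct sums and under retracts, $y$ is a $D$-object. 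By the reduction of the first paragraph, $\MM(H;D)/\MM(H;D')$ is therefore idempotent-complete.

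The step I expect to require the most care is the treatment of \emph{countable} coproducts in the second paragraph: checking that $\MM(H;D')$ is closed under them inside $\MM(H)$ and, consequently, that coproducts descend along the additive quotient $\MM(H)\onto\cat{T}$. Everything else is a bookkeeping assembly of \Cref{Prop:BN}, \Cref{Prop:triangulation-T/S}, (Mack~2)--(Mack~4) and the elementary stability properties of categories of $D$-objects recorded earlier in this section.
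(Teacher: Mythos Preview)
Your proposal is correct and follows essentially the same route as the paper: reduce to showing that the ambient quotient $\MM(H)/\MM(H;D')$ is idempotent-complete by applying \Cref{Prop:triangulation-T/S} (with $R=(j')^*$, $I=j'_*$) and \Cref{Prop:BN}, then use that $\MM(H;D)/\MM(H;D')$ is retract-closed inside it. You supply more detail than the paper on why coproducts descend to the quotient and on retract-closure (the paper simply invokes \Cref{Rem:disambig-D-objects} for the latter and the fact that $\ell_*$ is a left adjoint for the former), but the architecture is identical.
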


\begin{proof}
As explained in \Cref{Rem:disambig-D-objects}, $\MM(H;D)/\MM(H;D')$ is closed under retracts in the ambient category~$\MM(H)/\MM(H;D')$. So it suffices to prove that the latter is idempotent-complete. We claim that it is triangulated and admits countable coproducts, so the result follows from \Cref{Prop:BN}. To get the triangulation, we can apply \Cref{Prop:triangulation-T/S} with $\cat{T}=\MM(H)$ and $\cat{S}=\MM(D')$ and the functors $R=\ell^*$ and $I=\ell_*$ for $\ell\colon D'\into H$. For coproducts, since $\MM(H)$ admits them, it suffices to show that $\MM(H;D')$ is closed under arbitrary coproducts. This follows from the fact that~$I=\ell_*$ commutes with coproducts, like every left adjoint.
\end{proof}

Returning to the functor~\eqref{eq:ff-quotient}, we now deduce from \Cref{Lem:ff-quotient} a statement which has the flavor of a `Second Isomorphism Theorem':

\begin{Cor}
\label{Cor:second-iso-thm}%
Let $j\colon D\into H$ and $k\colon E\into H$ and consider $(j, k)\colon D\sqcup E\into H$ as well as the isocomma~$\Isoc DEH\into H$ (either way). Then the canonical functor
\begin{equation}
\label{eq:second-iso-thm}%
\frac{\MM(H;D)}{\MM(H;\isoc DEH)}\too\frac{\MM(H;D\sqcup E)}{\MM(H;E)}
\end{equation}
given by the identity on objects and morphisms, is fully-faithful and induces an equivalence on idempotent-completions, \ie it is an equivalence-up-to-retracts.
\end{Cor}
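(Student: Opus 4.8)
The plan is to mirror the proof of \Cref{Lem:ff-quotient} and then bolt on one short retract-counting argument; no new input about groupoids is needed. First I would check that \eqref{eq:second-iso-thm} makes sense. The groupoid $\isoc DEH$ maps to $H$ both via $\pr_1$ (which factors through $D\into H$) and via $\pr_2$ (which factors through $E\into H$), so by \Cref{Rem:partial-trivial} the subcategory $\MM(H;\isoc DEH)$ does not depend on which of the two embeddings is used, and it satisfies $\MM(H;\isoc DEH)\subseteq\MM(H;D)$ as well as $\MM(H;\isoc DEH)\subseteq\MM(H;E)$. The first inclusion makes the left-hand side of \eqref{eq:second-iso-thm} a genuine additive quotient; the second, combined with $\MM(H;D)\subseteq\MM(H;D\sqcup E)$, shows that ``the identity on objects and morphisms'' really descends to the displayed functor, because any morphism of $\MM(H)$ that becomes zero in the source (\ie factors through an $\isoc DEH$-object) also becomes zero in the target (it factors through an $E$-object).

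For full-faithfulness I would argue just as in \Cref{Lem:ff-quotient}. Fullness is automatic: \eqref{eq:second-iso-thm} is the composite of the full inclusion $\MM(H;D)\hook\MM(H;D\sqcup E)$ with the additive quotient $\MM(H;D\sqcup E)\onto\MM(H;D\sqcup E)/\MM(H;E)$, and additive quotients are always full. Faithfulness is precisely \Cref{Lem:factor}: if a morphism between two $D$-objects vanishes in $\MM(H;D\sqcup E)/\MM(H;E)$ then, since $\MM(H;D\sqcup E)\hook\MM(H)$ is full, it factors in $\MM(H)$ through an $E$-object, hence by \Cref{Lem:factor} through an $\isoc DEH$-object, so it already vanished in $\MM(H;D)/\MM(H;\isoc DEH)$.

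It remains to prove that \eqref{eq:second-iso-thm} is surjective-up-to-retracts, which by \Cref{Def:up-to-retracts} promotes full-faithfulness to an equivalence on idempotent-completions. By (Mack~1) we have $\MM(D\sqcup E)=\MM(D)\oplus\MM(E)$ with induction splitting as $j_*\oplus k_*$, so $\MM(H;D\sqcup E)$ consists of the retracts in $\MM(H)$ of objects $j_*(n)\oplus k_*(m)$ with $n\in\MM(D)$ and $m\in\MM(E)$. Given such a $w\leq j_*(n)\oplus k_*(m)$, the summand $k_*(m)$ is an $E$-object and so becomes zero in $\MM(H)/\MM(H;E)$; hence $w\leq j_*(n)$ already in $\MM(H)/\MM(H;E)$, and therefore also in its full subcategory $\MM(H;D\sqcup E)/\MM(H;E)$ (full because $\MM(H;D\sqcup E)\hook\MM(H)$ is). Since $j_*(n)$ is the image under \eqref{eq:second-iso-thm} of $j_*(n)\in\MM(H;D)$, every object of the target is a retract of one in the image, as required.

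I do not anticipate a genuine obstacle: the real content is already packaged in \Cref{Lem:factor} (through \Cref{Lem:ff-quotient}), and the rest is bookkeeping. The two points deserving a little care are keeping the inclusions $\MM(H;\isoc DEH)\subseteq\MM(H;D)$ and $\MM(H;\isoc DEH)\subseteq\MM(H;E)$ in their separate roles --- one licenses forming the source quotient, the other licenses defining the functor --- and the elementary observation that annihilating $k_*(m)$ in the additive quotient turns the splitting $w\leq j_*(n)\oplus k_*(m)$ into $w\leq j_*(n)$.
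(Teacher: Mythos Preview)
Your proposal is correct and follows essentially the same route as the paper: full-faithfulness via \Cref{Lem:factor} (the paper packages this step by invoking \Cref{Lem:ff-quotient} and a commutative triangle through $\MM(H)/\MM(H;E)$, but the content is identical), and surjectivity-up-to-retracts by noting that $k_*(m)$ vanishes in the quotient so any $w\le j_*(n)\oplus k_*(m)$ becomes a retract of $j_*(n)$. Your extra paragraph on well-definedness is a welcome clarification the paper leaves implicit.
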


\begin{proof}
We have $\MM(H;E)\subseteq\MM(H;D\sqcup E)\subseteq\MM(H)$. So, in the following commutative diagram of canonical functors, the right-hand vertical functor is fully faithful
\[
\xymatrix@C=4em@R=1em{
\Displ\frac{\MM(H;D)}{\MM(H;\isoc DEH)} \ar[r]^-{\textrm{\eqref{eq:second-iso-thm}}} \ar[rd]_-{\textrm{\eqref{eq:ff-quotient}}}
& \Displ \frac{\MM(H;D\sqcup E)}{\MM(H;E)} {}_\vcorrect{1.5} \ar@{_(->}[d]
\\
& \Displ \frac{\MM(H)}{\MM(H;E)}\,.
}
\]
As the diagonal functor~\eqref{eq:ff-quotient} is fully faithful by \Cref{Lem:ff-quotient}, so is our horizontal functor~\eqref{eq:second-iso-thm}. Let now $m\in \MM(H;D\sqcup E)$ and choose $(x,y)\in \MM(D)\oplus\MM(E)=\MM(D\sqcup E)$ such that $m\le j_*(x)\oplus k_*(y)$. Since $k_*(y)=0$ in the quotient modulo~$\MM(H;E)$, we see that $m$ is a retract of the strict $D$-object $j_*(x)$ in $\frac{\MM(H;D\sqcup E)}{\MM(H;E)}$. As $j_*(x)\in \MM(H;D)$, this shows that~\eqref{eq:second-iso-thm} is surjective-up-to-retracts.
\end{proof}

\begin{Exa}
When $D\ointo{j} H\ointo{i} G$ we have seen in Example~\ref{Exas:partial-HD}\,\eqref{Exa:partial-HD} a decomposition to which we can apply \Cref{Cor:second-iso-thm}, namely $\Isoc HDG \cong D\sqcup \partial_i(H,D)$.
\end{Exa}

\begin{Prop}
\label{Prop:proto-Green}%
Let $D\ointo{j} H\ointo{i} G$. The canonical functor (see \Cref{Conv:partial})
\[
\frac{\MM(H;D)}{\MM(H;\partial_i(D,D))}\too \frac{\MM(H;\isoc HDG)}{\MM(H;\partial_i(H,D))}
\]
is an equivalence-up-to-retracts.
\end{Prop}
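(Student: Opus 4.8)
The plan is to derive this directly from the ``Second Isomorphism Theorem'' \Cref{Cor:second-iso-thm}, applied with the right choice of auxiliary groupoid, namely $E:=\partial_i(H,D)$ embedded in~$H$ via its first projection $\pr_1\colon\partial_i(H,D)\into H$. First I would observe that this $\pr_1$ is indeed faithful: in any isocomma $\isoc ABC$ formed over $a\colon A\to C$ and $b\colon B\to C$, the projection $\pr_1$ is faithful as soon as $b$ is faithful (a pair of parallel morphisms $(h,k),(h,k')$ with the same first component forces $b(k)g=g'a(h)=b(k')g$, hence $k=k'$); here $b=ij\colon D\into G$ is faithful, so $\pr_1\colon\Isoc HDG\to H$ is faithful and so is its restriction to the subgroupoid $\partial_i(H,D)$. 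With this in hand, \Cref{Cor:second-iso-thm} applied to $j\colon D\into H$ and $k=\pr_1\colon\partial_i(H,D)\into H$ yields an equivalence-up-to-retracts
\[
\frac{\MM(H;D)}{\MM(H;\isoc D{\partial_i(H,D)}H)}\too\frac{\MM(H;D\sqcup\partial_i(H,D))}{\MM(H;\partial_i(H,D))},
\]
and it only remains to identify its source and target with those in the statement.

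For the target, I would invoke Example~\ref{Exas:partial-HD}\,\eqref{Exa:partial-HD}: the decomposition~\eqref{eq:partial-def} for $(E,F)=(H,D)$ combined with the equivalence $\Isoc HDH\cong D$ of \Cref{Exa:isoc-triv} gives $\Isoc HDG\cong D\sqcup\partial_i(H,D)$, and one checks — using \Cref{Rem:partial-proj} together with the explicit form $\langle j,\Id_D,\id_j\rangle$ of the equivalence $D\isoto\Isoc HDH$ — that under this equivalence the first projection $\pr_1\colon\Isoc HDG\to H$ corresponds to $(j,\pr_1)\colon D\sqcup\partial_i(H,D)\to H$. Since $\MM(H;-)$ depends only on the isomorphism class of the chosen faithful morphism (first item of \Cref{Rem:partial-trivial}), this gives $\MM(H;\isoc HDG)=\MM(H;D\sqcup\partial_i(H,D))$ as subcategories of~$\MM(H)$, while the denominators $\MM(H;\partial_i(H,D))$ coincide on the nose.

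For the source, I would apply \Cref{Lem:geography} with $D_1=D_2=D$ (and $j_1=j_2=j$), which delivers an equivalence $\Isoc D{\partial_i(H,D)}H\cong\partial_i(D,D)$. The key point to harvest from its proof is that this equivalence is compatible with the first projections to~$D$: in the comparison triangle at the end of that proof, both summand-equivalences have first component~$(\pr_1,\pr_1)$, so they restrict to an equivalence of the relevant summands commuting with~$\pr_1$; composing with~$j$, the two tacit embeddings into~$H$ (per \Cref{Conv:partial}) agree up to isomorphism of morphisms. Hence $\MM(H;\isoc D{\partial_i(H,D)}H)=\MM(H;\partial_i(D,D))$, again by \Cref{Rem:partial-trivial}. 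Under the two identifications just obtained, the canonical functor of the statement is literally the functor~\eqref{eq:second-iso-thm} — both are induced by the evident inclusions and act as the identity on objects and morphisms of $\MM(H)$ — so it is an equivalence-up-to-retracts.

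The one delicate point, and the place I would be most careful, is precisely this bookkeeping with the embeddings into~$H$: because $\MM(H;-)$ only sees the isomorphism class of the morphism, the whole argument rests on checking that the groupoid equivalences $\Isoc HDG\cong D\sqcup\partial_i(H,D)$ and $\Isoc D{\partial_i(H,D)}H\cong\partial_i(D,D)$ are compatible — up to isomorphism — with the pertinent first projections and with~$j$. Once that is granted, the rest is formal and reduces to a single invocation of \Cref{Cor:second-iso-thm}.
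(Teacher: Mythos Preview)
Your proposal is correct and follows essentially the same route as the paper: apply \Cref{Cor:second-iso-thm} with $E=\partial_i(H,D)$, identify the target via the decomposition $\Isoc HDG\cong D\sqcup\partial_i(H,D)$ of Example~\ref{Exas:partial-HD}\,\eqref{Exa:partial-HD}, and identify the source denominator via \Cref{Lem:geography} giving $\Isoc D{\partial_i(H,D)}H\cong\partial_i(D,D)$. You are simply more explicit about the faithfulness of $\pr_1$ and the compatibility of embeddings into~$H$, which the paper leaves implicit under \Cref{Conv:partial} and \Cref{Rem:partial-trivial}.
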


\begin{proof}
Use $\Isoc HDG\cong D\sqcup \partial_i(H,D)$ by Example~\ref{Exas:partial-HD}\,\eqref{Exa:partial-HD} and apply \Cref{Cor:second-iso-thm} with $E=\partial_i(H,D)$. To apply \Cref{Cor:second-iso-thm}, we need to compute $\Isoc DEH$. But \Cref{Lem:geography} precisely gives us $\Isoc DEH=\Isoc D{\partial_i(H,D)}H\cong \partial_i(D,D)$.
\end{proof}

\begin{Thm}[The Green equivalence]
\label{Thm:Green-equivalence}%
Let $D\ointo{j} H\ointo{i} G$ be faithful morphisms of groupoids in~$\GG$ and let $\MM$ be a Mackey 2-functor. Then the induction functor $i_*\colon \MM(H)\to \MM(G)$ yields a well-defined equivalence of categories on the following idempotent-completions (\Cref{Def:idempotent-completion}) of additive quotients (\Cref{Rec:add-quotient})
\begin{equation}
\label{eq:Green-i_*}%
i_*\colon \bigg(\frac{\MM(H;D)}{\MM(H;\partial_i(D,D))}\bigg)^\natural \isotoo \bigg(\frac{\MM(G;D)}{\MM(G;\partial_i(D,D))}\bigg)^\natural.
\end{equation}
Moreover, the restriction functor $i^*\colon \MM(G)\to \MM(H)$ maps $\MM(G;\partial_i(D,D))$ into $\MM(H;\partial_i(H,D))$ and defines an equivalence
\begin{equation}
\label{eq:Green-i^*}%
i^*\colon \bigg(\frac{\MM(G;D)}{\MM(G;\partial_i(D,D))}\bigg)^\natural \isotoo \bigg(\frac{\MM(H;\isoc HDG)}{\MM(H;\partial_i(H,D))}\bigg)^\natural.
\end{equation}
Finally, the composite of those two equivalences is isomorphic to the canonical functor $\big(\frac{\MM(H;D)}{\MM(H;\partial_i(D,D))}\big)^\natural\isoto\big(\frac{\MM(H;\isoc HDG)}{\MM(H;\partial_i(H,D))}\big)^\natural$ of \Cref{Prop:proto-Green}.
\end{Thm}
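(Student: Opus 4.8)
The plan is to realise $i_*$ and $i^*$ as functors between the displayed quotient categories, to identify their composite with the canonical functor $c$ of \Cref{Prop:proto-Green}, and then to deduce the two equivalences.

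\emph{Descending to the quotients.} The functor $i_*$ sends $D$-objects to $D$-objects by \Cref{Prop:D-obj-i_*}, and since $\partial_i(D,D)$ is embedded in $H$ via $j\,\pr_1$ (\Cref{Conv:partial}) and $i_*(j\,\pr_1)_*\cong(ij\,\pr_1)_*$, it also sends $\partial_i(D,D)$-objects of $\MM(H)$ to $\partial_i(D,D)$-objects of $\MM(G)$; hence it descends to the additive quotients and then to idempotent-completions. For $i^*$: \Cref{Prop:D-obj-i^*} gives $i^*(\MM(G;D))\subseteq\MM(H;\isoc HDG)$, and the same statement applied with $E=\partial_i(D,D)$ gives $i^*(\MM(G;\partial_i(D,D)))\subseteq\MM(H;\isoc{H}{\partial_i(D,D)}{G})$; by \Cref{Lem:tricky} the embedding $\pr_1\colon\isoc{H}{\partial_i(D,D)}{G}\into H$ factors through $\pr_1\colon\partial_i(H,D)\into H$, and the intervening functor is faithful (a factor of a faithful functor through a faithful one is faithful), so $\MM(H;\isoc{H}{\partial_i(D,D)}{G})\subseteq\MM(H;\partial_i(H,D))$. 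This is precisely the middle assertion, and it makes $i^*$ descend to the quotients and to idempotent-completions.

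\emph{Identifying the composite with $c$.} The Mackey formula for the Mackey square over $\isoc HHG$, together with the decomposition $\isoc HHG\cong H\sqcup\partial_i(H,H)$ of Example~\ref{Exas:partial-HD}\,\eqref{Exa:partial-HH}, gives a natural isomorphism $i^*i_*\cong\Id_{\MM(H)}\oplus r$, in which the $\Id$-summand is carried by the diagonal copy of $H$ and $r$ is supported on $\partial_i(H,H)$; the split mono and split epi in this decomposition are the unit of $i_*\dashv i^*$ and the counit of $i^*\dashv i_*$. The crucial point is that $r$ carries $D$-objects to $\partial_i(H,D)$-objects: for a strict $D$-object $j_*(z)$ the Mackey formula rewrites $r(j_*(z))$ as an object induced along $\isoc{\partial_i(H,H)}{D}{H}\into H$, and this isocomma is identified---compatibly with its embedding into $H$---with $\partial_i(H,D)$ by unwinding the $\partial$-decomposition of $\isoc{(\isoc HHG)}{D}{H}\cong\isoc HDG$ via diagonality (\Cref{Prop:diag}) and \Cref{Lem:geography}. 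Granting this, $i^*i_*(m)\cong m$ in $\frac{\MM(H;\isoc HDG)}{\MM(H;\partial_i(H,D))}$, naturally in the $D$-object $m$; since $m\mapsto i^*i_*(m)$ is the composite of the descended $i_*$ and $i^*$, while $m\mapsto m$ is the identity-on-objects functor $c$ of \Cref{Prop:proto-Green}, that composite is isomorphic to $c$. This proves the last assertion of the theorem, and since $c$ is an equivalence-up-to-retracts, the composite of the two functors on idempotent-completions is an equivalence.

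\emph{Deducing the two equivalences.} Both descended functors are surjective-up-to-retracts: every object of $\MM(G;D)$ is a retract of a strict $D$-object $(ij)_*(x)=i_*(j_*(x))$, and every object of $\MM(H;\isoc HDG)$ is a retract of a strict $\isoc HDG$-object $(\pr_1)_*(y)$, which by the previous step is a retract of $i^*i_*\big((\pr_1)_*(y)\big)=i^*\big((i\,\pr_1)_*(y)\big)$, with $(i\,\pr_1)_*(y)$ a $D$-object of $\MM(G)$ by the $2$-cell of $\isoc HDG$. It therefore suffices to prove that the descended $i^*$ is faithful: then, being surjective-up-to-retracts and having composite after the (essentially surjective) descended $i_*$ equal to the equivalence $c$, it is automatically full after idempotent-completion, hence an equivalence on idempotent-completions (\Cref{Def:up-to-retracts}); and then the descended $i_*$, being isomorphic to $(i^*)^{-1}\circ c$ after idempotent-completion, is an equivalence too. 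For faithfulness: reduce by a routine retract argument to strict $D$-objects $m=(ij)_*(w)$, $m'=(ij)_*(w')$ and write $i^*m'\cong j_*(w')\oplus r(j_*(w'))$ with $r(j_*(w'))\in\MM(H;\partial_i(H,D))$. If $i^*(\alpha)$ factors through a $\partial_i(H,D)$-object, then the $i_*\dashv i^*$-adjunct $\alpha^\flat\colon j_*(w)\to i^*m'$ of $\alpha$ factors through it, and combining the triangle identities with $\alpha=\varepsilon_{m'}\circ i_*(\alpha^\flat)$ ($\varepsilon$ the counit of $i_*\dashv i^*$) presents $\alpha$ as a sum of morphisms of $\MM(G)$, each of the form $i_*$ applied to a morphism of $\MM(H)$ out of the $D$-object $j_*(w)$ that either factors through a $\partial_i(H,D)$-object or has one as target. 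By \Cref{Lem:factor} each such morphism of $\MM(H)$ factors through a $\isoc{D}{\partial_i(H,D)}{H}$-object, and $\isoc{D}{\partial_i(H,D)}{H}\cong\partial_i(D,D)$ by \Cref{Lem:geography}; applying $i_*$ and using $i_*(j\,\pr_1)_*\cong(ij\,\pr_1)_*$ once more turns $\alpha$ into a sum of morphisms factoring through $\partial_i(D,D)$-objects of $\MM(G)$, so $\alpha$ factors through one and vanishes in the quotient. The main obstacle is the groupoid combinatorics behind the second and third steps---the identifications $\isoc{\partial_i(H,H)}{D}{H}\cong\partial_i(H,D)$ and $\isoc{D}{\partial_i(H,D)}{H}\cong\partial_i(D,D)$, compatibly with the embeddings into $H$---which is exactly what \Cref{Lem:geography} and \Cref{Prop:diag} are designed to supply.
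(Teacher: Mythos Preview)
Your proposal is correct and follows essentially the same route as the paper: well-definedness via \Cref{Prop:D-obj-i_*}, \Cref{Prop:D-obj-i^*} and \Cref{Lem:tricky}; the identification $i^*i_*\cong\Id\oplus r$ with $r$ killed modulo~$\partial_i(H,D)$ via the Mackey square from \Cref{Lem:geography}; faithfulness of~$i^*$ via the adjunct~$\alpha^\flat$ and \Cref{Lem:factor} combined with $\isoc{D}{\partial_i(H,D)}{H}\cong\partial_i(D,D)$; and the final two-out-of-three deduction. Your faithfulness paragraph is slightly over-elaborate---once $\alpha^\flat=i^*(\alpha)\circ\eta_{j_*(w)}$ factors through a $\partial_i(H,D)$-object, a single application of \Cref{Lem:factor} and then of~$i_*$ suffices, with no ``sum of morphisms'' needed---but the argument is sound.
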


\begin{proof}
By \Cref{Prop:D-obj-i_*}, we know that $i_*\colon \MM(H)\to \MM(G)$ restricts to a functor $\MM(H;E)\to \MM(G;E)$ for all subgroupoids~$E\into H$ that we care about, namely $D$ and $\partial_i(D,D)$. So the induction functor~\eqref{eq:Green-i_*} is well-defined. With restriction we need to be a little more careful. The general \Cref{Prop:D-obj-i^*} only tells us that $i^*\colon \MM(G)\to \MM(H)$ restricts to $\MM(G;D)\to \MM(H;\isoc HDG)$ and further to $\MM(G;\partial_i(D,D))\to \MM(H;\isoc H{\partial_i(D,D)}G)$. So the `numerators' in~\eqref{eq:Green-i^*} are fine but for the `denominators' we still need to prove the inclusion $\MM(H;\isoc H{\partial_i(D,D)}G)\subseteq\MM(H,\partial_i(H,D))$. For this it suffices that the morphism $\isoc H{\partial_i(D,D)}G\into H$ (given by $\pr_1$) factors via $\pr_1\colon \partial_i(H,D)\into H$. This is precisely the content of \Cref{Lem:tricky}. Therefore all functors in the following diagram are well-defined:
\begin{equation}
\label{eq:aux-Green}%
\vcenter{\xymatrix@C=5em@R=2em{
\Displ\frac{\MM(H;D)}{\MM(H;\partial_i(D,D))} \ar[d]^-{i_*} \ar@/^1em/[rd]^-{\textrm{(\ref{Prop:proto-Green})}}_-{\natural\textrm{-}\cong}
\\
\Displ\frac{\MM(G;D)}{\MM(G;\partial_i(D,D))} \ar[r]^-{i^*}
& \Displ\frac{\MM(H;\isoc HDG)}{\MM(H;\partial_i(H,D))}
}}
\end{equation}
We first claim that this diagram commutes up to isomorphism. Indeed, recall from~\cite[(Mack\,9) in Theorem~1.2.1]{BalmerDellAmbrogio18pp} that the unit $\eta\colon \Id_{\MM(H)}\Rightarrow i^*i_*\colon \MM(H)\to \MM(H)$ of the $i^*\dashv i_*$ adjunction is a naturally split monomorphism. More precisely, if we decompose the self-isocomma $\isoc HHG$ as in Example~\ref{Exas:partial-HD}\,\eqref{Exa:partial-HH}
\begin{equation*}
\vcenter{
\xymatrix@C=14pt@R=14pt{
& \kern-1em H\sqcup \partial_i(H,H) \kern-1em \ar[ld]_(.6){(\Id_H\,,\, p_1)} \ar[rd]^(.6){(\Id_H\,,\, p_2)} \ar@{}[dd]|{\isocell{(\id_i \,,\, \gamma')}}
\\
H \ar[rd]_-{i}
&& H \ar[ld]^-{i}
\\
& G
}}
\end{equation*}
then $\smat{\eta & \gamma'_!}\colon \Id_{\MM(H)}\oplus {p_1}_*p_2^*\isoTo i^*i_*$. (We temporarily denote the projections restricted to $\partial_i(H,H)$ by $p_1$ and $p_2$ to avoid confusion.) So the `difference' between $i^*i_*$ and $\Id_{\MM(H)}$ is given by~${p_1}_*p_2^*$. When evaluating ${p_1}_*p_2^*$ at a strict $D$-object in~$\MM(H)$, say, $x=j_*(y)$ for some $y\in \MM(D)$, we need to use a diagram of Mackey squares as in \Cref{Lem:geography} (see the bottom-right of~\eqref{eq:geography} for $D_2=D$):
\begin{equation}
\label{eq:bottom-right-big-diag}%
\vcenter{
\xymatrix@C=14pt@R=14pt{
&& \kern-1em D\sqcup \partial_i(H,D) \kern-1em \ar[ld]_(.6){j \,\sqcup\, \partial_i(H,j) \;\;} \ar[rd]^(.6){(\Id_D\,,\, \pr_2)}
\\
& \kern-1em H\sqcup \partial_i(H,H) \kern-1em \ar[ld]_(.6){(\Id_H\,,\, p_1)} \ar[rd]^(.6){\; \; (\Id_H\,,\, p_2)} \ar@{}[dd]|{\isocell{(\id_i\,,\, \gamma')}}
&& D \ar[ld]^-{j}
\\
H \ar[rd]_-{i}
&& H \ar[ld]^-{i}
\\
& G
}}
\end{equation}
We can then compute
\[
{p_1}_*p_2^*j_*(y)\simeq {p_1}_*(\partial_i(H,j))_*\pr_2^*(y)\simeq (p_1\circ\partial_i(H,j))_*\pr_2^*(y).
\]
Since $p_1\circ \partial_i(H,j)=\pr_1\colon \partial_i(H,D)\into H$, we see that the above `difference' object ${p_1}_*p_2^*j_*(y)$ between $x=j_*(y)$ and $i^*i_*(x)\simeq x\oplus {p_1}_*p_2^*j_*(y)$ is actually zero in the quotient by $\MM(H,\partial_i(H,D))$. This proves that $\eta$ induces an isomorphism making~\eqref{eq:aux-Green} commute, $\Id\isoTo i^*i_*$, as claimed.

We then claim that $i^*$ in~\eqref{eq:aux-Green} is faithful. Let $x,y\in \MM(G;D)$ be $D$-objects over~$G$ and $\alpha\colon x\to y$ such that $i^*(\alpha)$ factors via a (strict) $\partial_i(H,D)$-object in~$\MM(H)$. We need to show that $[\alpha]=0$ in $\MM(G)/\MM(G;\partial_i(D,D))$. Up to retraction, we can assume that $x$ and $y$ are strict $D$-objects and in particular $x=i_*(w)$ for a (strict) $D$-object $w\in \MM(H;D)$ over~$H$. So we have $\alpha\colon x=i_*(w)\to y$. By adjunction, there exits a map $\beta\colon w\to i^*(y)$ in $\MM(H)$ adjoint to~$\alpha$, \ie such that
\begin{equation}
\label{eq:aux-alpha-beta}%
\alpha=\eps_y\circ i_*(\beta)\colon \quad x=i_*(w)\xto{i_*(\beta)} i_*i^*(y) \oto{\eps} y
\end{equation}
in~$\MM(G)$. The inverse formula for $\beta$ is that $\beta=i^*(\alpha)\circ\eta_w$ and in particular, since $i^*(\alpha)$ is assumed to factor via a $\partial_i(H,D)$-object then so does $\beta$. But $\beta\colon w\to i^*(y)$ starts from a (strict) $D$-object. So by \Cref{Lem:factor} for $E=\partial_i(H,D)$, we know that $\beta$ also factors via a $\isoc D{\partial_i(H,D)}H$-object. By \Cref{Lem:geography} (for $D_1=D_2=D$) we know that $\isoc D{\partial_i(H,D)}H\cong\partial_i(D,D)$. So $\beta$ factors over~$H$ via a strict  $\partial_i(D,D)$-object, say $(j\pr_1)_*(z)$ and therefore $i_*(\beta)$ factors via $(ij\pr_1)_*(z)$, which is now a $\partial_i(D,D)$-object over~$G$. Combining this with~\eqref{eq:aux-alpha-beta}, we see that $\alpha$ factors via the same object, that is, $[\alpha]=0$ in $\MM(G)/\MM(G;\partial_i(D,D))$, as claimed.

Returning to~\eqref{eq:aux-Green}, we now know that $i^*\circ i_*$ is an equivalence-up-to-retracts and that $i^*$ is faithful. From the former, it follows that $i^*$ is surjective-up-to-retracts and that $i^*$ is full at least on images of~$i_*$. But every object of~$\MM(G;D)/\MM(G;\partial_i(D,D))$ is a retract of such an image under~$i_*$ hence $i^*$ is indeed full. In short, $i^*$ is an equivalence-up-to-retracts and therefore so is~$i_*$.
\end{proof}

%
\section{The Krull-Schmidt case}
\label{sec:Green-corr}%

We now specialize the Green equivalence of \Cref{sec:Green} to the classical setting.
\begin{Hyp}
\label{Hyp:KS}%
In this section, $\MM\colon \GG^\op\to \ADD$ will be a Mackey 2-functor such that each category~$\MM(G)$ is \emph{Krull-Schmidt} (see \Cref{Rec:KS}).
\end{Hyp}

In this situation it becomes possible to explicitly describe a quasi-inverse to the Green equivalence in terms of the restriction functor; see \Cref{Cor:Green-eq-KS}. The \emph{Green correspondence} for indecomposable objects can then be deduced from it; see \Cref{Cor:Green-corr}. Specializing to groups as in \Cref{Rem:groupist-translation}, we recover Green's original results \cite{Green72} and \cite{Green64}  in modular representation theory; see \Cref{Exa:Green-corr-modrep}.

We did not use Krull-Schmidt so far. Let us remind the reader.

\begin{Rec}
\label{Rec:KS}%
An additive category $\cat A$ is \emph{Krull-Schmidt} if every object $x\in\cat{A}$ admits a decomposition $x\simeq x_1\oplus \ldots \oplus x_r$ where all $x_i$ have \emph{local} endomorphism rings. (A nonzero ring is local if its non-invertible elements are closed under addition.) An object with local endomorphism ring is indecomposable. Then $\cat{A}$ has the \emph{Krull-Schmidt decomposition property}: For two decompositions as above $x_1\oplus \ldots \oplus x_r\simeq x'_1\oplus \ldots \oplus x'_s$ we have $r=s$ and isomorphisms $x_i \simeq y_{\sigma(i)}$ for a permutation $\sigma \in \Sigma_r$.
By \cite[Cor.\,4.4]{Krause15}, the category $\cat{A}$ is Krull-Schmidt if and only if it is idempotent-complete and $\End_{\cat{A}}(x)$ is semi-perfect for all~$x\in\cat{A}$. As finite-dimensional algebras are semi-perfect, an idempotent-complete $\cat A$ is Krull-Schmidt if it is Hom-finite over a field~$\kk$ (\ie $\kk$-linear and $\dim_{\kk}(\Hom(x,y))<\infty$ for all $x,y\in\cat{A}$).
\end{Rec}

\begin{Rem}
\label{Rem:KS-quotients}%
Let $\cat{A}$ be Krull-Schmidt and let $\cat{B}\subseteq\cat{A}$ be a full additive subcategory closed under retracts. Then $\cat{B}$ is clearly Krull-Schmidt. The quotient $\cat A/\cat B$ is also Krull-Schmidt. Indeed, the functor $\cat A\to \cat A/\cat B$ sends finite direct sums to (possibly shorter) finite direct sums; it also sends an object with local endomorphism ring in $\cat A$ either to zero or again to such an object of~$\cat A/\cat B$, because $\End_{\cat A/\cat B}(x)$ is a quotient ring of $\End_{\cat A}(x)$. In particular, under \Cref{Hyp:KS}, the categories $\MM(G;D)$ and $\MM(G;D)/\MM(G;E)$ we considered in \Cref{sec:Green} are all Krull-Schmidt. Furthermore:
\end{Rem}

\begin{Lem}
\label{Lem:quot-indec}
Let $\cat{B}\subseteq\cat{A}$ be an additive subcategory closed under retracts. The quotient $\cat A\to \cat A/\cat B$ gives a bijection between the isomorphism classes of indecomposable objects in $\cat A$ not belonging to~$\cat B$ and those of~$\cat A/\cat B$. Furthermore, if $x$ has no summand in~$\cat{B}$ and is indecomposable in~$\cat{A}/\cat{B}$ then $x$ is indecomposable in~$\cat{A}$.
\end{Lem}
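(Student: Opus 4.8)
The plan is to reduce everything to understanding how the quotient functor $\cat A\to\cat A/\cat B$ behaves on a single indecomposable object, using throughout that $\cat A$ is Krull-Schmidt — so a nonzero object is indecomposable exactly when its endomorphism ring is local — and that $\cat B$ is closed under retracts — so an object of $\cat A$ lies in $\cat B$ as soon as it is a retract of an object of $\cat B$ (in particular, as soon as it becomes zero in $\cat A/\cat B$).

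First I would prove the key assertion: \emph{if $x\in\cat A$ is indecomposable with $x\notin\cat B$, then $x$ remains indecomposable in $\cat A/\cat B$, and any endomorphism of $x$ that becomes $\id_x$ in $\cat A/\cat B$ is already an automorphism of $x$.} Writing $I_{\cat B}(x)$ for the two-sided ideal of $\End_{\cat A}(x)$ consisting of endomorphisms that factor through an object of $\cat B$, \Cref{Rec:add-quotient} gives $\End_{\cat A/\cat B}(x)=\End_{\cat A}(x)/I_{\cat B}(x)$. Since $x\notin\cat B$, the identity $\id_x$ does not factor through $\cat B$, i.e.\ $\id_x\notin I_{\cat B}(x)$; hence $I_{\cat B}(x)$ is a \emph{proper} two-sided ideal of the local ring $\End_{\cat A}(x)$, so it is contained in the Jacobson radical $\mathfrak m_x$ (the non-units). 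Therefore $\End_{\cat A/\cat B}(x)$ is again a nonzero local ring, so $x$ is indecomposable in $\cat A/\cat B$; and an endomorphism $\equiv\id_x\pmod{I_{\cat B}(x)}$ is then $\equiv\id_x\pmod{\mathfrak m_x}$, hence a unit.

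The bijection follows quickly. For surjectivity, represent an indecomposable of $\cat A/\cat B$ by some $x\in\cat A$ and take a Krull-Schmidt decomposition $x\simeq x_1\oplus\cdots\oplus x_r$ into indecomposables of $\cat A$; in $\cat A/\cat B$ each $x_i$ becomes either zero (precisely when $x_i\in\cat B$) or indecomposable by the key step, and since $x\simeq x_1\oplus\cdots\oplus x_r$ is indecomposable and nonzero in $\cat A/\cat B$, exactly one $x_i$ survives, which is the required indecomposable of $\cat A$ outside $\cat B$ mapping to the given object. For injectivity, if $x,y\in\cat A$ are indecomposable, neither in $\cat B$, and $x\cong y$ in $\cat A/\cat B$, lift mutually inverse isomorphisms to $\alpha\colon x\to y$ and $\beta\colon y\to x$ in $\cat A$; then $\beta\alpha$ becomes $\id_x$ in $\cat A/\cat B$, so by the key step $\beta\alpha$ is an automorphism of $x$, whence $x$ is a retract — hence, $\cat A$ being idempotent-complete, a direct summand — of the indecomposable $y$, and since $x\not\simeq 0$ we conclude $x\simeq y$ in $\cat A$. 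Combined with the key step (which makes the correspondence well defined, sending indecomposables outside $\cat B$ to indecomposables), this establishes the claimed bijection.

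Finally, suppose $x$ has no summand in $\cat B$ and $x$ is indecomposable in $\cat A/\cat B$. Then $x\notin\cat B$, so $x\not\simeq 0$; in a Krull-Schmidt decomposition $x\simeq x_1\oplus\cdots\oplus x_r$ with $r\ge 1$, each $x_i$ is a summand of $x$, hence not in $\cat B$, hence indecomposable and nonzero in $\cat A/\cat B$ by the key step; since $x\simeq x_1\oplus\cdots\oplus x_r$ is indecomposable in $\cat A/\cat B$, this forces $r=1$, so $x\simeq x_1$ is indecomposable in $\cat A$. The argument is essentially routine bookkeeping with Krull-Schmidt decompositions; the only point needing genuine attention — and the one place where closure of $\cat B$ under retracts is used — is the implication ``$x\notin\cat B\Rightarrow I_{\cat B}(x)\subseteq\mathfrak m_x$'' in the key step, which amounts to the elementary fact that a proper ideal (even one-sided) of a local ring consists of non-units.
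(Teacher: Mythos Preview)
Your proof is correct. The overall strategy matches the paper's---both rely on the fact (your ``key step'', the paper's \Cref{Rem:KS-quotients}) that an indecomposable $x\notin\cat B$ has local endomorphism ring in the quotient, since $\End_{\cat A/\cat B}(x)$ is a quotient of the local ring $\End_{\cat A}(x)$ by a proper ideal. The one genuine difference is in the injectivity argument. The paper invokes \Cref{Rem:retract-in-quotient}: from $x\cong y$ in $\cat A/\cat B$ it extracts $x\le y\oplus b$ in~$\cat A$ for some $b\in\cat B$, and then Krull-Schmidt \emph{uniqueness} forces $x\simeq y$ since $x\notin\cat B$. You instead lift the mutual inverses and use $I_{\cat B}(x)\subseteq\mathfrak m_x$ to see that $\beta\alpha$ is already a unit in~$\cat A$, exhibiting $x$ directly as a retract of the indecomposable~$y$. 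Your route is a little more self-contained (it does not need the separate observation of \Cref{Rem:retract-in-quotient}), while the paper's route is terser because it can quote that remark; neither has a real advantage in generality.
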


\begin{proof}
By \Cref{Rem:KS-quotients}, it suffices to show that indecomposables $x,y\in\cat{A}\smallsetminus\cat{B}$ that become isomorphic in~$\cat{A}/\cat{B}$ are already isomorphic in~$\cat{A}$. \Cref{Rem:retract-in-quotient} implies that $x$ is a retract of $y\oplus b$ for some $b\in \cat B$. As $x\not\in\cat B$, Krull-Schmidt forces $x\simeq y$.
\end{proof}

\begin{Not}
\label{Not:E-free}%
Let $\MM\colon \GG^\op\to\ADD$ as in \Cref{Hyp:KS}. For every faithful morphism $k\colon E\into H$ and every object $m\in \MM(H)$, we \emph{choose} a decomposition
\begin{equation}
\label{eq:KS-decomp}%
\varphi_m\colon m \isoto p'_E(m) \oplus p_E(m)
\end{equation}
with $p_E(m)\in \MM(H;E)$ and with $p'_E(m)$ having no indecomposable summand in~$\MM(H;E)$. This decomposition exists by simply regrouping indecomposable summands of~$m$, depending on whether they belong to~$\MM(H;E)$. We call $p_E(m)$ the \emph{$E$-part} of~$m$ and $p'_E(m)$ the \emph{$E$-free part} of~$m$.
For any map $\alpha \colon m\to n$ we write
\begin{equation}
\label{eq:KS-decomp-maps}%
p'_E(\alpha)\colon p'_E(m)\to p'_E(n)
\end{equation}
for the diagonal component of $\alpha$ on the $E$-free parts of its source and target.
\end{Not}

\begin{Rem}
\label{Rem:E-free}%
Of course, the above choice~\eqref{eq:KS-decomp} is non-canonical and \eqref{eq:KS-decomp-maps} is not functorial. However, we are going to see that this issue disappears on suitable additive subquotients.
\end{Rem}

\begin{Prop}
\label{Prop:second-iso-thm-inverse}%
Consider the canonical equivalence of \Cref{Cor:second-iso-thm}
\begin{equation}
\label{eq:second-iso-thm-repeated}%
\frac{\MM(H;D)}{\MM(H;\isoc DEH)}\isotoo\frac{\MM(H;D\sqcup E)}{\MM(H;E)}
\end{equation}
For $j\colon D\into H$ and $k\colon E\into H$. Then the $E$-free part assignments $m \mapsto p'_E(m)$ and $\alpha \mapsto p'_E(\alpha)$ of~\Cref{Not:E-free} induce a well-defined quasi-inverse to~\eqref{eq:second-iso-thm-repeated}
\begin{equation}
\label{eq:second-iso-thm-inverse}%
p'_E\colon \frac{\MM(H;D\sqcup E)}{\MM(H;E)} \isotoo \frac{\MM(H;D)}{\MM(H;\isoc DEH)}.
\end{equation}
\end{Prop}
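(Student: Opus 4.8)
The plan is to check by hand that the (non-canonical, non-functorial) assignments $m\mapsto p'_E(m)$ and $\alpha\mapsto p'_E(\alpha)$ of \Cref{Not:E-free} descend to a well-defined functor on the quotients, and then that this functor is quasi-inverse to the identity-on-objects-and-morphisms equivalence~\eqref{eq:second-iso-thm-repeated} of \Cref{Cor:second-iso-thm}. (As that functor is already known to be an equivalence, it would suffice to exhibit a functor $p'_E$ together with an isomorphism between $p'_E$ composed with~\eqref{eq:second-iso-thm-repeated} and the identity; but both composites will come out equally easily.)

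\textbf{Step 1: the $E$-free part of a $(D\sqcup E)$-object is a $D$-object.} I would first show $p'_E(m)\in\MM(H;D)$ for every $m\in\MM(H;D\sqcup E)$. As $\MM(H)$ is Krull-Schmidt it is idempotent-complete (\Cref{Rec:KS}), so $m$ is an honest direct summand of some $j_*(x)\oplus k_*(y)$ with $x\in\MM(D)$, $y\in\MM(E)$. By the Krull-Schmidt decomposition property, every indecomposable summand of~$m$ — hence every indecomposable summand of its direct summand $p'_E(m)$ — is isomorphic to a summand of $j_*(x)$ or of $k_*(y)$; a summand of $k_*(y)$ lies in $\MM(H;E)$, which is excluded for $p'_E(m)$ by construction, so every indecomposable summand of $p'_E(m)$ is a summand of $j_*(x)$, giving $p'_E(m)\in\add(j_*(\MM(D)))=\MM(H;D)$. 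This makes $p'_E(m)$ a legitimate object of $\MM(H;D)/\MM(H;\isoc DEH)$ and, crucially, makes \Cref{Lem:factor} applicable with its source-object taken to be $p'_E(m)$.

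\textbf{Step 2: $p'_E$ is a well-defined functor.} The raw morphism rule $\alpha\mapsto p'_E(\alpha)$ is additive, reading off the top-left block of the matrix of $\alpha$ relative to the fixed decompositions $\varphi_m$, $\varphi_n$. If $\alpha\colon m\to n$ factors through an $E$-object, then so does its block $p'_E(\alpha)\colon p'_E(m)\to p'_E(n)$; since $p'_E(m)$ is a $D$-object by Step~1, \Cref{Lem:factor} promotes this to a factorization through a $\isoc DEH$-object, so $p'_E(\alpha)=0$ in $\MM(H;D)/\MM(H;\isoc DEH)$ and the rule is well defined on morphism classes. For functoriality, the only point is composition: for $\alpha\colon m\to n$ and $\beta\colon n\to l$ one has $p'_E(\beta\alpha)=p'_E(\beta)\,p'_E(\alpha)+(\beta\text{-block }p_E(n)\to p'_E(l))\circ(\alpha\text{-block }p'_E(m)\to p_E(n))$, and the correction term factors through $p_E(n)\in\MM(H;E)$, hence through a $\isoc DEH$-object by \Cref{Lem:factor} again; identities are preserved on the nose. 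Thus $p'_E$ as in~\eqref{eq:second-iso-thm-inverse} is a functor.

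\textbf{Step 3: quasi-inverse, and the main obstacle.} Since~\eqref{eq:second-iso-thm-repeated} is the identity on objects and morphisms, both composites carry $m$ to $p'_E(m)$, and the chosen isomorphism $\varphi_m\colon m\isoto p'_E(m)\oplus p_E(m)$ furnishes the candidate comparison with the identity: one only needs $p_E(m)$ to vanish in each quotient. In $\MM(H;D\sqcup E)/\MM(H;E)$ this is immediate as $p_E(m)\in\MM(H;E)$. In $\MM(H;D)/\MM(H;\isoc DEH)$, when $m\in\MM(H;D)$ its summand $p_E(m)$ lies in $\MM(H;D)\cap\MM(H;E)$, hence in $\MM(H;\isoc DEH)$ by \Cref{Cor:filtered}, so it is again zero. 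Naturality of $m\mapsto\varphi_m$ modulo these subcategories is automatic: in each quotient the matrix of any $\alpha$ relative to $\varphi_m,\varphi_n$ loses its off-diagonal entries (they factor through the $E$- resp.\ $\isoc DEH$-objects $p_E(m)$, $p_E(n)$), leaving exactly $p'_E(\alpha)$. This yields the two required isomorphisms of composites and finishes the proof. The only real difficulty is the bookkeeping forced by the non-canonical, non-functorial choices in \Cref{Not:E-free}: one must verify that every defect is precisely healed after passing to quotients, and the single technical hinge making all the healings work is Step~1 combined with \Cref{Lem:factor} — the upgrade ``factors through an $E$-object'' $\rightsquigarrow$ ``factors through a $\isoc DEH$-object'' is available only because the sources $p'_E(m)$ are $D$-objects, which is exactly where the Krull-Schmidt hypothesis \Cref{Hyp:KS} is used.
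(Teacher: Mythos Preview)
Your proposal is correct and follows essentially the same route as the paper: show $p'_E(m)\in\MM(H;D)$ via Krull-Schmidt, use \Cref{Lem:factor} to kill the off-diagonal error terms so that $p'_E$ becomes a functor on the quotients, and then use $[\varphi_m]$ as the natural isomorphism. The only minor difference is that the paper verifies just one of the two composites (which suffices since~\eqref{eq:second-iso-thm-repeated} is already known to be an equivalence), whereas you check both, invoking \Cref{Cor:filtered} for the second one; this is harmless extra work rather than a different approach.
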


\begin{proof}
Since Krull-Schmidt implies idempotent-complete, \eqref{eq:second-iso-thm-repeated} follows from~\eqref{eq:second-iso-thm}. We claim that our chosen $p'_E$ as in \Cref{Not:E-free} yields a well-defined functor
\begin{equation}
\label{eq:pre-eq-E-free}%
p'_E\colon \MM(H;D\sqcup E) \too \frac{\MM(H;D)}{\MM(H; \isoc DEH)} \,.
\end{equation}
Let $m\in \MM(H;D\sqcup E)$, say $m\le j_*(v)\oplus k_*(w)$. For every indecomposable $x\le m$, we must have $x\le j_*(v)\in\MM(H;D)$ or $x\le k_*(w)\in\MM(H;E)$. By definition of the $E$-free part of~$m$, we get $p'_E(m)\in \MM(H;D)$. So \eqref{eq:pre-eq-E-free} makes sense on objects and we define it as $\alpha\mapsto [p'_E(\alpha)]$ on maps. Identity maps are sent to identities. For a composite $m \xrightarrow{\alpha} n \xrightarrow{\beta} q$ in $\MM(H;D\sqcup E)$, we have a commutative diagram
\[
\xymatrix@C=6em@R=2em{
m \ar[d]_-{\varphi_m}^-{\simeq} \ar[r]^-{\alpha}
& n \ar[d]_-{\varphi_n}^-{\simeq} \ar[r]^-{\beta}
& q \ar[d]_-{\varphi_q}^-{\simeq}
\\
p'_E(m)\oplus p_E(m) \ar[r]^-{\smat{p'_E(\alpha) & a_1\\a_2& a_3}}
&
p'_E(n)\oplus p_E(n) \ar[r]^-{\smat{p'_E(\beta) & b_1\\b_2& b_3}}
&
p'_E(q)\oplus p_E(q)
}
\]
and therefore $p'_E(\beta\alpha)=p'_E(\beta)p'_E(\alpha)+b_1 a_2$. The `error' $b_1 a_2$ factors via $p_E(n)$ and its source $p'_E(m)$ is a $D$-object by the above discussion. By \Cref{Lem:factor}, this error $b_1a_2$ factors via a $\isoc DEH$-object, hence disappears in $\frac{\MM(H;D)}{\MM(H; \isoc DEH)}$. So~\eqref{eq:pre-eq-E-free} is a well-defined functor. \Cref{Lem:factor} also guarantees that the functor~\eqref{eq:pre-eq-E-free} descends to a well-defined functor $p'_E$ as in~\eqref{eq:second-iso-thm-inverse}.
It is then easy to see that this $p'_E$ followed by the canonical equivalence~\eqref{eq:second-iso-thm-repeated} is isomorphic to the identity. The isomorphism is given by $[\varphi_m]\colon m\isoto p'_E(m)\oplus p_E(m)\cong p'_E(m)$ where the latter projection is an isomorphism modulo~$\MM(H;E)$ since $p_E(m)\in\MM(H;E)$. The naturality of this transformation~$[\varphi_m]$ in~$m$ comes from the very definition of~$p'_E(\alpha)$.
\end{proof}

\begin{Cor}[The Green equivalence, Krull-Schmidt case] \label{Cor:Green-eq-KS}
Let $D\ointo{j} H\ointo{i} G$ and let $\MM$ be a Mackey 2-functor taking values in Krull-Schmidt categories.
Then $i_*\colon \MM(H)\to \MM(G)$ induces a well-defined equivalence of the subquotient categories
\begin{equation*}
i_*\colon \frac{\MM(H;D)}{\MM(H;\partial_i(D,D))} \isotoo \frac{\MM(G;D)}{\MM(G;\partial_i(D,D))}
\end{equation*}
isomorphic to $p'_{\partial_i(D,D)}\circ i_*$, namely the $\partial_i(D,D)$-free part (as in \Cref{Not:E-free}) of induction. Its quasi-inverse is given by the $\partial_i(H,D)$-free part of restriction~$i^*$
\[
p'_{\partial_i(H,D)} \circ i^* \colon  \frac{\MM(G;D)}{\MM(G;\partial_i(D,D))} \isotoo \frac{\MM(H;D)}{\MM(H;\partial_i(D,D))}  \,.
\]
\end{Cor}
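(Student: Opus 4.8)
The plan is to reduce the whole statement to the general Green equivalence of \Cref{Thm:Green-equivalence}, using Krull--Schmidt only to strip off the idempotent completions and to make the chosen ``$E$-free part'' assignments of \Cref{Not:E-free} available. First I would invoke \Cref{Rem:KS-quotients}: under \Cref{Hyp:KS} the additive quotients $\MM(H;D)/\MM(H;\partial_i(D,D))$ and $\MM(G;D)/\MM(G;\partial_i(D,D))$ are themselves Krull--Schmidt, hence idempotent-complete, so the completions decorating \eqref{eq:Green-i_*} are redundant and $i_*$ is \emph{on the nose} an equivalence between these two quotients (well-defined on the quotients by \Cref{Prop:D-obj-i_*}, as in the proof of \Cref{Thm:Green-equivalence}). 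Identifying this equivalence with $p'_{\partial_i(D,D)}\circ i_*$ is then formal: for $m\in\MM(H;D)$ the chosen splitting $\varphi_{i_*(m)}\colon i_*(m)\isoto p'_{\partial_i(D,D)}(i_*(m))\oplus p_{\partial_i(D,D)}(i_*(m))$ of \Cref{Not:E-free} becomes, after killing the $\partial_i(D,D)$-object $p_{\partial_i(D,D)}(i_*(m))$, a natural isomorphism $i_*\isoTo p'_{\partial_i(D,D)}\circ i_*$ in the target quotient --- exactly as in the last paragraph of the proof of \Cref{Prop:second-iso-thm-inverse}.

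The real content is the explicit quasi-inverse, and here I would lean on the final sentence of \Cref{Thm:Green-equivalence}: the composite $i^*\circ i_*$, with $i^*$ the equivalence \eqref{eq:Green-i^*}, is isomorphic to the canonical functor $\Phi\colon \tfrac{\MM(H;D)}{\MM(H;\partial_i(D,D))}\to \tfrac{\MM(H;\isoc HDG)}{\MM(H;\partial_i(H,D))}$ of \Cref{Prop:proto-Green}. But the proof of \Cref{Prop:proto-Green} shows $\Phi$ is nothing other than the ``Second Isomorphism'' functor \eqref{eq:second-iso-thm} of \Cref{Cor:second-iso-thm} for the decomposition $\isoc HDG\cong D\sqcup\partial_i(H,D)$ of \Cref{Exas:partial-HD}\,\eqref{Exa:partial-HD}, with the denominator $\isoc D{\partial_i(H,D)}H$ rewritten as $\partial_i(D,D)$ by \Cref{Lem:geography}; in other words $\Phi$ is precisely the equivalence \eqref{eq:second-iso-thm-repeated} of \Cref{Prop:second-iso-thm-inverse} with $E=\partial_i(H,D)$. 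That proposition then hands us a quasi-inverse of $\Phi$ in the shape of the $\partial_i(H,D)$-free part functor $p'_{\partial_i(H,D)}$, so $p'_{\partial_i(H,D)}\circ\Phi\cong\Id$.

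Putting the two together, $p'_{\partial_i(H,D)}\circ i^*\circ i_*\cong p'_{\partial_i(H,D)}\circ\Phi\cong\Id$; since $i_*$ is an equivalence, this forces $p'_{\partial_i(H,D)}\circ i^*$ to be a quasi-inverse of $i_*$ up to isomorphism, which is the claim. The one routine point to verify along the way is that $p'_{\partial_i(H,D)}\circ i^*$ genuinely descends to a functor out of $\tfrac{\MM(G;D)}{\MM(G;\partial_i(D,D))}$: this is where one uses the assertion of \Cref{Thm:Green-equivalence} that $i^*$ carries $\MM(G;\partial_i(D,D))$ into $\MM(H;\partial_i(H,D))$, which $p'_{\partial_i(H,D)}$ then annihilates by its very definition. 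I expect the only friction in the argument to be precisely this kind of bookkeeping --- keeping the roles of $E=\partial_i(H,D)$, of the isocomma identity $\isoc D{\partial_i(H,D)}H\cong\partial_i(D,D)$, and of the various numerators and denominators aligned across \Cref{Thm:Green-equivalence}, \Cref{Prop:proto-Green}, \Cref{Cor:second-iso-thm}, \Cref{Prop:second-iso-thm-inverse} and \Cref{Lem:geography} --- once these match up, the corollary is essentially formal.
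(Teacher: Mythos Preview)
Your proposal is correct and follows essentially the same route as the paper's own proof: both combine the three equivalences $i_*$, $i^*$ and $p'_{\partial_i(H,D)}$ from \Cref{Thm:Green-equivalence} and \Cref{Prop:second-iso-thm-inverse} (with $E=\partial_i(H,D)$, using $\isoc HDG\cong D\sqcup\partial_i(H,D)$ and $\isoc{D}{\partial_i(H,D)}{H}\cong\partial_i(D,D)$), observe that their composite is isomorphic to the identity, and conclude that $p'_{\partial_i(H,D)}\circ i^*$ is quasi-inverse to~$i_*$. Your write-up is slightly more explicit than the paper's about why the idempotent completions can be dropped (via \Cref{Rem:KS-quotients}) and about the bookkeeping aligning $\Phi$ with \eqref{eq:second-iso-thm-repeated}, but the argument is the same.
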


\begin{proof}
We have three equivalences, by \eqref{eq:Green-i_*}, \eqref{eq:Green-i^*} and~\eqref{eq:second-iso-thm-inverse}
\[
\xymatrix@C=1.5em{
\Displ \frac{\MM(H;D)}{\MM(H;\partial_i(D,D))} \ar[r]^-{i_*}_-{\cong}
& \Displ \frac{\MM(G;D)}{\MM(G;\partial_i(D,D))} \ar[r]^-{i^*}_-{\cong}
& \Displ \frac{\MM(H;\isoc{H}{D}{G})}{\MM(H;\partial_i(H,D))} \ar[r]^-{p'_E}_-{\cong}
& \Displ \frac{\MM(H;D)}{\MM(H;\partial_i(D,D))}
}
\]
where the latter groupoid $E$ is~$\partial_i(H,D)$, using that $\isoc HDG\cong D\sqcup\partial_i(H,D)$ and $\isoc{D}{\partial_i(H,D)}{H}\cong \partial_i(D,D)$ together with \Cref{Prop:second-iso-thm-inverse}. That proposition also guarantees that the composition of these three equivalences is isomorphic to the identity. This shows that $p'_E\circ i^*$ is a quasi-inverse to~$i_*$. Also the functor~$i_*$ is clearly isomorphic to $p'_{\partial_i(D,D)}\circ i_*$, since $p_{\partial_i(D,D)}(i_*(n))\cong 0$ modulo~$\MM(G;\partial_i(D,D))$.
\end{proof}

We deduce the usual result for indecomposable objects `upstairs'.

\begin{Cor}[The Green correspondence]
\label{Cor:Green-corr}%
Let $D\ointo{j} H\ointo{i} G$ be faithful functors of finite groupoids, and let $\MM$ be a Mackey 2-functor taking values in Krull-Schmidt categories.
Then there exists a bijection between the isomorphism classes of indecomposable objects $m\in \MM(G;D)$ not belonging to~$\MM(G;\partial_i(D,D))$ and the isomorphism classes of indecomposable objects $n \in \MM(H;D)$ not belonging to~$\MM(H;\partial_i(D,D))$. It maps an indecomposable $m\in \MM(G;D)$ to the $\partial_i(H,D)$-free part $p'_{\partial_i(H,D)} (i^*(m))$ of its restriction and an idecomposable $n$ to the $\partial_i(D,D)$-free part $p'_{\partial_i(D,D)} (i_*(n))$ of its induction.
Alternatively: $m$ and $n$ correspond to each other if and only if either, and hence both, of the following two conditions hold:
\[
m\leq i_*(n) \quad \textrm{ and } \quad n \leq i^*(m)\,.
\]
\end{Cor}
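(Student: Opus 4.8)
The plan is to deduce the Green correspondence formally from the Green equivalence of \Cref{Cor:Green-eq-KS}, together with the dictionary of \Cref{Lem:quot-indec} relating indecomposables of a Krull--Schmidt category to those of its additive quotients.

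First I would produce the bijection abstractly. Under \Cref{Hyp:KS} the categories $\MM(G;D)$ and $\MM(H;D)$ are Krull--Schmidt (\Cref{Rem:KS-quotients}), while $\MM(G;\partial_i(D,D))$ and $\MM(H;\partial_i(D,D))$ are retract-closed, being of the form $\add(-)$. So \Cref{Lem:quot-indec} gives a bijection between the isomorphism classes of indecomposable objects of $\MM(G;D)$ not lying in $\MM(G;\partial_i(D,D))$ and those of the quotient $\MM(G;D)/\MM(G;\partial_i(D,D))$, and likewise over $H$. Since by \Cref{Cor:Green-eq-KS} induction $i_*$ is an equivalence $\MM(H;D)/\MM(H;\partial_i(D,D))\isotoo \MM(G;D)/\MM(G;\partial_i(D,D))$, it carries indecomposables to indecomposables, hence induces a bijection between the isomorphism classes of indecomposables of the two quotients. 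Composing the three bijections yields the asserted correspondence.

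Next I would identify this bijection with the explicit formulas. By \Cref{Cor:Green-eq-KS} the equivalence induced by $i_*$ is isomorphic to the $\partial_i(D,D)$-free part of induction. Thus, for an indecomposable $n\in\MM(H;D)$ with no summand in $\MM(H;\partial_i(D,D))$, the object $p'_{\partial_i(D,D)}(i_*(n))$ is a direct summand of $i_*(n)\in\MM(G;D)$, hence lies in $\MM(G;D)$; it has no indecomposable summand in $\MM(G;\partial_i(D,D))$ by construction (\Cref{Not:E-free}); and it represents the image of $[n]$ under the equivalence. The second clause of \Cref{Lem:quot-indec} then shows it is indecomposable in $\MM(G;D)$, so it \emph{is} the object $m$ the bijection assigns to $n$. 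Symmetrically, using the quasi-inverse $p'_{\partial_i(H,D)}\circ i^*$, the decomposition $\isoc HDG\cong D\sqcup\partial_i(H,D)$ of \Cref{Exas:partial-HD}, and the argument in the proof of \Cref{Prop:second-iso-thm-inverse}, the object $p'_{\partial_i(H,D)}(i^*(m))$ lies in $\MM(H;D)$, has no summand in $\MM(H;\partial_i(H,D))$ and hence none in the smaller $\MM(H;\partial_i(D,D))$ (the inclusion $\MM(H;\partial_i(D,D))\subseteq\MM(H;\partial_i(H,D))$ holding by \Cref{Rem:partial-trivial} via the faithful $\partial_i(j,D)\colon\partial_i(D,D)\into\partial_i(H,D)$), and is indecomposable in $\MM(H;D)$; so it is the object corresponding to $m$.

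Finally I would check the retract characterization and flag the only delicate point. If $m$ and $n$ correspond, then $i_*(n)\simeq p'_{\partial_i(D,D)}(i_*(n))\oplus p_{\partial_i(D,D)}(i_*(n))\simeq m\oplus p_{\partial_i(D,D)}(i_*(n))$ gives $m\leq i_*(n)$, and likewise $n\simeq p'_{\partial_i(H,D)}(i^*(m))\leq i^*(m)$. Conversely, if merely $m\leq i_*(n)$ with $m$ indecomposable in $\MM(G;D)\smallsetminus\MM(G;\partial_i(D,D))$ and $n$ indecomposable in $\MM(H;D)\smallsetminus\MM(H;\partial_i(D,D))$, then in $\MM(G;D)/\MM(G;\partial_i(D,D))$ the class $[m]$ is nonzero (as $\MM(G;\partial_i(D,D))$ is retract-closed) and is a retract of $i_*([n])$, which is indecomposable there since $[n]$ is indecomposable upstairs and $i_*$ is an equivalence; Krull--Schmidt then forces $[m]\simeq i_*([n])$, so $m$ corresponds to $n$. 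The case $n\leq i^*(m)$ is handled the same way using the quasi-inverse. The main obstacle, such as it is, is not conceptual but lies in the second step: one must check carefully that the two free-part formulas land in $\MM(G;D)$, resp.\ $\MM(H;D)$, and carry no indecomposable summand in the $\partial_i(D,D)$-subcategory, since this is exactly what allows the second clause of \Cref{Lem:quot-indec} to convert indecomposability in the quotient into honest indecomposability upstairs.
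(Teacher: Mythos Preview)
Your proposal is correct and follows exactly the paper's strategy: compose the two applications of \Cref{Lem:quot-indec} (for $\MM(H;D)\supseteq\MM(H;\partial_i(D,D))$ and $\MM(G;D)\supseteq\MM(G;\partial_i(D,D))$) with the equivalence of \Cref{Cor:Green-eq-KS}, and deduce the retract reformulation from \Cref{Rem:retract-in-quotient} and Krull--Schmidt. Your write-up is in fact more explicit than the paper's three-line proof; the only spot to tighten is the converse for $n\le i^*(m)$, where ``the same way using the quasi-inverse'' tacitly uses that an indecomposable $n\in\MM(H;D)$ lying in $\MM(H;\partial_i(H,D))$ would already lie in $\MM(H;\partial_i(D,D))$ (by \Cref{Cor:filtered} and \Cref{Lem:geography}), so that $n$ must sit inside $p'_{\partial_i(H,D)}(i^*(m))$.
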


\begin{proof}
Chase indecomposable objects under the three bijections given by:
\begin{enumerate}[(1)]
\item
\Cref{Lem:quot-indec} for $\cat{A}=\MM(H;D)$ and $\cat{B}=\MM(H;\partial_i(D,D))$;
\item
The equivalence of~\eqref{Cor:Green-eq-KS};
\item
\Cref{Lem:quot-indec} for $\cat{A}=\MM(G;D)$ and $\cat{B}=\MM(G;\partial_i(D,D))$.
\end{enumerate}
The final reformulation follows easily from \Cref{Rem:retract-in-quotient} and \Cref{Lem:quot-indec} again.
\end{proof}

\begin{Rem} \label{Rem:normalizer-hyp}
The original version of the Green equivalence in \cite[Theorem~4.1]{Green72} is for the Mackey 2-functor of modular representations (see~\Cref{Exa:Green-corr-modrep}\,\eqref{it:mod} below) and is expressed in terms of finite groups $D\leq H \leq G$, as in \Cref{Rem:groupist-translation}. It also has the additional hypothesis that $H$ contains the normalizer~$N_G(D)$.
The latter guarantees that $D\cap {}^gD\lneq D$ as long as $g\notin H$, so that (as soon as $H\lneq G$)  the category
$
\MM(H;D)/\MM(H;\partial_i(D,D)) = \MM(H;D)/\MM(H; \sqcup_{[g], g\not\in H} D\cap{}^gD)
$
is non-zero, and similarly for~$G$, so that the Green equivalence is not an empty statement.
\end{Rem}

To conclude this section, still working under the Krull-Schmidt hypothesis, we further refine the Green correspondence by introducing vertices of objects.

\begin{Rem}[Vertices and sources] \label{Rem:vertices-and-sources}
Just as in modular representation theory, we can define the vertex and the source of an indecomposable object $m\in \MM(G)$ for any Krull-Schmidt Mackey 2-functor~$\MM$. First, by the additivity of~$\MM$ and the fact that $m$ is indecomposable, we may always arrange for $G$ to be a group.
Then we may define a \emph{vertex} of $m$ to be a subgroup $i\colon D\hookrightarrow G$ such that $m\in \MM(G;D)$ and which, among such subgroups of~$G$, is minimal with respect to inclusion. A \emph{source} of $m$ is then an indecomposable object $s$ of $\MM(G;D)$ such that $m\leq i_*(s)$.
One can prove, precisely as in the proof of \cite[Theorem~9.4]{Alperin80}, that every $m$ admits a vertex and a source, that any two vertices of $m$ are $G$-conjugate subgroups, and that any two sources of $m$ (for the same vertex) are isomorphic. (In fact, if $D_1,D_2$ are two vertices of $m$ and $s_1\in \MM(D_1)$ and $s_2\in \MM(D_2)$ two sources, then there exists an element $g\in G$ such that ${}^gD_1= D_2$ and $c^*_g(s_2)\simeq s_1$, where $c_g^*\colon \MM(S_2)\overset{\sim}{\to} \MM(S_1)$ is the isomorphism of categories induced by conjugation $c_g\colon D_1 \overset{\sim}{\to} D_2$, $x\mapsto {}^gx$.)
\end{Rem}

\begin{Prop} [Green correspondents have same vertex]
\label{Prop:vertices-match}%
Let $D\leq H \leq G$ be finite groups.
If the indecomposable objects $m\in \MM(G;D)\smallsetminus \MM(G;\partial_i(D,D))$ and $n\in \MM(H;D) \smallsetminus \MM(H;\partial_i(D,D))$ are matched by the Green correspondence  of \Cref{Cor:Green-corr}, then their vertices, defined as in \Cref{Rem:normalizer-hyp}, are $G$-conjugate (\ie are `the same'). This vertex is necessarily ($G$-conjugate to) a subgroup $P\leq D$ which is not $G$-subconjugate to any subgroup of the form $D\cap {}^gD$ with $g\in G\smallsetminus H$.
\end{Prop}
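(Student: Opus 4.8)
The plan is to follow the classical route (the proof of \cite[Theorem~9.4]{Alperin80}), replacing its representation-theoretic inputs by the abstract machinery of \Cref{sec:Green} together with Krull-Schmidt. The workhorse will be the following ``vertex criterion'', which I would establish first: for $G$ a group, an indecomposable $x\in\MM(G)$ and a subgroup $K\leq G$, one has $x\in\MM(G;K)$ if and only if a (equivalently, the) vertex~$P$ of~$x$ is $G$-subconjugate to~$K$. The ``if'' direction is immediate from \Cref{Rem:partial-trivial}: conjugation is an inner isomorphism so $\MM(G;P)=\MM(G;{}^gP)$, and $\MM(G;-)$ is monotone. For ``only if'', from $x\in\MM(G;K)\cap\MM(G;P)$ we get $x\in\MM(G;\isoc{K}{P}{G})$ by \Cref{Cor:filtered}; translating the isocomma into the coproduct of groups $K\cap{}^gP$ as in \Cref{Exa:groupist-translation} and using that each $\MM(G;-)$ is retract-closed, the indecomposability of~$x$ and Krull-Schmidt force $x\in\MM(G;K\cap{}^{g}P)$ for a single~$g$; minimality of the vertex then gives ${}^{g}P=K\cap{}^{g}P\leq K$, that is, $P$ is $G$-subconjugate to~$K$. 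The same criterion holds verbatim over~$H$.

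Granting this, I would first arrange that $n$ has a vertex $P\leq D$: since $n$ is indecomposable in~$\MM(H)$ (by \Cref{Lem:quot-indec}) and lies in $\MM(H;D)$, the criterion with $K=D$ shows some vertex of~$n$ is $H$-subconjugate to~$D$, hence can be taken inside~$D$. Now I claim the vertex of~$m$ is $G$-conjugate to~$P$. By \Cref{Cor:Green-corr} we have $m\leq i_*(n)$ and $n\leq i^*(m)$. From $n\in\MM(H;P)$ and \Cref{Prop:D-obj-i_*} we get $m\in\MM(G;P)$, so by the criterion the vertex of~$m$ is $G$-subconjugate to~$P$. Conversely, if $Q$ denotes a vertex of~$m$, then \Cref{Prop:D-obj-i^*} together with the Mackey decomposition $\isoc{H}{Q}{G}\simeq\coprod_{[g]\in\doublequot{H}{G}{Q}}H\cap{}^gQ$ gives $n\in\MM(H;\isoc{H}{Q}{G})$, so Krull-Schmidt (exactly as in the criterion) yields $n\in\MM(H;H\cap{}^gQ)$ for some~$g$, whence $P$ is $G$-subconjugate to $H\cap{}^gQ\leq{}^gQ$, hence to~$Q$. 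As two finite subgroups that are mutually $G$-subconjugate are $G$-conjugate, $Q$ is $G$-conjugate to~$P$.

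It then remains to show $P$ is not $G$-subconjugate to any $D\cap{}^{g_0}D$ with $g_0\in G\smallsetminus H$. Suppose it were. Since $m\in\MM(G;P)$, the ``if'' direction of the criterion (applied to~$m$ with $K=D\cap{}^{g_0}D$) gives $m\in\MM(G;D\cap{}^{g_0}D)$. But by \Cref{Rem:groupist-translation} the groupoid $\partial_i(D,D)$ is equivalent to the coproduct of the groups $D\cap{}^gD$ over the classes $[g]\in\doublequot{D}{G}{D}$ with $g\notin H$, so by additivity of~$\MM$ we have $\MM(G;D\cap{}^{g_0}D)\subseteq\MM(G;\partial_i(D,D))$. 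Thus $m\in\MM(G;\partial_i(D,D))$, contradicting the hypothesis on~$m$, and the argument is complete.

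I expect the main obstacle to be bookkeeping rather than conceptual: carefully distinguishing $G$-subconjugacy from genuine containment, choosing vertex representatives inside~$D$, and using the dictionary of \Cref{Rem:groupist-translation} between $\partial_i(D,D)$ and the family $\mathfrak{X}=\{D\cap{}^gD\mid g\in G\smallsetminus H\}$. The one genuinely load-bearing external ingredient is Krull-Schmidt, invoked each time one passes from membership in an $\add$ of a union of subcategories to membership in a single piece; pinning down the vertex criterion cleanly is where most of the care will go.
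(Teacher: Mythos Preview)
Your proposal is correct and takes essentially the same approach as the paper: both argue that $m\leq i_*(n)$ forces the vertex of~$m$ to be $G$-subconjugate to that of~$n$, while $n\leq i^*(m)$ together with the Mackey decomposition of $\isoc{H}{Q}{G}$ and Krull-Schmidt forces the reverse subconjugacy, so the vertices coincide. Your version is slightly more systematic in that you isolate the ``vertex criterion'' as a standalone lemma and explicitly verify the last sentence of the proposition (that $P$ is not $G$-subconjugate to any $D\cap{}^{g}D$ with $g\notin H$), which the paper leaves implicit in its opening observation.
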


\begin{proof}
Let $i\colon H\rightarrowtail G$ be the inclusion. We are in the situation of \Cref{Rem:groupist-translation}, where there is an equivalence $\partial_i(D,D)\simeq \coprod_{[g]\in \doublequot DGD,\,g\notin H} D\cap {}^gD$. Hence the Green correspondence for $D\leq H \leq G$ concerns only those indecomposable objects in $\MM(G)$ and $\MM(H)$ which are not $D\cap {}^gD$-objects for any $g\in G\smallsetminus H$.

Let $m\in \MM(G)$ and $n\in \MM(H)$ be Green correspondents as in \Cref{Cor:Green-corr}:
\begin{equation} \label{eq:a-b-weak}
m \leq i_*(n)
\quad \textrm{ and } \quad
n \leq i^*(m).
\end{equation}
Let $P$ and $Q$ be vertices of $m$ and of~$n$, respectively.
In particular $n$ is a $Q$-object, hence so is the induced $i_*(n)$ and therefore so is its retract~$m$.
By the uniqueness of the vertex~$P$ of~$m$, this implies that a conjugate of~$P$ is a subgroup of~$Q$.

On the other hand, $n$ is a retract of $i^*(m)$ by \eqref{eq:a-b-weak}. And $m$ is a $P$-object (as $P$ is the vertex of~$m$) hence its restriction $i^*(m)$ is an $\isoc{H}{P}{G}$-object by \Cref{Prop:D-obj-i^*} (applied with $E:=P$).
Because of the equivalence $\Isoc{H}{P}{G} \simeq \coprod_{[g]\in \doublequot H G P} H \cap {}^{g\!}P$, the indecomposable $n$ is already an $H \cap {}^{g\!}P$-object for some $g\in G$.
By the uniqueness of the vertex~$Q$, this shows that a conjugate of $Q$ is a subgroup of~$P$.

It follows that $P$ and $Q$ are conjugate subgroups of~$G$.
\end{proof}

\begin{Cor}
\label{Cor:Green-corr-vertex}%
Let $D\leq H \leq G$ be finite groups such that $N_G(D)\leq H$.
Then the Green correspondence  of \Cref{Cor:Green-corr} restricts to a bijection between indecomposables in $\MM(G)$ with vertex~$D$ and indecomposables in $\MM(H)$ with vertex~$D$.
\end{Cor}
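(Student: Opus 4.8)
The plan is to deduce this immediately from the Green correspondence of \Cref{Cor:Green-corr} together with the vertex-matching statement of \Cref{Prop:vertices-match}; the only new input is the classical consequence of the hypothesis $N_G(D)\leq H$.

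First I would record that, when $N_G(D)\leq H$, every subgroup of the form $D\cap{}^gD$ with $g\in G\smallsetminus H$ is a \emph{proper} subgroup of $D$: indeed $D\cap{}^gD=D$ would give $D={}^gD$ (equal orders), hence $g\in N_G(D)\subseteq H$. Using the identification $\partial_i(D,D)\simeq\coprod_{[g]\in\doublequot DGD,\,g\notin H}D\cap{}^gD$ of \Cref{Rem:groupist-translation} and the Krull-Schmidt property, an \emph{indecomposable} object belonging to $\MM(G;\partial_i(D,D))=\MM\big(G;\sqcup_g D\cap{}^gD\big)$ is already a $D\cap{}^gD$-object for a single $g\notin H$ (this is where Krull-Schmidt, via the filtering of \Cref{Cor:filtered}, enters), hence has vertex $G$-subconjugate to the proper subgroup $D\cap{}^gD\lneq D$, and in particular does not have vertex $D$; the same holds over~$H$. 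Since conversely an indecomposable object with vertex $D$ lies in $\MM(G;D)$, resp.\ in $\MM(H;D)$, by the very definition of a vertex (\Cref{Rem:vertices-and-sources}), it follows that every indecomposable of $\MM(G)$, resp.\ of $\MM(H)$, with vertex $D$ belongs to $\MM(G;D)\smallsetminus\MM(G;\partial_i(D,D))$, resp.\ to $\MM(H;D)\smallsetminus\MM(H;\partial_i(D,D))$, that is, to the source, resp.\ the target, of the Green bijection of \Cref{Cor:Green-corr}.

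It then remains to invoke \Cref{Prop:vertices-match}, which says that this bijection, and likewise its inverse, preserves vertices up to $G$-conjugacy; hence it restricts to the asserted bijection between indecomposables of $\MM(G)$ with vertex $D$ and those of $\MM(H)$ with vertex $D$. The proof is essentially formal once the earlier results are in hand; the one step deserving attention is the Krull-Schmidt reduction in the second paragraph — that membership in $\MM(-;\partial_i(D,D))$ for an indecomposable object forces membership in $\MM(-;D\cap{}^gD)$ for one fixed $g$, so that minimality of the vertex can be applied — together with keeping the conjugacy conventions for vertices consistent on the two sides.
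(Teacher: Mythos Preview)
Your proof is correct and follows essentially the same approach as the paper's own (two-sentence) proof: observe that $N_G(D)\leq H$ forces $|D\cap{}^gD|<|D|$ for $g\notin H$, so no $G$-conjugate of $D$ can sit inside any such intersection, and then invoke \Cref{Prop:vertices-match}. Your write-up is more explicit about why indecomposables with vertex $D$ actually lie in the domain of \Cref{Cor:Green-corr}; one small quibble is that the reduction ``indecomposable $\partial_i(D,D)$-object $\Rightarrow$ $D\cap{}^gD$-object for a single~$g$'' comes from additivity (Mack~1) plus Krull-Schmidt rather than from \Cref{Cor:filtered}.
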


\begin{proof}
Since $H\subseteq N_G(D)$, for $g\in G\smallsetminus H$ the subgroup $D\cap {}^gD$ cannot contain a conjugate of~$D$ because its order is strictly smaller. Hence $D$ is an admissible vertex to which we may restrict the Green correspondence as in~\Cref{Prop:vertices-match}.
\end{proof}

We conclude with a conceptual explanation for why all vertices arising in modular representation theory are necessarily $p$-groups.

\begin{Rem}
\label{Rem:cohomological}%
We say that a (non-necessarily Krull-Schmidt) Mackey 2-functor~$\cat M$ is \emph{cohomological}\footnote{We study cohomological Mackey 2-functors, in the above sense, in a separate article in preparation, where in particular we will justify the terminology `cohomological'.} if for every inclusion $i\colon H\hookrightarrow G$ of a subgroup, the composite
\[
\Id_{\MM(G)}
\overset{\reta\;}{\Longrightarrow}
i_*i^*
\overset{\leps\;}{\Longrightarrow}
\Id_{\MM(G)}
\]
of the unit of the `right' adjunction $i^*\dashv i_*$ followed by the counit of the `left' adjunction $i_*\dashv i^*$, is equal to the natural transformation $[G:H]\cdot \id$.

In that case, for every subgroup $H\leq G$ such that $[G:H]$ is invertible in~$\MM(G)$ (\ie such that $\MM(G)$ is a $\bbZ[\frac{1}{[G:H]}]$-linear category) each $m\in \MM(G)$ is an $H$-object.
For instance, if~$\MM(G)$ is $\bbZ_{(p)}$-linear for some prime number~$p$ (\eg $\bbZ/p\bbZ$-linear), each object $m\in \MM(G)$ is an $H$-object for $H$ a $p$-Sylow subgroup of~$G$. In particular, in the Krull-Schmidt case, if $m$ is indecomposable then its vertex (as in \Cref{Rem:vertices-and-sources}) is a $p$-subgroup of~$G$.

In the extreme case where $\MM$ is $\mathbb Q$-linear and cohomological then $\cat M(G)=\cat M(G;1)$ for all~$G$, where 1 is the trivial group, and all vertices are trivial.
\end{Rem}

%
\section{Examples in algebra, topology and geometry}
\label{sec:examples}%

Since \Cref{Thm:Green-equivalence} can be applied to any Mackey 2-functor~$\MM$, we obtain a Green equivalence theorem for \emph{each} of the many examples of mentioned in \cite[Chapter~4]{BalmerDellAmbrogio18pp}. As illustration, let us detail here a few of the most noteworthy ones, pointing out the Krull-Schmidt cases in which the Green correspondence of \Cref{Cor:Green-corr} also holds, and starting with the classical setting.

\begin{Exa}[Modular representation theory] \label{Exa:Green-corr-modrep}
Let $\kk$ be a field of positive characteristic~$p>0$.
The typical `small' Mackey 2-functors used in modular representation theory over~$\kk$ are $\Hom$-finite over~$\kk$, idempotent-complete and therefore Krull-Schmidt by \Cref{Rec:KS}. So we obtain for them both the Green equivalence and the Green correspondence. These $\MM$ are given as follows, for a group~$G$:
\begin{enumerate} [\rm(a)]
\item
\label{it:mod}%
$\MM(G)= \fgmod(\kk G)$, the category of finite-dimensional $\kk$-linear representations of~$G$, \ie finitely generated left $\kk G$-modules,  and $\kk G$-linear maps.
\item
\label{it:stmod}%
$\MM(G)= \stmod(\kk G):= \frac{\fgmod(\kk G)}{\fgproj(\kk G)}$, the stable module category of~$\kk G$.
\item
\label{it:der}%
$\MM(G)= \Db(\kk G)$, the derived category of bounded complexes in~$\fgmod(\kk G)$.
\end{enumerate}
In each case, the induction and restriction adjunctions are given by the usual functors; see \cite[\S\,4.1-2]{BalmerDellAmbrogio18pp} for more details.
An easy direct calculation shows that these Mackey 2-functors are cohomological as in \Cref{Rem:cohomological}: Hence for them, the vertex of any indecomposable object must be a $p$-group.

This recovers the standard formulation of the Green correspondence, \cf \cite{Alperin80}.
\end{Exa}

\begin{Rem}
Consider the classical situation of finite groups $D \subseteq N_G(D)\subseteq H \subsetneq G$, where in particular $\partial_i(D,D)\neq \emptyset$. Recall that a modular representation is projective if and only if its vertex is the trivial group, hence we have $\fgproj (\kk G)\subseteq \fgmod(G;\partial_i(D,D))$, and similarly for~$H$. Thus the Green equivalence and correspondence for the Mackey 2-functors \eqref{it:mod} and \eqref{it:stmod} are identical. Those for~\eqref{it:der}, on the other hand, are interesting in their own right since there are many indecomposable complexes which are not merely (shifted) representations. Compare~\cite{CarlsonWangZhang20pp}.
\end{Rem}

\begin{Rem}
In \Cref{Exa:Green-corr-modrep} one may equally well take an arbitrary coefficient ring~$\kk$, at the risk of possibly losing the Krull-Schmidt property.
\end{Rem}

\begin{Rem}
The Green equivalence can also be applied to the `big' versions of the Mackey 2-functors of \Cref{Exa:Green-corr-modrep}:
\begin{enumerate} [\rm(a)]
\item
\label{it:Mod}%
$\MM(G)= \Mod(\kk G)$, the category of all (also infinitely-dimensional) $\kk G$-modules.
\item
\label{it:Stmod}%
$\MM(G)= \StMod(\kk G):= \frac{\Mod(\kk G)}{\Proj(\kk G)}$, the stable category of all $\kk G$-modules.
\item
\label{it:Der}%
$\MM(G)= \Der(\kk G)$, the derived category of all (also unbounded) complexes.
\end{enumerate}
Further variations are possible, for instance by considering the \emph{homotopy} category, or \emph{bounded} chain complexes, etc., as long as they still form a Mackey 2-functor.

This recovers the main results of \cite{BensonWheeler01} and \cite{CarlsonWangZhang20pp}.
\end{Rem}

\begin{Exa}[Stable homotopy]
\label{Exa:top}%
As in \cite[Example 4.3.8]{BalmerDellAmbrogio18pp}, we can consider the Mackey 2-functor where $\MM(G)= \SH(G)$ is the stable homotopy category of genuine $G$-equivariant spectra. Alternatively, as in \cite[Example 4.1.6]{BalmerDellAmbrogio18pp}, there is a Mackey 2-functor where $\MM(G)$ is the homotopy category of $G$-diagrams in spectra, a.k.a.\ the `very na\"ive' $G$-spectra.
Neither of these examples is Krull-Schmidt, not even if we restrict attention to compact objects (\ie finite spectra); indeed the `plain' category of finite spectra $\SH(1)^c=\Ho(\mathcal Sp^{f})$ is not Krull-Schmidt by \cite[Curiosity~6.3]{Freyd66}. (Note that the categories of socalled `na\"ive' equivariant spectra do not form a Mackey 2-functor, as they do not satisfy ambidexterity (Mack~4).)
\end{Exa}

\begin{proof}[Proof of \Cref{Cor:intro-top}]
Use \Cref{Thm:Green-equivalence} for the Mackey 2-functor $\MM(-)=\SH(-)$ of \Cref{Exa:top} and use \Cref{Prop:idempotent-complete} to drop idempotent-completion.
\end{proof}

\begin{Exa}[Kasparov theory]
\label{Exa:KK}%
As in \cite[Example 4.3.9]{BalmerDellAmbrogio18pp}, there is a Mackey 2-functor whose value $\MM(G)=\KK(G)$ is $G$-equivariant KK-theory, a.k.a.\ the Kasparov category of separable $G$-C*-algebras. Variations are possible: We may consider equivariant $E$-theory instead (see \cite{GuentnerHigsonTrout00}), or the Mackey 2-functor formed by the subcategories of $G$-cell algebras in the sense of \cite{DellAmbrogio14}, or the subcategories of compact objects therein.
We do not know if the latter example is Krull-Schmidt, but  in analogy to \Cref{Exa:top} we rather expect not.
\end{Exa}

\begin{proof}[Proof of \Cref{Cor:intro-KK}]
Use \Cref{Thm:Green-equivalence} for the Mackey 2-functor $\MM(-)=\KK(-)$ of \Cref{Exa:KK} and use \Cref{Prop:idempotent-complete} to drop idempotent-completion.
\end{proof}

We conclude with a family of geometric examples including Krull-Schmidt cases, which provide us with novel instances of the Green correspondence:

\begin{Exa}[Equivariant sheaves] \label{Exa:sheaves}
As in \cite[Section~4.4]{BalmerDellAmbrogio18pp}, we may consider Mackey 2-functors for a \emph{fixed} group~$G_0$ arising from a category equipped with a $G_0$-action. For instance, we may look at the category of sheaves of modules over a locally ringed space~$X$ equipped with an action of the group~$G_0$. This yields a Mackey 2-functor $\MM$ for~$G_0$ whose value at a subgroup $G\leq G_0$ is the category $\MM(G)=\Mod(X/\!\!/G)$ of $G$-equivariant sheaves of $\mathcal O_X$-modules.
If $X$ is a noetherian scheme, we may also consider Mackey 2-functors with value at~$G$ the category $\Qcoh(X/\!\!/G)$ of quasi-coherent sheaves, or $\coh(X/\!\!/G)$ of coherent sheaves, or their (bounded) derived categories $\Der(\Qcoh(X/\!\!/G))$ or~$\Db(\coh(X/\!\!/G))$. If $X$ is a regular and proper variety over a field~$\kk$, then the latter Mackey 2-functor is also Krull-Schmidt. Indeed, like $\coh(X)$, these categories are $\kk$-linear and idempotent-complete, and are Hom-finite by \cite{Atiyah56} or~\cite[Corollaire~2.5]{SGA6}.

As easily verified by an explicit computation, all of these examples are cohomological in the sense of \Cref{Rem:cohomological}; in particular, the Green correspondence for the latter example should be considered for $\mathrm{char}(\kk)=p>0$.
\end{Exa}

\begin{proof}[Proof of \Cref{Cor:intro-geom}]
Use \Cref{Thm:Green-equivalence} for the Mackey 2-functor of \Cref{Exa:sheaves}, $\MM(-)=\Db(\coh(X/\!\!/-)$, and use \Cref{Rem:KS-quotients} to drop idempotent-completion.
\end{proof}

\begin{Rem}
\label{Rem:sheaves}%
The Mackey 2-functors of \Cref{Exa:sheaves} in the special case of $X=\mathrm{Spec}(\kk)$ equipped with the trivial $G_0$-action simply recover the ones of \Cref{Exa:Green-corr-modrep} (or more precisely their $G_0$-local version, \ie their restriction along the forgetful 2-functor $\gpdGzero \to \gpd$).
\end{Rem}



\end{document}